\documentclass{article}
\usepackage{lineno,hyperref}
\modulolinenumbers[5]



\usepackage{authblk}
\usepackage{graphicx}
\usepackage{amsmath, amsfonts, amsthm, amssymb, bm}
\usepackage{color}
\usepackage{tabularx,hhline}
\usepackage{multirow}
\usepackage{booktabs}
\usepackage{fullpage}[2cm]
\usepackage{algorithm}
\usepackage{algorithmic}
\usepackage{enumerate}
\usepackage[margin = 1.5cm, font = small, labelfont = bf, labelsep = period]{caption}
\usepackage[font = small, labelfont = bf, labelsep = period]{caption}

\linespread{1.5}
\newcommand{\subjectto}{\textnormal{subject to}}
\newcommand{\st}{\textnormal{s.t.}}
\newcommand{\maximize}{\textnormal{maximize}}
\newcommand{\minimize}{\textnormal{minimize}}

\DeclareMathOperator*{\argmin}{arg\,min}
\DeclareMathOperator*{\diag}{diag}

\newcommand{\eg}{\textit{e.g.}}
\newcommand{\ie}{\textit{i.e.}}
\newcommand{\PP}{\mathbb{P}}
\newcommand{\QQ}{\mathbb{Q}}
\newcommand{\EE}{\mathbb{E}}
\newcommand{\RR}{\mathbb{R}}



\DeclareMathAlphabet{\mathscr}{U}{dutchcal}{m}{n}
\SetMathAlphabet{\mathscr}{bold}{U}{dutchcal}{b}{n}
\DeclareMathAlphabet{\mathbscr} {U}{dutchcal}{b}{n}

\newcommand{\Tb}{{\bm{\mathcal T}}}
\newcommand{\hb}{{\bm{\mathscr h}}}
\newcommand{\Qb}{{\bm{\mathcal Q}}}
\newcommand{\Wb}{{\bm{\mathcal W}}}
\newcommand{\rb}{{\bm{\mathscr q}}}
\newcommand{\rbl}{{\mathscr q}}
\newcommand{\thetab}{\bm{\pi}}

\newcommand{\Zover}{\overline{\mathcal Z}}
\newcommand{\Zunder}{\underline{\mathcal Z}}
\newcommand{\Phat}{\hat{\mathcal P}}
\newcommand{\PPhat}{\hat{\mathbb P}}
\newcommand{\xih}{\hat{\bm \xi}}
\newcommand{\xit}{\tilde{\bm \xi}}

\newcommand{\tr}{\textup{tr}}

\newtheorem{thm}{Theorem}
\newtheorem{prop}{Proposition}
\newtheorem{lem}{Lemma}

\newtheorem{coro}{Corollary}
\newtheorem{ex}{Example}
\newtheorem{rem}{Remark}
\newtheorem{defi}{Definition}

\begin{document}

\title{Conic Programming Reformulations of Two-Stage Distributionally Robust Linear Programs over Wasserstein Balls}

\author[1]{Grani A. Hanasusanto}
\author[2]{Daniel Kuhn}

\affil[1]{\small Graduate Program in Operations Research and Industrial Engineering, The University of Texas at Austin, USA}
\affil[2]{\small Risk Analytics and Optimization Chair, {\'E}cole Polytechnique F{\'e}d{\'e}rale de Lausanne, Switzerland}

\maketitle
%

\begin{abstract}
	Adaptive robust optimization problems are usually solved {\em approximately} by restricting the adaptive decisions to simple parametric decision rules. However, the corresponding approximation error can be substantial. In this paper we show that two-stage robust and distributionally robust linear programs can often be reformulated {\em exactly} as conic programs that scale polynomially with the problem dimensions. Specifically, when the ambiguity set constitutes a 2-Wasserstein ball centered at a discrete distribution, then the distributionally robust linear program is equivalent to a copositive program (if the problem has complete recourse) or can be approximated arbitrarily closely by a sequence of copositive programs (if the problem has sufficiently expensive recourse). These results directly extend to the classical robust setting and motivate strong tractable approximations of two-stage problems based on semidefinite approximations of the copositive cone. We also demonstrate that the two-stage distributionally robust optimization problem is equivalent to a tractable linear program when the ambiguity set constitutes a 1-Wasserstein ball centered at a discrete distribution and there are no support constraints.
\end{abstract}

\noindent \textbf{Keywords:} two-stage  decision problems, distributionally robust optimization, copositive programming

\section{Introduction}\label{sec:introduction}

In two-stage optimization under uncertainty an agent selects a here-and-now decision before observing the realization of some decision-relevant random vector. Once the uncertainty has been revealed, a wait-and-see decision is taken in order to correct any undesired effects of the here-and-now decision in the realized scenario. Classical stochastic programming seeks a single here-and-now decision and a family of (possibly infinitely many) wait-and-see decisions---one for each possible uncertainty realization---with the goal to minimize the sum of a deterministic here-and-now cost and the expectation of an uncertain wait-and-see cost~\cite{shapiro2014lectures}. Classical robust optimization, in contrast, seeks decisions that minimize the worst case of the total cost across all possible uncertainty realizations~\cite{ben2009robust}. While stochastic programming assumes full knowledge of the distribution governing the uncertain problem parameters, which is needed to evaluate the expectation of the total costs, robust optimization denies (or ignores) any knowledge of this distribution except for its support.

Distributionally robust optimization is an alternative modeling paradigm pioneered in \cite{dupacova:66, Scarf:58, SK02:minimax_analysis}. It has gained new thrust over the last decade and challenges the black-and-white view of stochastic and robust optimization. Specifically, it assumes that the decision maker has access to some limited probabilistic information ({\em e.g.}, in the form of the distribution's moments, its structural properties or its distance to a reference distribution); but not enough to pin down the true distribution precisely. In this setting, a meaningful objective is to minimize the worst-case expected total cost, where the worst case is evaluated across an ambiguity set that contains all distributions consistent with the available probabilistic information. Distributionally robust models enjoy strong theoretical justification from decision theory \cite{GS89:maxmin_exp_utility}, and there is growing evidence that they provide high-quality decisions at a moderate computational cost \cite{DY10:DRO, goh2010distributionally, WKS13:drco}.

Two-stage decision problems under uncertainty---whether stochastic, robust or distributionally robust---typically involve a continuum of wait-and-see decisions and thus constitute infinite-dimensional functional optimization problems. Therefore, they can only be solved approximately, except in contrived circumstances. The existing approximation methods can roughly be subdivided into {\em discretization schemes} \cite{hadjiyiannis2011scenario, kleywegt2002sample, shapiro2003monte} and {\em decision rule methods} \cite{BGGN04:LDR, georghiou2015generalized, goh2010distributionally}. Discretization schemes approximate the support of the uncertain parameters with a finite subset, which entails a relaxation of the original problem and encourages optimistically biased solutions. Decision rule methods, on the other hand, approximate the infinite-dimensional space of all wait-and-see decisions with a finite-dimensional subspace of linearly parameterized decision rules, which entails a restriction of the original problem and leads to pessimistically biased solutions. In this paper, we introduce a new method for approximating two-stage distributionally robust linear programs, which can neither be classified as a discretization scheme nor as a decision rule method: We first reformulate the original infinite-dimensional optimization problem as an equivalent finite-dimensional conic program of polynomial size, which absorbs all the complexity in its cones, and then replace the cones with tractable inner approximations. 

Our exposition focuses on distributionally robust linear programs whose ambiguity sets contain {\color{black} all discrete and continuous distributions} supported on a polytope that have a Wasserstein distance of at most~$\epsilon$ from a discrete reference distribution {\color{black} (such as the empirical distribution corresponding to finitely many samples from the unknown true distribution)}. This problem class encapsulates the two-stage stochastic linear programs with discrete distributions (for~$\epsilon=0$) and the two-stage robust optimization problems with {\color{black} bounded} polyhedral uncertainty sets (for~$\epsilon=\infty$) as special cases. Wasserstein ambiguity sets have first been used in the context of portfolio optimization~\cite{pflug2007ambiguity}. The corresponding distributionally robust optimization models were initially perceived as difficult and thus tackled with methods from global optimization~\cite{wozabal2012framework}; see also~\cite[Chapter~7]{pflug2014multistage}. Recently it has been discovered, however, that distributionally robust optimization problems with Wasserstein ambiguity sets can often be reformulated as finite convex programs~\cite{MEK15:Wasserstein, ZG15:Wasserstein}. Single-stage problems with piecewise linear cost functions, for instance, are tractable and admit convex reformulations of polynomial sizes~\cite{MEK15:Wasserstein}. Two-stage problems, on the other hand, are generically NP-hard. Their convex reformulations have exponential size but are amenable to Benders-type decomposition algorithms~\cite{ZG15:Wasserstein}. Alternatively, two-stage problems can be converted to single-stage problems via a decision rule approximation, in which case they admit again a convex reformulation of polynomial size and thus regain tractability~\cite{GK16:Wasserstein}.

This paper extends the state-of-the-art in two-stage distributionally robust linear programming along several dimensions. We highlight the following main contributions:
\begin{enumerate}
	\item[(i)] We prove that any two-stage distributionally robust linear program with {\em complete recourse} is equivalent to a copositive program of polynomial size if the ambiguity set constitutes a 2-Wasserstein ball centered at a discrete distribution. 
	\item[(ii)] We prove that any two-stage distributionally robust linear program with {\em sufficiently expensive recourse} can be approximated arbitrarily closely by a sequence of copositive programs of a fixed polynomial size if the ambiguity set constitutes a 2-Wasserstein ball centered at a discrete distribution. 
	\item[(iii)] By using nested hierarchies of tractable convex cones to approximate the (intractable) copositive cones from the inside~\cite{bomze2002solving, DKP02:copositive, parrilo2000structured}, we obtain sequences of tractable conservative approximations for the two-stage distributionally robust linear programs described in (i) and (ii). These approximations can be made arbitrarily accurate. However, numerical tests suggest that even the coarsest of these approximations distinctly outperform the state-of-the-art decision rule approximations in terms of accuracy.
	\item[(iv)] We prove that any two-stage distributionally robust linear program with {\em fixed costs} is equivalent to a tractable linear program if the ambiguity set constitutes a 1-Wasserstein ball centered at a discrete distribution and if there are no support constraints. 
	We also show that this tractability result is sharp. 
	\item[(v)] We demonstrate that all of the above results carry directly over to classical two-stage robust optimization problems with {\color{black} bounded} polyhedral uncertainty sets. To our best knowledge, we provide the first (polynomially-sized) conic programming reformulations for generic problem instances in this class. 
\end{enumerate}


Two-stage distributionally robust linear programs with objective uncertainty are studied in~\cite{BDNT10:Models_Minimax}. Assuming that only the first- and second-order moments of the uncertain cost coefficients are known, these problems can be reformulated as tractable semidefinite programs. In the presence of constraint uncertainty, however, these problems become intractable. Two-stage distributionally robust binary programs with polyhedral moment information are studied in \cite{hanasusanto2016k}. If only the cost coefficients are uncertain, these problems can be reformulated as explicit mixed-integer linear programs of polynomial sizes. While two-stage distributionally robust optimization endeavors to minimize the worst-case (maximal) expected wait-and-see cost, a parallel stream of research investigates the best-case (minimal) expectations of the minima of mixed zero-one linear programs with objective uncertainty. Under first- and second-order moment information, any such best-case expectation can be reformulated as the optimal value of a completely positive program~\cite{NRZ11:mixed01}. In fact, this best-case expectation even reduces to the optimal value of a tractable semidefinite program whenever the convex hull of all rank-1 outer products of feasible wait-and-see decisions with themselves is semidefinite-representable~\cite{natarajan2016reduced}. These deep theoretical results have recently opened up new avenues for modeling and solving stochastic appointment scheduling problems~\cite{kong2013scheduling} and have also ramifications for computing best-worst choice probabilities in discrete choice models~\cite{natarajan2016reduced}.  A comprehensive survey of recent results at the interface of distributionally robust optimization and completely positive programming is provided in~\cite{li2014distributionally}.

In contrast to the existing literature, here we develop copositive programming reformulations for generic two-stage distributionally robust linear programs where both the objective function and the constraints may be affected by the uncertainty. We also present new linear programming reformulations for two-stage distributionally robust linear programs where the uncertainty affects only the constraints. {\color{black} These exact reformulations are reminiscent of the conservative approximation models for two-stage robust optimization models derived in~\cite{ardestani2016linearized} by leveraging popular reformulation-linearization techniques from bilinear programming. Another main difference to the existing literature is our focus on Wasserstein balls instead of moment ambiguity sets to capture distributional uncertainty.} This has the advantage that the degree of ambiguity aversion can be controlled by tuning the radius of the Wasserstein ball.

A key benefit of Wasserstein balls is that they provide natural confidence sets for the unknown distribution of the uncertain problem parameters. Specifically, the Wasserstein ball around the empirical distribution on $I$ independent historical samples contains the unknown true distribution with confidence $1-\beta$ if its radius exceeds an explicit threshold $\epsilon_I(\beta)$ that is known in closed form~\cite{MEK15:Wasserstein, ZG15:Wasserstein}. Thus, the corresponding distributionally robust optimization problem offers a $1-\beta$ upper confidence bound on the optimal value of the true stochastic program. One can also show that this data-driven distributionally robust optimization problem converges to the corresponding true stochastic program as the sample size $I$ tends to infinity~\cite{MEK15:Wasserstein, ZG15:Wasserstein}. Other data-driven distributionally robust optimization models that offer finite sample and asymptotic guarantees are discussed in~\cite{BGL2014:robust_SAA} based on goodness-of-fit ambiguity sets, in~\cite{jiang2015risk} based on $L^1$-norm ball ambiguity sets and in~\cite{love2015phi} based on $\Phi$-divergence ambiguity sets.

{\color{black} While this paper was under review, we became aware of the paper \cite{xu2016copositive} by Xu and Burer, which was submitted simultaneously. It turns out that our Corollary~\ref{thm:robust} is equivalent to Theorem~1 in \cite{xu2016copositive}, and so we mention it here for the reader�s reference. While \cite{xu2016copositive} focuses on two-stage {\em robust} linear programs with right hand side uncertainty, we develop copositive programming reformulations for {\em distributionally robust} two-stage linear programs with objective {\em and} constraint uncertainty.}



The rest of the paper is structured as follows. Section~\ref{sect:problem-formulation} provides a formal problem statement and reviews some fundamental results from~\cite{MEK15:Wasserstein,ZG15:Wasserstein}. In Section~\ref{sec:CP} we derive copositive programming reformulations for two-stage distributionally robust linear programs over 2-Wasserstein balls and discuss tractable approximations. Exact tractable linear programming reformulations for two-stage distributionally robust linear programs over 1-Wasserstein balls are described in Section~\ref{sec:tractability}. Section~\ref{sec:numerical} reports on numerical results.

\paragraph{Notation:} For any $I\in\mathbb N$, we define $[I]$ as the index set $\{1,\dots,I\}$. We denote by $\mathbb I$ the identity matrix and by $\mathbf e$ the vector of all ones. Their dimensions will be clear from the context. The trace of a square matrix $\bm M$ is denoted as $\tr(\bm M)$. We define $\diag(\bm v)$ as the diagonal matrix with the vector $\bm v$ on its main diagonal.  The set of  non-negative (positive) reals is denoted as $\mathbb R_+$ ($\mathbb R_{++}$).
The set of all symmetric matrices in $\RR^{K\times K}$ is denoted as $\mathbb S^K$, while the cone of positive semidefinite matrices in $\RR^{K\times K}$ is denoted as $\mathbb S_+^K$. We define the cone of copositive matrices as $\mathcal C=\{\bm M\in\mathbb S^K:\bm{\xi}^\top\bm{M}\bm{\xi}\geq 0\;\forall\bm{\xi}\geq \bm 0\}$ and the cone of completely positive matrices as $\mathcal C^*=\{\bm{M}\in\mathbb S^K:\bm M=\bm B\bm B^\top \text{ for some } \bm B\in\RR_+^{K\times g(K)}\}$, where $g(K)= \max\{{K+1\choose 2}-4,K\}$ \cite{SBBJ15:CP-RANK}.
For any $\bm Q, \bm R\in\mathbb S^K$, the relations $\bm Q\succeq\bm R$, $\bm Q\succeq_{\mathcal C}\bm R$, and $\bm Q\succeq_{\mathcal {C}^*}\bm R$ mean that $\bm Q-\bm R$ is an element of $\mathbb S_+^K$, $\mathcal C$, and $\mathcal C^*$, respectively. {\color{black} We denote the $j$-th row ($j$-th column) of a matrix $\bm M$ as $\bm M_{j:}$ ($\bm M_{:j}$).} All random variables are designated by tilde signs (\eg, $\tilde{\bm\xi}$), while their realizations are denoted without tildes (\eg, $\bm{\xi}$).  The characteristic function of a set $\mathcal S$ is defined as $\chi_{\mathcal S}(\bm \xi)=0$ if $\bm \xi\in\mathcal S$; $=\infty$ otherwise.

\section{Problem Formulation}
\label{sect:problem-formulation}
We study two-stage distributionally robust linear programs of the form
\begin{equation}
\label{eq:DRSP}
\begin{array}{l@{\quad}l@{\quad}l}
\minimize  &\displaystyle\bm c^\top\bm x + \mathcal Z(\bm x)\\
\subjectto & \displaystyle \bm x\in\mathcal X,
\end{array}
\end{equation}
where $\mathcal X\subseteq\mathbb R^{N_1}$ is the feasible set of the here-and-now decisions, $\bm c^\top\bm x$ is the here-and-now cost, and $\mathcal Z(\bm x)$ is the worst-case expected wait-and-see cost. Formally, we set
\begin{equation}
	\label{eq:WCE}
	\mathcal Z(\bm x)=\sup_{\PP\in\Phat}\mathbb E_{\mathbb P}\left[Z(\bm x,\tilde{\bm\xi})\right],
\end{equation}
where $\xit\in\Xi\subseteq\mathbb R^K$ is a random vector comprising the uncertain problem parameters, and $\Phat$ is an ambiguity set that contains the possible distributions of $\xit$.
The recourse function $Z(\bm x,\bm\xi)$ in \eqref{eq:WCE} constitutes the optimal value of the recourse problem, that is, 
\begin{equation}
\label{eq:Recourse}
\begin{array}{ccll}
Z(\bm x,\bm{\xi})=&\inf  &\displaystyle(\bm Q\bm\xi+\bm q)^\top \bm y\\
&\st & \displaystyle\bm y\in\mathbb R^{N_2}\\
&& \displaystyle\bm T(\bm x)\bm\xi+\bm h(\bm x)\leq \bm W\bm y,
\end{array}
\end{equation}
where $\bm T(\bm x)\in\RR^{M\times K}$ and $\bm h(\bm x)\in\RR^M$ are matrix- and vector-valued affine functions, respectively. 
The dual of the recourse problem is given by
\begin{equation}
\label{eq:Recourse_dual}
\begin{array}{lcll}
Z_{\mathrm d}(\bm x,\bm{\xi})=&\sup  &\displaystyle(\bm T(\bm x)\bm\xi+\bm h(\bm x))^\top \bm 
p\\
&\st & \displaystyle\bm p\in\mathbb R^{M}_+\\
&& \bm Q\bm\xi+\bm q=\bm W^\top \bm p.
\end{array}
\end{equation}

	We introduce the following standard terminology that will be used throughout the paper. 
\begin{defi}[Complete Recourse]
	\label{defi:complete_recourse}
	We say that the two-stage distributionally robust linear program \eqref{eq:DRSP} has complete recourse if there exists $\bm y^+\in\RR^{N_2}$ with $\bm W\bm y^+>\bm 0$. 
\end{defi}
Complete recourse implies that problem \eqref{eq:Recourse} is feasible for every $\bm x\in\RR^{N_1}$ and $\bm\xi\in\RR^K$. Indeed, it implies that there is always a $\lambda>0$ such that $\bm y=\lambda\bm y^+$ exceeds $\bm T(\bm x)\bm\xi+\bm h(\bm x)$. 
\begin{defi}[Sufficiently Expensive Recourse]
	\label{as:dual_complete}
	We say that the two-stage distributionally robust linear program \eqref{eq:DRSP} has sufficiently expensive recourse if for any fixed $\bm\xi\in\Xi$ the dual problem \eqref{eq:Recourse_dual} is feasible. 
\end{defi}
If problem \eqref{eq:DRSP} has complete recourse, then $Z(\bm x,\bm\xi)<+\infty$ for every $\bm x\in\RR^{N_1}$ and $\bm\xi\in\RR^K$. On the other hand, if problem \eqref{eq:DRSP} has sufficiently expensive recourse, then $Z(\bm x,\bm\xi)>-\infty$ for every $\bm x\in\mathcal X$ and $\bm\xi\in\Xi$. If both conditions are satisfied, then $Z(\bm x,\bm\xi)$ is finite. Each condition on itself implies that  strong duality holds between the  primal and dual linear programs \eqref{eq:Recourse} and~\eqref{eq:Recourse_dual}, respectively. Throughout this paper, we will always assume that problem \eqref{eq:DRSP} has sufficiently expensive recourse. This is a weak condition that is even satisfied by many problems with induced constraints. The complete recourse assumption, which rules out induced constraints, will only be imposed occasionally to obtain stronger results. 


Following \cite{MEK15:Wasserstein,ZG15:Wasserstein}, we assume henceforth that the true distribution of $\xit$ is unknown but that we have access to $I$ samples $\xih_1,\dots,\xih_I$ from this distribution. In this case, we can define the empirical distribution $\PPhat_I=\frac{1}{I}\sum_{i\in[I]}\delta_{\xih_i}$, that is, the uniform distribution on the samples. The ambiguity set $\hat{\mathcal P}$ in \eqref{eq:DRSP} can then be defined as the family of all distributions that are close to the empirical distribution $\hat{\mathbb P}_I$ with respect to the Wasserstein metric. 
\begin{defi}[Wasserstein Metric]
	For any $r\geq 1$, let $\mathcal M^r(\Xi)$ be the set of all probability distributions $\mathbb P$ supported on $\Xi$ satisfying $\mathbb E_{\mathbb P}[d(\tilde{\bm\xi},\bm \xi_0)^r]=\int_\Xi d(\bm\xi,\bm \xi_0)^r~\mathbb P(\rm{d}\bm\xi)<\infty$ where $\bm{\xi}_0\in\Xi$ is some reference point, and $d({\bm\xi},\bm \xi_0)$ is a continuous reference metric on $\Xi$. For any $r\geq 1$, the $r$-Wasserstein distance between two distributions $\mathbb P_1,\mathbb P_2\in\mathcal M^r(\Xi)$ is defined as
	\begin{equation*}
	W^r(\mathbb P_1,\mathbb P_2)=\inf\left\{\left(\int_{\Xi^2}  d(\bm\xi_1,\bm\xi_2)^r~\QQ(\textup{d}\bm\xi_1,\textup{d}\bm\xi_2)\right)^{\frac{1}{r}}~:~
	\begin{array}{l}\QQ \text{ is a joint distribution of }\bm\xi_1\text{ and }\bm\xi_2\\ \text{with marginals $\mathbb P_1$ and $\mathbb P_2$, respectively}\end{array}\right\}.
	\end{equation*}
\end{defi}
We denote the Wasserstein ball of radius $\epsilon$ centered at the empirical distribution by
\begin{equation*}
\mathcal B_\epsilon^{r}(\hat{\mathbb P}_I)=\left\{\mathbb P\in\mathcal M^r(\Xi)~:~W^r(\mathbb P,\hat{\mathbb P}_I)\leq \epsilon\right\}.
\end{equation*}
The following theorem, which is adapted from  \cite{MEK15:Wasserstein,GK16:Wasserstein} {\color{black} and relies on the Knothe-Rosenblatt rearrangement~\cite{villani2008optimal}}, establishes that the worst-case expectation \eqref{eq:WCE} over a Wasserstein ambiguity set $\Phat=\mathcal B_\epsilon^{r}(\hat{\mathbb P}_I)$ can be reformulated in terms of a generalized moment problem and the corresponding dual robust optimization problem. To keep this paper self-contained, we prove this theorem in the appendix. 
\begin{thm}
		\label{thm:moment_problem}
If $\hat{\mathcal P}=\mathcal B_\epsilon^{r}(\hat{\mathbb P}_I)$, the worst-case expectation \eqref{eq:WCE} coincides with the optimal value of the generalized moment problem 
\begin{equation}
	\label{eq:primal_moment}
	\begin{array}{lc@{\quad}l@{\quad}l}
		\mathcal Z(\bm x)=&\displaystyle\sup&\displaystyle\frac{1}{I}\sum_{i\in[I]}\int_{\Xi} Z(\bm x,\bm\xi)\;\mathbb P_i(\textup{d}\bm\xi)\\
		&\displaystyle\st&\displaystyle\mathbb P_i\in\mathcal M^r(\Xi)\qquad\forall i\in[I]\\
		&&\displaystyle\frac{1}{I}\sum_{i\in[I]}\int_{\Xi}{d}(\bm\xi,\hat{\bm\xi}_i)^r\;\mathbb P_i(\textup{d}\bm\xi)\leq\epsilon^r.
	\end{array}
\end{equation}
Furthermore, for $\epsilon>0$ this problem admits the strong dual robust optimization problem 
\begin{equation}
	\label{eq:dual_semiinf}
	\begin{array}{lc@{\quad}l@{\quad}l}
		\mathcal Z(\bm x)=&\displaystyle\inf_{\lambda\in\mathbb R_+}&\displaystyle\epsilon^r\lambda+\frac{1}{I}\sum_{i\in[I]}\sup_{\bm\xi\in\Xi} Z(\bm x,\bm\xi)-\lambda{d}(\bm\xi,\xih_i)^r.
	\end{array}
\end{equation}
\end{thm}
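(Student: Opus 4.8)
The plan is to prove the two assertions in turn. For the identity between $\mathcal Z(\bm x)$ and the value of the generalized moment problem \eqref{eq:primal_moment}, I would exploit the coupling (transport-plan) characterization of the Wasserstein distance: a distribution $\mathbb P$ lies in $\mathcal B_\epsilon^r(\hat{\mathbb P}_I)$ exactly when there is a joint law $\mathbb Q$ of $(\bm\xi_1,\bm\xi_2)$ with first marginal $\mathbb P$, second marginal $\hat{\mathbb P}_I$, and transport cost $\int d(\bm\xi_1,\bm\xi_2)^r\,\mathbb Q(\mathrm d\bm\xi_1,\mathrm d\bm\xi_2)\le\epsilon^r$; securing such an \emph{optimal} coupling, so that the infimum defining $W^r$ is attained, is where the Knothe--Rosenblatt rearrangement enters. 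Because the second marginal $\hat{\mathbb P}_I=\tfrac1I\sum_{i\in[I]}\delta_{\hat{\bm\xi}_i}$ is discrete, I would then disintegrate $\mathbb Q$ with respect to it, writing $\mathbb Q=\tfrac1I\sum_{i\in[I]}\mathbb P_i\otimes\delta_{\hat{\bm\xi}_i}$ with $\mathbb P_i\in\mathcal M^r(\Xi)$ the conditional law of $\bm\xi_1$ given $\bm\xi_2=\hat{\bm\xi}_i$. Under this correspondence the objective $\mathbb E_{\mathbb P}[Z(\bm x,\tilde{\bm\xi})]$ becomes $\tfrac1I\sum_{i\in[I]}\int_\Xi Z(\bm x,\bm\xi)\,\mathbb P_i(\mathrm d\bm\xi)$ and the transport budget becomes the budget of \eqref{eq:primal_moment}. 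Checking both inclusions---any $\mathbb P$ in the ball yields a feasible family $\{\mathbb P_i\}$, and conversely any feasible family induces $\mathbb P=\tfrac1I\sum_{i\in[I]}\mathbb P_i$ together with the product coupling above---then gives the claimed equality of optimal values.

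For the strong dual \eqref{eq:dual_semiinf}, I would regard \eqref{eq:primal_moment} as a linear maximization over the measures $(\mathbb P_i)_{i\in[I]}$ subject to the single scalar budget constraint and attach to it a multiplier $\lambda\ge 0$. The Lagrangian $\lambda\epsilon^r+\tfrac1I\sum_{i\in[I]}\int_\Xi\bigl(Z(\bm x,\bm\xi)-\lambda\,d(\bm\xi,\hat{\bm\xi}_i)^r\bigr)\,\mathbb P_i(\mathrm d\bm\xi)$ separates across $i$, and since maximizing a linear functional over the probability measures in $\mathcal M^r(\Xi)$ is attained in the limit by Dirac masses,
\begin{equation*}
\sup_{\mathbb P_i\in\mathcal M^r(\Xi)}\int_\Xi\bigl(Z(\bm x,\bm\xi)-\lambda\,d(\bm\xi,\hat{\bm\xi}_i)^r\bigr)\,\mathbb P_i(\mathrm d\bm\xi)=\sup_{\bm\xi\in\Xi}\bigl(Z(\bm x,\bm\xi)-\lambda\,d(\bm\xi,\hat{\bm\xi}_i)^r\bigr).
\end{equation*}
Substituting this back and minimizing over $\lambda\ge 0$ reproduces \eqref{eq:dual_semiinf}. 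Weak duality is automatic, so the real content is the absence of a \emph{duality gap}: here the hypothesis $\epsilon>0$ acts as a Slater condition, since $\mathbb P_i=\delta_{\hat{\bm\xi}_i}$ is strictly feasible with transport cost $0<\epsilon^r$, and I would invoke a strong-duality theorem for generalized moment problems whose one regularity requirement is exactly this strict feasibility.

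The main obstacle I anticipate is the measure-theoretic and duality-theoretic bookkeeping at the two reductions rather than the algebra. In the first step one must guarantee that the Wasserstein infimum is attained by a coupling (the role of the Knothe--Rosenblatt construction) so that a distribution in the ball genuinely disintegrates into a feasible family, and one needs measurability of $Z(\bm x,\cdot)$---which holds because it is a parametric linear-program value function---for the integrals to make sense. In the second step one must verify the hypotheses of the abstract moment-duality theorem: that the per-sample normalization of the $\mathbb P_i$ is correctly absorbed by the pointwise inner supremum rather than demanding additional multipliers, and that $\lambda$ may be confined to $\mathbb R_+$. Confirming that $\epsilon>0$ alone certifies a zero gap---whereas at $\epsilon=0$ the budget forces $\mathbb P_i=\delta_{\hat{\bm\xi}_i}$, strict feasibility is lost, and the dual degenerates---is the delicate endpoint that explains why \eqref{eq:dual_semiinf} is claimed only for $\epsilon>0$.
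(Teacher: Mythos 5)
Your proposal follows essentially the same route as the paper: the first claim is obtained by disintegrating the transport plan with respect to the discrete marginal $\hat{\mathbb P}_I$ (with the Knothe--Rosenblatt rearrangement securing an attaining coupling), and the second by Lagrangian/moment duality, with $\epsilon>0$ supplying the Slater-type condition. The only point the paper treats that you do not is the degenerate case where $Z(\bm x,\overline{\bm\xi})=+\infty$ for some $\overline{\bm\xi}\in\Xi$ (possible in the absence of complete recourse), in which the cited strong-duality theorem does not apply directly and one instead verifies that both \eqref{eq:primal_moment} and \eqref{eq:dual_semiinf} equal $+\infty$, e.g.\ by exhibiting the feasible mixture $(1-\tau)\hat{\mathbb P}_I+\tau\delta_{\overline{\bm\xi}}$.
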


All results of this paper directly extend to the class of two-stage {\em robust} optimization problems, which model the uncertainties only through their uncertainty set $\Xi$. 
\begin{rem}[Two-Stage Robust Optimization]
\label{rem:robust}
If $\Xi$ is compact and the radius $\epsilon$ of the Wassersein ball is larger than the diameter of $\Xi$, then the two-stage distributionally robust linear program \eqref{eq:DRSP} simplifies to the two-stage robust optimization problem 
\begin{equation}
\label{eq:robust}
	\begin{array}{l@{\quad}l@{\quad}l}
	\minimize  &\displaystyle\bm c^\top\bm x + \max_{\bm\xi\in\Xi} Z(\bm x,\bm\xi)\\
	\subjectto & \displaystyle \bm x\in\mathcal X.
	\end{array}
\end{equation} 
Indeed, if we set $\epsilon\geq\max_{\bm\xi,\bm\xi'\in\Xi}d(\bm\xi,\bm\xi')$, then the Wasserstein ball  $\mathcal B_\epsilon^{r}(\hat{\mathbb P}_I)$ contains all Dirac distributions $\delta_{\bm\xi}$, $\bm\xi\in\Xi$. This implies that the worst-case expected cost \eqref{eq:WCE} reduces to the worst-case cost $\max_{\bm\xi\in\Xi} Z(\bm x,\bm\xi)$, irrespective of the number and positions of the samples $\xih_i$, $i\in[I]$. 
\end{rem}

\section{Copositive Programming Reformulation}
\label{sec:CP}
Throughout this section we work with the 2-Wasserstein metric  and the 2-norm reference distance $d(\bm{\xi}_1,\bm\xi_2)=\|\bm{\xi}_1-\bm{\xi}_2\|_2$. We further assume that the support set $\Xi$ is a non-empty polyhedron of the form
\begin{equation}
	\label{eq:support}
	\Xi=\left\{\bm\xi\in\mathbb R_+^K~:~\bm S\bm\xi\leq\bm t\right\}
\end{equation}
for some $\bm S\in\RR^{J\times K}$ and $\bm t\in\RR^J$. 
Note that we assume without loss of generality that $\Xi$ is a {\color{black} (possibly unbounded)} subset of the non-negative orthant. Finally, we also assume that problem \eqref{eq:DRSP} has sufficiently expensive recourse. 
We will show that under these assumptions the two-stage distributionally robust linear program \eqref{eq:DRSP} admits an equivalent reformulation as a copositive program. 

\subsection{A Copositive Upper Bound on \texorpdfstring{$\mathcal Z(\bm x)$}{}}
To derive a copositive programming-based upper bound on $\mathcal Z(\bm x)$, we will need the following technical lemma. 
\begin{lem}
	\label{lem:copositive_semiinf_equiv}
	For any symmetric matrix $\bm M\in\mathbb S^K$, we have $\bm M\succeq_{\mathcal C}\bm 0$ if and only if 
	\begin{equation}
		\label{eq:copositive_equiv}
		\left[\bm z^\top\;1\right]\bm M\left[\bm z^\top\;1\right]^\top\geq 0\quad\forall \bm z\in\mathbb R_+^{K-1}.
	\end{equation}
\end{lem}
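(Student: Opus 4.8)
The plan is to establish the two implications separately. The ``only if'' direction is immediate: for any $\bm z \in \mathbb{R}_+^{K-1}$, the augmented vector $[\bm z^\top\;1]^\top$ belongs to $\mathbb{R}_+^K$, so copositivity of $\bm M$ (that is, $\bm\xi^\top \bm M\bm\xi \geq 0$ for all $\bm\xi \geq \bm 0$) applied to this particular vector yields exactly~\eqref{eq:copositive_equiv}.

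For the ``if'' direction, I would take an arbitrary $\bm\xi \in \mathbb{R}_+^K$, write it as $\bm\xi = [\bm w^\top\; t]^\top$ with $\bm w \in \mathbb{R}_+^{K-1}$ and $t \geq 0$, and argue by cases on the final coordinate $t$. When $t > 0$, the key observation is that the quadratic form is homogeneous of degree two: since $\bm\xi = t\,[(\bm w/t)^\top\;1]^\top$ and $\bm w/t \in \mathbb{R}_+^{K-1}$, hypothesis~\eqref{eq:copositive_equiv} gives $\bm\xi^\top \bm M\bm\xi = t^2\,[(\bm w/t)^\top\;1]\,\bm M\,[(\bm w/t)^\top\;1]^\top \geq 0$. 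The case $t = 0$ is not reachable by this rescaling---it corresponds to the recession direction of the slice $\{t = 1\}$---and this is where I expect the (mild) crux of the argument to lie.

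To treat $t = 0$, I would recover $\bm w^\top \bm M_{11}\bm w \geq 0$, where $\bm M_{11}$ denotes the leading $(K-1)\times(K-1)$ principal block of $\bm M$, by a limiting argument. For fixed $\bm w \in \mathbb{R}_+^{K-1}$ and any scalar $\lambda > 0$ we have $\lambda \bm w \in \mathbb{R}_+^{K-1}$, so applying~\eqref{eq:copositive_equiv} with $\bm z = \lambda \bm w$ and expanding the block structure of $\bm M$ yields $\lambda^2\,\bm w^\top \bm M_{11}\bm w + 2\lambda\,\bm w^\top \bm m_{12} + m_{22} \geq 0$, where $\bm m_{12}$ and $m_{22}$ are the off-diagonal block and the corner entry of $\bm M$. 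Dividing by $\lambda^2$ and sending $\lambda \to \infty$ annihilates the two lower-order terms and leaves $\bm w^\top \bm M_{11}\bm w \geq 0$, which is precisely $\bm\xi^\top \bm M\bm\xi$ for $\bm\xi = [\bm w^\top\;0]^\top$. Combining both cases gives $\bm\xi^\top \bm M\bm\xi \geq 0$ for every $\bm\xi \in \mathbb{R}_+^K$, i.e., $\bm M \succeq_{\mathcal C}\bm 0$, completing the proof.
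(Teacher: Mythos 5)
Your proof is correct and follows essentially the same route as the paper: the forward direction by restriction to $\xi_K=1$, and the converse by exploiting homogeneity for $t>0$ and a limiting argument for $t=0$. Your $\lambda\to\infty$ limit along the ray $\bm z=\lambda\bm w$ is, after the substitution $t=1/\lambda$, exactly the paper's continuity argument sending $t\downarrow 0$ in the univariate quadratic $\left[\bm w^\top\;t\right]\bm M\left[\bm w^\top\;t\right]^\top$.
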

\begin{proof}
	To prove sufficiency, we recall that $\bm M\succeq_{\mathcal C}\bm 0$ if and only if $\bm\xi^\top\bm M\bm \xi\geq 0$ for all $\bm \xi\in\mathbb R_+^K$. Thus, \eqref{eq:copositive_equiv} follows by focusing on those $\bm{\xi}\in\mathbb R^K$ with $\xi_K=1$.
	
	To prove the converse implication, assume that \eqref{eq:copositive_equiv} holds. Hence, we have
	\begin{equation*}
		\begin{array}{cll}
			\left[\bm z^\top\;1\right]\bm M\left[\bm z^\top\;1\right]^\top\geq 0\quad\forall \bm z\in\mathbb R_+^{K-1}&\Longrightarrow&\left[t\bm z^\top\;t\right]\bm M\left[t\bm z^\top\;t\right]^\top\geq 0
			\quad\forall \bm z\in\mathbb R_+^{K-1}\;\forall t\in\mathbb R_{++}\\
			&\Longrightarrow&\left[\bm y^\top\;t\right]\bm M\left[\bm y^\top\;t\right]^\top\geq 0
			\quad\forall \bm y\in\mathbb R_+^{K-1}\;\forall t\in\mathbb R_{++},
		\end{array}
	\end{equation*}
	that is, for any fixed $\bm y\in\mathbb R_+^{K-1}$, the univariate quadratic function $\left[\bm y^\top\;t\right]\bm M\left[\bm y^\top\;t\right]^\top$ is non-negative for all $t>0$. As this function is continuous, it must in fact be non-negative for all $t\geq 0$. Thus, $\bm M\succeq_{\mathcal C} \bm 0$.
\end{proof}

{\color{black} We are now ready to derive an upper bound on the worst-case expectation~$\mathcal Z(\bm x)$. This bound is expressed as the optimal value of a copositive minimization problem and can thus be used to conservatively approximate~\eqref{eq:DRSP} with a finite-dimensional minimization problem that is principally amenable to numerical~solution.}

\begin{thm}[Copositive Upper Bound]
	\label{thm:CP}
	For any fixed first-stage decision $\bm x\in\mathcal X$, the worst-case expectation $\mathcal Z(\bm x)$ in \eqref{eq:WCE} is  bounded above by the optimal value of the copositive program
	\begin{equation}
		\label{eq:CP}
		\begin{array}{rc@{\quad}l@{\quad}l}
			&\Zover(\bm x)\\
			=&\displaystyle\inf_{} & \displaystyle\epsilon^2\lambda +\frac{1}{I}\sum_{i\in[I]}\left[s_i+\rb^\top\bm\psi_i-\lambda\|\xih_i\|_2^2+\sum_{j\in[N_2+J]}\phi_{ij}{\rbl}^2_j\right]\\
			&\st &\displaystyle \lambda\in\mathbb R_+,\; s_i\in\mathbb R,\;\bm\psi_i,\bm\phi_i\in\RR^{N_2+J}&\forall i\in[I]\\
			&&\displaystyle\begin{bmatrix}
				\displaystyle\lambda\mathbb I+\Qb^\top\diag(\bm\phi_i)\Qb&\displaystyle-\frac{1}{2}\Tb(\bm x)^\top-\Qb^\top\diag(\bm\phi_i)\Wb^\top&\displaystyle-\lambda\xih_i-\frac{1}{2}\Qb^\top\bm\psi_i\\
				\displaystyle-\frac{1}{2}\Tb(\bm x)-\Wb\diag(\bm\phi_i)\Qb&\displaystyle \Wb\diag(\bm\phi_i)\Wb^\top&\displaystyle \frac{1}{2}(\Wb\bm\psi_i-\hb(\bm x))\\\displaystyle
				(-\lambda\xih_i-\frac{1}{2}\bm Q^\top\bm\psi_i)^\top &\displaystyle\frac{1}{2}(\Wb\bm\psi_i-\hb(\bm x))^\top& \displaystyle s_i
			\end{bmatrix}\succeq_{\mathcal C}\bm 0&\forall i\in[I],
		\end{array}
	\end{equation}
where \begin{equation}
	\label{eq:expanded_params}
	\quad\Qb=\begin{bmatrix}\bm Q\\\bm S\end{bmatrix},\quad\rb=\begin{bmatrix}\bm q\\-\bm t\end{bmatrix},\quad\Tb(\bm x)=\begin{bmatrix}\bm T(\bm x)\\\bm 0\end{bmatrix},\quad	\hb(\bm x)=\begin{bmatrix}\bm h(\bm x)\\\bm 0\end{bmatrix},\;\text{ and }\;\Wb=\begin{bmatrix}\bm W&\bm 0\\ \bm 0& -\mathbb I\end{bmatrix}.
\end{equation}
\end{thm}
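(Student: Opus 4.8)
The plan is to start from the strong dual representation~\eqref{eq:dual_semiinf} of Theorem~\ref{thm:moment_problem}, specialised to $r=2$ and $d(\bm\xi,\xih_i)=\|\bm\xi-\xih_i\|_2$, so that
\[
\mathcal Z(\bm x)=\inf_{\lambda\in\mathbb R_+}\epsilon^2\lambda+\frac1I\sum_{i\in[I]}v_i(\lambda),\qquad v_i(\lambda)=\sup_{\bm\xi\in\Xi}\big[Z(\bm x,\bm\xi)-\lambda\|\bm\xi-\xih_i\|_2^2\big].
\]
It suffices to bound each inner supremum $v_i(\lambda)$ from above by the $i$-th objective term of~\eqref{eq:CP} at fixed $\lambda$; summing over $i$ and taking the infimum over $\lambda$ then gives $\mathcal Z(\bm x)\le\Zover(\bm x)$.

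First I would linearise and homogenise the support set. Invoking sufficiently expensive recourse, strong linear programming duality lets me replace $Z(\bm x,\bm\xi)$ by its dual~\eqref{eq:Recourse_dual}, and I would merge the two suprema into a single maximisation over the primal variable $\bm\xi$ and the dual variable. The key device is the augmented data~\eqref{eq:expanded_params}: writing the augmented dual variable $\bar{\bm p}=(\bm p,\bm v)\in\mathbb R_+^{M+J}$ and using $\Qb,\rb,\Tb,\hb,\Wb$, the slack $\bm v=\bm t-\bm S\bm\xi$ of the support inequality is folded into the single equality $\Qb\bm\xi+\rb=\Wb^\top\bar{\bm p}$, while $\bm v\ge\bm0$ now enforces $\bm\xi\in\Xi$ automatically. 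This yields
\[
v_i(\lambda)=\sup\Big\{(\Tb(\bm x)\bm\xi+\hb(\bm x))^\top\bar{\bm p}-\lambda\|\bm\xi-\xih_i\|_2^2\ :\ \bm\xi\ge\bm0,\ \bar{\bm p}\ge\bm0,\ \Qb\bm\xi+\rb=\Wb^\top\bar{\bm p}\Big\},
\]
a (nonconvex) quadratic maximisation over the nonnegative orthant with linear equality constraints.

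Next I would construct a Lagrangian/RLT certificate. I attach a free multiplier $\bm\psi_i$ to the linear equality $\Qb\bm\xi+\rb=\Wb^\top\bar{\bm p}$ and a free multiplier $\phi_{ij}$ to each \emph{squared} equality $(\Qb_{j:}\bm\xi-\Wb_{:j}^\top\bar{\bm p})^2=\rbl_j^2$ ($j\in[N_2+J]$), obtained by squaring the $j$-th linear equality. For a scalar $s_i$, a direct expansion should confirm the identity
\[
\begin{aligned}
&s_i+\rb^\top\bm\psi_i-\lambda\|\xih_i\|_2^2+\sum_{j}\phi_{ij}\rbl_j^2-\big[(\Tb(\bm x)\bm\xi+\hb(\bm x))^\top\bar{\bm p}-\lambda\|\bm\xi-\xih_i\|_2^2\big]\\
&\quad-\bm\psi_i^\top(\Qb\bm\xi+\rb-\Wb^\top\bar{\bm p})+\sum_j\phi_{ij}\big[(\Qb_{j:}\bm\xi-\Wb_{:j}^\top\bar{\bm p})^2-\rbl_j^2\big]=\big[\bm\xi^\top\ \bar{\bm p}^\top\ 1\big]\bm M_i\big[\bm\xi^\top\ \bar{\bm p}^\top\ 1\big]^\top,
\end{aligned}
\]
where $\bm M_i$ is exactly the matrix in~\eqref{eq:CP}: the block $\lambda\mathbb I+\Qb^\top\diag(\bm\phi_i)\Qb$ collects the $\lambda\|\bm\xi\|_2^2$ and the $(\Qb_{j:}\bm\xi)^2$ terms, the block $\Wb\diag(\bm\phi_i)\Wb^\top$ the $(\Wb_{:j}^\top\bar{\bm p})^2$ terms, and the off-diagonal block the bilinear objective term together with the RLT cross terms. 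I expect the bookkeeping here to be the main obstacle: keeping every sign and factor of $\tfrac12$ consistent when passing between the maximisation and its copositive relaxation. A point worth verifying is that placing the constant $\rbl_j^2$ on the \emph{right-hand side} of the squared equality (rather than inside the square) is precisely what cancels the would-be spurious linear term $\Qb^\top\diag(\bm\phi_i)\rb$, so that the linear blocks reduce cleanly to $-\lambda\xih_i-\tfrac12\Qb^\top\bm\psi_i$ and $\tfrac12(\Wb\bm\psi_i-\hb(\bm x))$ and all residual constants collapse to $s_i$.

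Finally, Lemma~\ref{lem:copositive_semiinf_equiv} identifies $\bm M_i\succeq_{\mathcal C}\bm0$ with nonnegativity of the above quadratic form for all $(\bm\xi,\bar{\bm p})\ge\bm0$. Restricting to \emph{feasible} $(\bm\xi,\bar{\bm p})$, the two bracketed multiplier terms vanish, so nonnegativity forces $(\Tb(\bm x)\bm\xi+\hb(\bm x))^\top\bar{\bm p}-\lambda\|\bm\xi-\xih_i\|_2^2\le s_i+\rb^\top\bm\psi_i-\lambda\|\xih_i\|_2^2+\sum_j\phi_{ij}\rbl_j^2$; taking the supremum over feasible points shows $v_i(\lambda)$ is at most the $i$-th objective term of~\eqref{eq:CP}. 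Assembling over $i$ and minimising over $\lambda$ then gives the claimed upper bound. Since only this weak (Lagrangian) direction is needed, no constraint qualification or strong-duality argument for the copositive program is required.
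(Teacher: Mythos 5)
Your proposal is correct and follows essentially the same route as the paper's proof: strong LP duality on the recourse problem, homogenisation of the support set via the augmented data \eqref{eq:expanded_params} with slack variables, the redundant squared equalities with multipliers $\phi_{ij}$, and Lemma~\ref{lem:copositive_semiinf_equiv} to pass to the copositive constraint, using only the weak (sup--inf interchange) direction of Lagrangian duality. Your explicit certificate identity is just an unpacked form of the paper's epigraph reformulation of \eqref{eq:case2_dual__}, and the signs and factors of $\tfrac12$ you flag do check out against the matrix in \eqref{eq:CP}.
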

{\color{black} 
\begin{rem}
Note that the extended recourse parameters defined in \eqref{eq:expanded_params} combine the input data of the recourse problem~\eqref{eq:Recourse} with the parameters characterizing the support set~\eqref{eq:support}.
\end{rem}}
\begin{proof}[Proof of Theorem~\ref{thm:CP}]
	By strong linear programming duality, which holds because problem \eqref{eq:DRSP} has sufficiently expensive recourse, we have  $ Z(\bm x,\bm\xi)= Z_{\mathrm d}(\bm x,\bm\xi)$ for every $\bm x\in\mathcal X$ and $\bm\xi\in\Xi$. Recalling that $r=2$ and $d(\bm{\xi}_1,\bm\xi_2)=\|\bm{\xi}_1-\bm{\xi}_2\|_2$, {\color{black} the explicit formula \eqref{eq:Recourse_dual} for the optimal value $ Z_{\mathrm d}(\bm x,\bm\xi)$ of the dual recourse problem} and the polyhedral representation \eqref{eq:support} for $\Xi$ allow us to reformulate \eqref{eq:dual_semiinf} as 
	\begin{align}
		\mathcal Z(\bm x)\;=&\;\displaystyle\inf_{\lambda\geq 0} \epsilon^2\lambda +\frac{1}{I}\sum_{i\in[I]}\sup_{\substack{\bm\xi\geq\bm 0\\\bm S\bm\xi\leq \bm t}}\; \sup_{\substack{\bm p\geq\bm 0\\\bm Q\bm{\xi}+\bm q=\bm W^\top \bm p}}\;(\bm T(\bm x)\bm\xi+\bm h(\bm x))^\top \bm p-\lambda\|\bm\xi-\hat{\bm\xi}_i\|_2^2\nonumber \\\label{eq:case2_dual_}
		=&\;\displaystyle\inf_{\lambda\geq 0} \epsilon^2\lambda +\frac{1}{I}\sum_{i\in[I]} \sup_{\substack{\bm\xi,\thetab\geq\bm 0\\\Qb\bm{\xi}+\rb=\Wb^\top \thetab}}(\Tb(\bm x)\bm\xi+\hb(\bm x))^\top  \thetab-\lambda\|\bm\xi-\hat{\bm\xi}_i\|_2^2,
	\end{align}
	where the second equality uses the definitions in \eqref{eq:expanded_params}. Note that the first $M$ components of the new decision variable $\thetab\in\RR^{M+J}$ correspond to the dual variable $\bm p$, while the remaining $J$ components represent slack variables for the support constraints $\bm S\bm\xi\leq \bm t$.  
	Next, we add the following non-convex constraints to each of the $I$ inner maximization problems in \eqref{eq:case2_dual_}. 
	\begin{equation*}
	{\rbl}_j^2=(\Wb_{:j}^\top\thetab-\Qb_{j:}^\top\bm\xi)^2=(\Qb_{j:}^\top\bm\xi)^2-2\Qb_{j:}^\top\bm \xi\thetab^\top\Wb_{:j}+(\Wb_{:j}^\top\thetab)^2\quad\forall j\in[N_2+J]
	\end{equation*}
	Note that these constraints are redundant as they follow from $\Qb\bm{\xi}+\rb=\Wb^\top\thetab$.
	As will be revealed later, however, these constraints ensure that the optimal value of the completely positive program dual to \eqref{eq:CP} coincides with $\mathcal Z(\bm x)$. Thanks to a recent result from the theory of quadratic programming~\cite{Burer09:copositive}, each of the emerging (nonconvex) quadratically constrained quadratic subproblems in \eqref{eq:case2_dual_} can be reformulated as a completely positive maximization problem. Thus, we could apply standard dualization techniques to reformulate \eqref{eq:case2_dual_} as a finite copositive minimization problem. Here, we instead pursue a more direct approach which leverages Lemma \ref{lem:copositive_semiinf_equiv}. By expressing all linear and quadratic constraints of the subproblems in Lagrangian form, we can reformulate~\eqref{eq:case2_dual_} as
	\begin{align}
		\label{eq:case2_dual__}
		\mathcal Z(\bm x)\;=&\;\;\;\;\displaystyle\inf_{\lambda\in\mathbb R_+} \epsilon^2\lambda +\frac{1}{I}\sum_{i\in[I]} \sup_{\bm\xi,\thetab\geq\bm 0}\inf_{\bm\psi_i,\bm\phi_i}\Big[(\Tb(\bm x)\bm\xi+\hb(\bm x))^\top  \thetab-\lambda\|\bm\xi-\hat{\bm\xi}_i\|_2^2+\bm\psi_i^\top\left(\Qb\bm{\xi}+\rb-\Wb^\top \thetab\right)\nonumber\\
		&\qquad\qquad\qquad\qquad\qquad\qquad\qquad\displaystyle+\sum_{j\in[N_2+J]}\phi_{ij}\left({\rbl}_j^2-(\Qb_{j:}^\top\bm\xi)^2+2\Qb_{j:}^\top\bm \xi\thetab^\top\Wb_{:j}-(\Wb_{:j}^\top\thetab)^2\right)\Big]\nonumber\\
		\leq&\;\displaystyle\inf_{\lambda\geq 0,\bm\psi_i,\bm\phi_i} \epsilon^2\lambda +\frac{1}{I}\sum_{i\in[I]} \sup_{\bm\xi,\thetab\geq\bm 0}\Big[(\Tb(\bm x)\bm\xi+\hb(\bm x))^\top  \thetab-\lambda\|\bm\xi-\hat{\bm\xi}_i\|_2^2+\bm\psi_i^\top\left(\Qb\bm{\xi}+\rb-\Wb^\top \thetab\right)\\
		&\qquad\qquad\qquad\qquad\qquad\qquad\qquad\displaystyle+\sum_{j\in[N_2+J]}\phi_{ij}\left(\rbl_j^2-(\Qb_{j:}^\top\bm\xi)^2+2\Qb_{j:}^\top\bm \xi\thetab^\top\Wb_{:j}-(\Wb_{:j}^\top\thetab)^2\right)\Big],\nonumber
	\end{align}
	{\color{black} where $\phi_{ij}$ denotes the $j$-th entry of $\bm\phi_i$.} Here, the inequality follows from interchanging the order of the supremum and the infimum operators. We observe now that the terms in square brackets constitute quadratic forms in $\bm\xi$ and $\thetab$. Thus, by introducing auxiliary epigraphical variables $s_i$, $i\in[I]$, {\color{black} to eliminate the suprema over $\bm\xi$ and $\thetab$,} we can reformulate \eqref{eq:case2_dual__} as the quadratically parameterized semi-infinite linear program
	\begin{equation*}
		\begin{array}{lc@{\quad}l@{\quad}l}
			&\displaystyle\inf &\displaystyle\epsilon^2\lambda +\frac{1}{I}\sum_{i\in[I]}\left[s_i+\rb^\top\bm\psi_i-\lambda\|\xih_i\|_2^2+\sum_{j\in[N_2+J]}\phi_{ij}{\rb}^2_j\right]\\
			&\st&\displaystyle \lambda\in\mathbb R_+,\;s_i\in\mathbb R,\; \bm\psi_i,\bm\phi_i\in\RR^{N_2+J} \qquad\qquad\qquad\qquad\qquad\qquad\qquad\qquad\;\;\;\forall i\in[I]\\
			&&\displaystyle\begin{bmatrix}\bm\xi\\\thetab \\ 1\end{bmatrix}^\top\begin{bmatrix}
				\displaystyle\lambda\mathbb I+\Qb^\top\diag(\bm\phi_i)\Qb&\displaystyle-\frac{1}{2}\Tb(\bm x)^\top-\Qb^\top\diag(\bm\phi_i)\Wb^\top&\displaystyle-\lambda\xih_i-\frac{1}{2}\Qb^\top\bm\psi_i\\
				\displaystyle-\frac{1}{2}\Tb(\bm x)-\Wb\diag(\bm\phi_i)\Qb&\displaystyle \Wb\diag(\bm\phi_i)\Wb^\top&\displaystyle \frac{1}{2}(\Wb\bm\psi_i-\hb(\bm x))\\\displaystyle
				(-\lambda\xih_i-\frac{1}{2}\bm Q^\top\bm\psi_i)^\top &\displaystyle\frac{1}{2}(\Wb\bm\psi_i-\hb(\bm x))^\top& \displaystyle s_i
			\end{bmatrix}\begin{bmatrix}\bm\xi\\\thetab \\ 1\end{bmatrix}\geq 0\\
			&&\qquad\qquad\qquad\qquad\qquad\qquad\qquad\qquad\qquad\qquad\qquad\qquad\qquad\qquad\quad\qquad\forall i\in[I]\; \forall (\bm\xi,\thetab)\in\RR_+^{K+M+J},
		\end{array}
	\end{equation*}
	 which is equivalent to the copositive program \eqref{eq:CP} by virtue of Lemma \ref{lem:copositive_semiinf_equiv}. 
\end{proof}

\subsection{A Completely Positive Reformulation of \texorpdfstring{$\mathcal Z(\bm x)$}{}}
We now derive the dual of the copositive program \eqref{eq:CP}. As we will see later, even though \eqref{eq:CP} provides an upper bound on \eqref{eq:WCE}, the optimal value of its dual problem coincides with the worst-case expectation \eqref{eq:WCE}. 
\begin{prop}
	\label{prop:CPP}
	The copositive program \eqref{eq:CP} is dual to the following completely positive program. 
	\begin{equation}
		\label{eq:CPP}
		\begin{array}{lc@{\quad}l@{\quad}l}
			\Zunder(\bm x)=&\sup  &\displaystyle\frac{1}{I}\sum_{i\in[I]}\tr(\Tb(\bm x)\bm Y_i)+\hb(\bm x)^\top \bm\gamma_i \\
			&\st & \displaystyle\bm \gamma_i\in\mathbb R^{M+J}_+,\;\bm\mu_i\in\mathbb R_+^K,\;\bm\Gamma_i\in\mathbb S_+^{M+J},\;\bm\Omega_i\in\mathbb S_+^K,\;\bm Y_i\in\mathbb R^{K\times(M+J)}&\forall i\in[I]\\
			&& \displaystyle\Qb\bm\mu_i+{\rb}=\Wb^\top\bm\gamma_i&\forall i\in[I]\\
			&&\displaystyle\Qb_{j:}^\top\bm\Omega_i\Qb_{j:}-2\Qb_{j:}^\top\bm Y_i\Wb_{:j}+\Wb_{:j}^\top\bm\Gamma_i\Wb_{:j}={\rbl}_j^2&\forall i\in[I]\;\forall j\in[N_2+J]\\
			&&\displaystyle\frac{1}{I}\sum_{i\in[I]}\tr(\bm \Omega_i)-2\xih_i^\top\bm\mu_i+\xih_i^\top\xih_i\leq\epsilon^2\\
			&&\begin{bmatrix}
				\bm\Omega_i&\bm Y_i&\bm\mu_i\\
				\bm Y_i^\top & \bm\Gamma_i & \bm\gamma_i\\
				\bm\mu_i^\top &\bm\gamma_i^\top& 1
			\end{bmatrix}\succeq_{\mathcal C^*}\bm 0&\forall i\in[I]
		\end{array}
	\end{equation}
\end{prop}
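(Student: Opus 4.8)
The plan is to recognize \eqref{eq:CP} as a standard linear conic minimization problem carrying $I$ copositive cone constraints, and to compute its conic dual directly, exploiting the well-known fact that the completely positive cone $\mathcal C^*$ is the dual of the copositive cone $\mathcal C$ with respect to the trace inner product $\langle\bm A,\bm B\rangle=\tr(\bm A\bm B)$.

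First I would attach to each copositive constraint in \eqref{eq:CP} a multiplier $\frac1I\bm\Lambda_i$ with $\bm\Lambda_i\succeq_{\mathcal C^*}\bm 0$, partitioned conformably with the $(K,M+J,1)$ block structure of the constraint matrix as
\[
\bm\Lambda_i=\begin{bmatrix}\bm\Omega_i&\bm Y_i&\bm\mu_i\\\bm Y_i^\top&\bm\Gamma_i&\bm\gamma_i\\\bm\mu_i^\top&\bm\gamma_i^\top&\tau_i\end{bmatrix},
\]
and a multiplier $\eta\in\mathbb R_+$ to the sign constraint $\lambda\geq 0$. I would then form the Lagrangian by subtracting $\frac1I\sum_{i\in[I]}\langle\bm\Lambda_i,\bm M_i\rangle+\eta\lambda$ from the objective (writing $\bm M_i$ for the copositive constraint matrix in \eqref{eq:CP}) and minimize over the now-free primal variables $\lambda$, $s_i$, $\bm\psi_i$ and $\bm\phi_i$.

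The computational core is to expand $\langle\bm\Lambda_i,\bm M_i\rangle$ block by block and collect the coefficient of each primal variable. The inner infimum is finite only if every such coefficient vanishes, and equating the coefficients to zero produces exactly the constraints of \eqref{eq:CPP}: the coefficient of $s_i$ forces $\tau_i=1$; the coefficient of $\bm\psi_i$ yields the linear equation $\Qb\bm\mu_i+\rb=\Wb^\top\bm\gamma_i$ from the $(1,3)$- and $(2,3)$-blocks; the coefficient of each $\phi_{ij}$, which involves only the $\diag(\bm\phi_i)$ terms, yields the quadratic equation $\Qb_{j:}^\top\bm\Omega_i\Qb_{j:}-2\Qb_{j:}^\top\bm Y_i\Wb_{:j}+\Wb_{:j}^\top\bm\Gamma_i\Wb_{:j}={\rbl}_j^2$; and the coefficient of $\lambda$, after absorbing $\eta\geq 0$ into an inequality, yields $\frac1I\sum_{i\in[I]}(\tr(\bm\Omega_i)-2\xih_i^\top\bm\mu_i+\|\xih_i\|_2^2)\leq\epsilon^2$. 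The residual constant terms, arising from the $-\frac12\Tb(\bm x)^\top$ and $-\frac12\hb(\bm x)$ entries, collapse to $\frac1I\sum_{i\in[I]}(\tr(\Tb(\bm x)\bm Y_i)+\hb(\bm x)^\top\bm\gamma_i)$, which is precisely the dual objective $\Zunder(\bm x)$.

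Finally I would observe that the explicit block restrictions $\bm\Omega_i\in\mathbb S_+^K$, $\bm\Gamma_i\in\mathbb S_+^{M+J}$ and $\bm\mu_i,\bm\gamma_i\geq\bm 0$ recorded in \eqref{eq:CPP} are not additional constraints but are implied by $\bm\Lambda_i\succeq_{\mathcal C^*}\bm 0$, because every principal submatrix of a completely positive matrix is again completely positive (hence positive semidefinite) and all entries of a completely positive matrix are nonnegative; this justifies presenting the dual with these block conditions listed alongside the single cone constraint. I expect the main obstacle to be purely mechanical rather than conceptual: the careful, error-prone bookkeeping of the trace inner product $\langle\bm\Lambda_i,\bm M_i\rangle$ — in particular extracting the per-index $\phi_{ij}$ coefficients from the $\diag(\bm\phi_i)$ terms via the identity $\tr(\diag(\bm\phi_i)\bm A)=\sum_{j}\phi_{ij}A_{jj}$ — since once the dual cone is identified the derivation is a textbook application of conic Lagrangian duality.
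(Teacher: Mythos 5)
Your proposal is correct and follows exactly the route the paper intends: the paper's own proof simply states that the claim ``follows from standard conic duality theory'' and omits the details, and your Lagrangian bookkeeping (completely positive multipliers $\bm\Lambda_i$ with $\tau_i=1$ forced by the $s_i$ coefficient, the linear and per-index quadratic constraints from the $\bm\psi_i$ and $\phi_{ij}$ coefficients, the Wasserstein inequality from the sign-constrained $\lambda$, and the residual objective $\frac{1}{I}\sum_i\tr(\Tb(\bm x)\bm Y_i)+\hb(\bm x)^\top\bm\gamma_i$) supplies precisely those omitted details correctly. The closing observation that the block conditions in \eqref{eq:CPP} are implied by $\bm\Lambda_i\succeq_{\mathcal C^*}\bm 0$ is also accurate.
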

\begin{proof}
The claim follows from standard conic duality theory. Details are omitted for brevity.
\end{proof}

In the following, we show that the worst-case expectation $\mathcal Z(\bm x)$ is in fact equal to the optimal value $\Zunder(\bm x)$ of the completely positive program \eqref{eq:CPP}. 

\begin{thm}[Completely Positive Reformulation]
	\label{thm:exact_CPP}
For any fixed $\bm x\in\mathcal X$ we have $\mathcal Z(\bm x)=\Zunder(\bm x)$.
\end{thm}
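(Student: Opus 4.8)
The plan is to establish the two inequalities $\mathcal Z(\bm x)\le\Zunder(\bm x)$ and $\mathcal Z(\bm x)\ge\Zunder(\bm x)$ separately. Note that Theorem~\ref{thm:CP} and weak conic duality (Proposition~\ref{prop:CPP}) only give $\mathcal Z(\bm x)\le\Zover(\bm x)$ and $\Zunder(\bm x)\le\Zover(\bm x)$, which do not by themselves relate $\mathcal Z(\bm x)$ and $\Zunder(\bm x)$; hence a dedicated argument is needed, and the device that closes the gap is Burer's completely positive reformulation of nonconvex quadratic programs~\cite{Burer09:copositive}.

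For the inequality $\mathcal Z(\bm x)\le\Zunder(\bm x)$ I would exhibit, for any distributions $\mathbb P_1,\dots,\mathbb P_I$ that are feasible and nearly optimal in the moment problem~\eqref{eq:primal_moment}, a feasible point of~\eqref{eq:CPP} of equal objective value. Since the extremal distributions of such a Wasserstein problem can be taken to be discrete with finitely many atoms, I may assume each $\mathbb P_i$ is supported on finitely many points $\bm\xi\in\Xi$ at which $Z(\bm x,\bm\xi)$ is finite; at each such atom I attach an optimal solution $\thetab$ of the extended dual recourse problem, so that $\Qb\bm\xi+\rb=\Wb^\top\thetab$ and $(\Tb(\bm x)\bm\xi+\hb(\bm x))^\top\thetab=Z(\bm x,\bm\xi)$. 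Setting the moment matrices blockwise to $(\bm\Omega_i,\bm Y_i,\bm\mu_i,\bm\Gamma_i,\bm\gamma_i)=\mathbb E_{\mathbb P_i}[(\bm\xi,\thetab,1)(\bm\xi,\thetab,1)^\top]$, every constraint of~\eqref{eq:CPP} becomes an expectation identity: the linear equality reads $\mathbb E[\Qb\bm\xi+\rb-\Wb^\top\thetab]=\bm 0$, the $j$-th quadratic equality reads $\mathbb E[(\Qb_{j:}^\top\bm\xi-\Wb_{:j}^\top\thetab)^2]=\rbl_j^2$, and the budget constraint reads $\tfrac1I\sum_i\mathbb E_{\mathbb P_i}[\|\bm\xi-\xih_i\|_2^2]\le\epsilon^2$. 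The completely positive constraint holds because $(\bm\xi,\thetab,1)(\bm\xi,\thetab,1)^\top\succeq_{\mathcal C^*}\bm 0$ pointwise on the nonnegative orthant and $\mathcal C^*$ is a closed convex cone, while the objective evaluates to $\tfrac1I\sum_i\mathbb E_{\mathbb P_i}[Z(\bm x,\bm\xi)]$. Letting the $\mathbb P_i$ approach optimality then yields $\Zunder(\bm x)\ge\mathcal Z(\bm x)$.

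For the reverse inequality I would start from the exact representation~\eqref{eq:case2_dual_} of $\mathcal Z(\bm x)$ (valid by strong linear programming duality under sufficiently expensive recourse). For each fixed $\lambda\ge0$ and each $i$, the inner maximization over $\bm\xi,\thetab\ge\bm 0$ subject to $\Qb\bm\xi+\rb=\Wb^\top\thetab$, augmented with the redundant quadratic constraints introduced in the proof of Theorem~\ref{thm:CP}, is a nonconvex quadratically constrained quadratic program over the nonnegative orthant, and Burer's theorem~\cite{Burer09:copositive} reformulates it \emph{exactly} as a completely positive program; the quadratic equalities that Burer attaches to the linear constraints are precisely those appearing in~\eqref{eq:CPP}. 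Linearizing the penalty via $\lambda\|\bm\xi-\xih_i\|_2^2\mapsto\lambda(\tr(\bm\Omega_i)-2\xih_i^\top\bm\mu_i+\|\xih_i\|_2^2)$ then gives $\mathcal Z(\bm x)=\inf_{\lambda\ge0}\sup[\tfrac1I\sum_i(\tr(\Tb(\bm x)\bm Y_i)+\hb(\bm x)^\top\bm\gamma_i)+\lambda(\epsilon^2-\tfrac1I\sum_i(\tr(\bm\Omega_i)-2\xih_i^\top\bm\mu_i+\|\xih_i\|_2^2))]$, where the supremum runs over all completely positive constraints of~\eqref{eq:CPP} other than the coupling budget constraint. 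Interchanging $\inf_\lambda$ and $\sup$ via weak minimax and then carrying out the inner infimum over $\lambda\ge0$ explicitly (which is finite exactly when the budget constraint holds, and then equals the objective of~\eqref{eq:CPP}) yields $\mathcal Z(\bm x)\ge\Zunder(\bm x)$, completing the proof.

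The hard part will be justifying the application of Burer's theorem to the inner subproblems, since the support set $\Xi$, and hence the feasible set $\{(\bm\xi,\thetab)\ge\bm 0:\Qb\bm\xi+\rb=\Wb^\top\thetab\}$, may be unbounded. One must verify Burer's regularity condition, that is, control the recession directions of this set and confirm that the completely positive relaxation stays exact; here the strictly concave penalty $-\lambda\|\bm\xi-\xih_i\|_2^2$ (for $\lambda>0$) bounds the $\bm\xi$-component, while sufficiently expensive recourse (Definition~\ref{as:dual_complete}) keeps the inner value finite along the remaining recession directions, so that no objective contribution escapes to infinity. A secondary technical point is the integrability of the attached dual solutions $\thetab$ in the first inequality, which is exactly why restricting to finitely supported worst-case distributions is convenient.
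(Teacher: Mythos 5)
Your first inequality ($\mathcal Z(\bm x)\le\Zunder(\bm x)$) is essentially the paper's argument: both of you build the blocks of the moment matrices in \eqref{eq:CPP} as expectations of $(\bm\xi,\thetab(\bm\xi),1)(\bm\xi,\thetab(\bm\xi),1)^\top$ with a dual recourse solution attached to each scenario and verify each constraint by taking expectations of a pointwise identity. The paper sidesteps your reduction to finitely supported distributions by invoking a measurable selector of the dual feasible-set mapping (Corollary~14.6 of Rockafellar and Wets), which settles the integrability issue you flag for arbitrary feasible $\PP_i$; if you keep the discrete-support reduction you owe a Richter--Rogosinski-type justification, and in either case the attached $\thetab(\bm\xi)$ must be an \emph{optimal} dual solution for the objective identity to be an equality. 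The reverse inequality is where you genuinely diverge. The paper never invokes Burer's theorem as a black box: it takes an arbitrary feasible solution of \eqref{eq:CPP}, decomposes its completely positive matrices as in \eqref{eq:CPP_decomposition}, and constructs an explicit one-parameter family of discrete distributions $\PP_i^\kappa$ feasible in the moment problem \eqref{eq:primal_moment} --- atoms $\bm\chi_{i\ell}/\alpha_{i\ell}$ for the components with $\alpha_{i\ell}>0$ and atoms $\xih_i+\kappa^{-1}\sqrt{|\mathcal L_i^0|}\,\bm\chi_{i\ell}$ carrying vanishing mass $\kappa^2/|\mathcal L_i^0|$ for the recession components --- whose objective values recover $\tr(\Tb(\bm x)\bm Y_i)+\hb(\bm x)^\top\bm\gamma_i$ as $\kappa\downarrow 0$; the feasibility of these atoms and of the attached dual solutions is exactly Lemma~\ref{lem:CPP_decomposition_property}, the specialization of Burer's decomposition lemma to this setting. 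Your route --- Burer's value equality for each inner subproblem of \eqref{eq:case2_dual_} at fixed $\lambda$, then weak minimax and an explicit infimum over $\lambda\ge 0$ to reinstate the budget constraint --- is sound and shorter, and your minimax step does point the right way ($\inf\sup\ge\sup\inf$ delivers precisely $\mathcal Z(\bm x)\ge\Zunder(\bm x)$). What it buys is brevity; what the paper's construction buys is self-containedness and an explicit sequence of near-optimal distributions, which is reused later (e.g.\ in the proof of Proposition~\ref{prop:relaxation}).

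Two points still need shoring up in your version. First, Burer's value equality over an \emph{unbounded} feasible region is exactly the recession-direction analysis the paper performs by hand: one must argue that decomposition components with $\alpha_{i\ell}=0$ either contribute nonnegatively to the lifted objective or certify unboundedness of the subproblem. Your heuristic that the penalty $-\lambda\|\bm\xi-\xih_i\|_2^2$ tames the recession directions only controls the $\bm\xi$-block, not the $\thetab$-block, so you cannot dispense with that analysis --- you can only delegate it to the correct general form of Burer's theorem. Second, your argument starts from the dual representation \eqref{eq:dual_semiinf}, which Theorem~\ref{thm:moment_problem} guarantees only for $\epsilon>0$, whereas the paper's proof works directly with the primal moment problem \eqref{eq:primal_moment} and is therefore insensitive to this corner case.
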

\begin{proof}
	Recall from Theorem~\ref{thm:moment_problem} that $\mathcal Z(\bm x)$ coincides with the optimal value of the moment problem \eqref{eq:primal_moment}. We first prove that $\mathcal Z(\bm x)\leq\Zunder(\bm x)$. To this end, we show that any 
	feasible solution $\{\PP_i\}_{i\in[I]}$ of \eqref{eq:primal_moment} gives rise to a feasible solution to \eqref{eq:CPP} with the same objective function value. Let $\bm p(\bm\xi)$ be a measurable selector of the dual feasible set mapping $\bm\xi\rightrightarrows\{\bm{p}\in\RR_+^M:\bm Q\bm{\xi}+\bm q=\bm W^\top\bm{p}\}$, $\bm{\xi}\in\Xi$, which exists due to \cite[Corollary 14.6]{RW09:VA} and because problem \eqref{eq:DRSP} has sufficiently expensive recourse. Next, define $\thetab(\bm{\xi})=(\bm p(\bm\xi),\bm t-\bm S\bm\xi)$. By construction we have $\Qb\bm\xi+\rb=\Wb^\top \thetab(\bm\xi)$ for all $\bm\xi\in\Xi$. 
	Next, define the following candidate solution for~\eqref{eq:CPP}:
	\begin{equation}
		\label{eq:candidate_solution_1}
		\left.
		\begin{array}{ll}
			\displaystyle\bm\mu_i=\int_{\Xi}\bm{\xi}\;\mathbb P_i(\textup{d}\bm\xi),&\displaystyle\bm\Omega_i=\int_{\Xi}\bm{\xi}\bm{\xi}^\top\;\mathbb P_i(\textup{d}\bm\xi),\\[2mm]		\displaystyle\bm\gamma_i=\int_{\Xi}\thetab(\bm{\xi})\;\mathbb P_i(\textup{d}\bm\xi),&\displaystyle\bm\Gamma_i=\int_{\Xi}\thetab(\bm{\xi})\thetab(\bm\xi)^\top\;\mathbb P_i(\textup{d}\bm\xi),\quad\bm Y_i=\int_{\Xi}\bm{\xi}\thetab(\bm{\xi})^\top\;\mathbb P_i(\textup{d}\bm\xi)
		\end{array}\right\}\;\forall i\in[I].
	\end{equation}
	Since $\xit$ and $\thetab(\xit)$ are non-negative random vectors, the matrix of their moments of degree $\leq 2$ is completely positive. Thus, the candidate solution \eqref{eq:candidate_solution_1} satisfies the last constraint in \eqref{eq:CPP}. We further have
	\begin{equation*}
		\Qb\bm{\xi}+\rb=\Wb^\top \thetab(\bm{\xi})\;\;\forall\bm{\xi}\in\Xi\quad\implies\quad\displaystyle\Qb\bm\mu_i+\rb=\Wb^\top\bm\gamma_i
	\end{equation*}
	and
	\begin{equation*}
		(\Qb_{j:}^\top\bm{\xi}+{\rbl}_j)^2=(\Wb_{:j}^\top\thetab(\bm\xi))^2\;\;\forall\bm{\xi}\in\Xi\quad\implies\quad \Qb_{j:}^\top\bm\Omega_i\Qb_{j:}-2\Qb_{j:}^\top\bm Y_i\Wb_{:j}+\Wb_{:j}^\top\bm\Gamma_i\Wb_{:j}={\rbl}_j^2
	\end{equation*}
	for all $j\in[N_2+J]$. Here, the implications follow from taking expectations with respect to $\PP_i$ on both sides of the semi-infinite constraints. Thus, the candidate solution \eqref{eq:candidate_solution_1} also satisfies the first and the second constraint systems in~\eqref{eq:CPP}.  Next, the feasibility of $\{\PP_i\}_{i\in[I]}$ in the generalized moment problem \eqref{eq:primal_moment} implies that  $\frac{1}{I}\sum_{i\in[I]}\int_{\Xi}\|\bm{\xi}-\xih_i\|_2^2~\PP_i(\textup{d}\bm\xi)\leq\epsilon^2$. Expanding the squared norm term, we obtain
	\begin{equation*}
		\begin{array}{rl}
			\epsilon^2\geq\displaystyle\displaystyle\frac{1}{I}\sum_{i\in[I]}\int_{\Xi}\bm{\xi}^\top\bm\xi-2\xih_i^\top\bm\xi+\xih_i^\top\xih_i~\PP_i(\textup{d}\bm\xi)=\displaystyle\displaystyle\frac{1}{I}\sum_{i\in[I]}\tr(\bm \Omega_i)-2\xih_i^\top\bm\mu_i+\xih_i^\top\xih_i,
		\end{array}
	\end{equation*}
	and thus the candidate solution \eqref{eq:candidate_solution_1} also satisfies the penultimate constraint in \eqref{eq:CPP}. Lastly, the objective function of \eqref{eq:primal_moment} can be reformulated as 
	\begin{equation*}
		\begin{array}{c@{\quad}l@{\quad}l}
			\displaystyle\frac{1}{I}\sum_{i\in[I]}\int_{\Xi} Z(\bm x,\bm{\xi})~\PP_i(\textup{d}\bm\xi)=		\displaystyle\frac{1}{I}\sum_{i\in[I]}\int_{\Xi}(\Tb(\bm x)\bm{\xi}+\hb(\bm x))^\top  
			\thetab(\bm{\xi})~\PP_i(\textup{d}\bm\xi)=\displaystyle\frac{1}{I}\sum_{i\in[I]}\tr(\Tb(\bm x)\bm Y_i)+\hb(\bm x)^\top \bm\gamma_i,
		\end{array}
	\end{equation*}
	where the first equality follows from the observation that 	$(\Tb(\bm x)\bm{\xi}+\hb(\bm x))^\top\thetab(\bm{\xi})=(\bm T(\bm x)\bm{\xi}+\bm h(\bm x))^\top\bm p(\bm\xi)$ for all $\bm\xi\in\Xi$. Note that the rightmost term in the above equation corresponds to the objective value of the candidate solution \eqref{eq:candidate_solution_1} in \eqref{eq:CPP}. 
	We have thus shown that from any feasible solution  $\{\PP_i\}_{i\in[I]}$ to the moment problem \eqref{eq:primal_moment} we can construct a feasible solution $\{(\bm\mu_i,\bm\gamma_i,\bm\Omega_i,\bm\Gamma_i,\bm Y_i)\}_{i\in[I]}$ 	to the completely positive program \eqref{eq:CPP} that attains the same objective value. This demonstrates that $\mathcal Z(\bm x)\leq \Zunder(\bm x)$. 
	
	 To prove the converse inequality, consider any feasible solution $\{(\bm\mu_i,\bm\gamma_i,\bm\Omega_i,\bm\Gamma_i,\bm Y_i)\}_{i\in[I]}$ to \eqref{eq:CPP}, which gives rise to a moment matrix with the following completely positive decomposition,
	 \begin{equation}
	 	\label{eq:CPP_decomposition}
	 	\begin{bmatrix}
	 		\bm\Omega_i&\bm Y_i&\bm\mu_i\\
	 		\bm Y_i^\top & \bm\Gamma_i & \bm\gamma_i\\
	 		\bm\mu_i^\top &\bm\gamma_i^\top& 1
	 	\end{bmatrix}=\sum_{\ell\in\mathcal L_i}\begin{bmatrix}\bm\chi_{i\ell}\\\bm\eta_{i\ell}\\\alpha_{i\ell}\end{bmatrix}\begin{bmatrix}\bm\chi_{i\ell}\\\bm\eta_{i\ell}\\\alpha_{i\ell}\end{bmatrix}^\top,
	 \end{equation}
	 where $\mathcal L_i$ is a finite index set, while $\bm\chi_{i\ell}\in\mathbb R_+^K$, $\bm\eta_{i\ell}\in\mathbb R_+^{M+J}$ and $\alpha_{i\ell}\in\mathbb R_+$ for every $\ell\in\mathcal L_i$. Partitioning $\mathcal L_i$ into $\mathcal L_i^+=\{\ell\in\mathcal L:\alpha_{i\ell}>0\}$ and $\mathcal L_i^0=\{\ell\in\mathcal L:\alpha_{i\ell}=0\}$, the decomposition
	  \eqref{eq:CPP_decomposition} reduces to 
	 \begin{equation}
	 	\label{eq:CPP_decomposition_new}
	 	\begin{array}{rl}
	 		\begin{bmatrix}
	 			\bm\Omega_i&\bm Y_i&\bm\mu_i\\
	 			\bm Y_i^\top & \bm\Gamma_i & \bm\gamma_i\\
	 			\bm\mu_i^\top &\bm\gamma_i^\top& 1
	 		\end{bmatrix}&\displaystyle=
	 		\displaystyle\sum_{\ell\in\mathcal L_i^+}\begin{bmatrix}\bm\chi_{i\ell}\bm\chi_{i\ell}^\top& \bm\chi_{i\ell}\bm\eta_{i\ell}^\top & \alpha_{i\ell}\bm\chi_{i\ell}\\
	 			\bm\eta_{i\ell}\bm\chi_{i\ell}^\top& \bm\eta_{i\ell}\bm\eta_{i\ell}^\top & \alpha_{i\ell}\bm\eta_{i\ell}\\
	 			\alpha_{i\ell}\bm\chi_{i\ell}^\top& \alpha_{i\ell}\bm\eta_{i\ell}^\top & \alpha_{i\ell}^2\\
	 		\end{bmatrix}
	 		+\sum_{\ell\in\mathcal L_i^0}\begin{bmatrix}\bm\chi_{i\ell}\bm\chi_{i\ell}^\top& \bm\chi_{i\ell}\bm\eta_{i\ell}^\top & \bm 0\\ 
	 				\bm\eta_{i\ell}\bm\chi_{i\ell}^\top & \bm\eta_{i\ell}\bm\eta_{i\ell}^\top& \bm 0\\
	 			\bm 0^\top & \bm 0^\top& 0
	 		\end{bmatrix}.
	 	\end{array}
	 \end{equation}
	 Next, we construct a sequence of discrete distributions $\PP^\kappa_i$, $i\in[I]$, parametrized by $\kappa\in[0,1]$, that satisfy
	 \begin{equation*}
	 	\left.\begin{array}{ll}
	 	\displaystyle\PP^\kappa_i\left(\xit=\frac{\bm\chi_{i\ell}}{\alpha_{i\ell}}\right)=(1-\kappa^2){\alpha_{i\ell}^2}&\forall \ell\in\mathcal L_i^+\\
	 	\displaystyle\PP^\kappa_i\left(\xit=\xih_i+\frac{1}{\kappa}{\sqrt{|\mathcal L_i^0|}}{\bm\chi_{i\ell}}\right)=\frac{\kappa^2}{|\mathcal L_i^0|}&\forall \ell\in\mathcal L_i^0
	 	\end{array}\right\}\forall i\in[I].
	 \end{equation*}
	 Observe that each $\PP_i^\kappa$ is indeed a probability distribution since $\sum_{\ell\in\mathcal{L}_i^+}\alpha^2_{i\ell}=1$ due to~\eqref{eq:CPP_decomposition_new}. 
	 Lemma \ref{lem:CPP_decomposition_property} below implies that $\bm\chi_{i\ell}/\alpha_{i\ell}\in\Xi$ for every $\ell\in\mathcal L_i^+$ and $\xih_i+\frac{1}{\kappa}{\sqrt{|\mathcal L_i^0|}}{\bm\chi_{i\ell}}\in\Xi$ for every $\ell\in\mathcal L_i^0$. Thus, $\PP^\kappa_i$ is supported on~$\Xi$. We further have 
	 \begin{equation*}
	 	\begin{array}{rl}
	 		\displaystyle\frac{1}{I}\sum_{i\in[I]}\EE_{\PP^\kappa_i}\left[\|\xit-\xih_i\|_2^2\right]=&\displaystyle\frac{1}{I}\sum_{i\in[I]}\left[\sum_{\ell\in\mathcal L_i^+}(1-\kappa^2)\alpha_{i\ell}^2\|\bm\chi_{i\ell}/\alpha_{i\ell}-\xih_i\|_2^2 + \sum_{\ell\in\mathcal L_i^0}\frac{\kappa^2}{|\mathcal L_i^0|}\|\xih_i+{\frac{1}{\kappa}\sqrt{|\mathcal L_i^0|}}{\bm\chi_{i\ell}}-\xih_i\|_2^2\right]\\
	 		\leq&\displaystyle\frac{1}{I}\sum_{i\in[I]}\left[\sum_{\ell\in\mathcal L_i^+}\left(\bm\chi_{i\ell}^\top\bm\chi_{i\ell}-2\xih_i^\top(\alpha_{i\ell}\bm\chi_{i\ell})\right)+\xih_i^\top\xih_i+\sum_{\ell\in\mathcal L_i^0}\bm\chi_{i\ell}^\top\bm\chi_{i\ell}\right]\\
	 		=& \displaystyle\frac{1}{I}\sum_{i\in[I]}\tr(\bm\Omega_i)-2\xih_i^\top\bm\mu_i+\xih_i^\top\xih_i\leq\epsilon^2,
	 	\end{array}
	 \end{equation*}
	 where the first inequality holds since $(1-\kappa^2)\leq 1$, the second equality follows from the decomposition \eqref{eq:CPP_decomposition_new}, and the last inequality follows from the penultimate constraint in \eqref{eq:CPP}. Thus, the distributions $\PP^\kappa_i$, $i\in[I]$, are feasible in the generalized moment problem \eqref{eq:primal_moment}. 
	 We next construct feasible solutions for the dual recourse problem \eqref{eq:Recourse_dual}. For any $i\in[I]$ and $\ell\in\mathcal L_i$, we define $\bm \rho_{i\ell}$ as the vector of the first $M$ elements of $\bm\eta_{i\ell}\in\mathbb R_+^{M+J}$.   
	 Lemma \ref{lem:CPP_decomposition_property} implies that
	 	$\bm Q\bm\chi_{i\ell}/\alpha_{i\ell}+\bm q=\bm W^\top\bm\rho_{i\ell}/\alpha_{i\ell}$ for every $\ell\in\mathcal L_i^+$.
	 Thus, $\bm p_{i\ell}=\bm\rho_{i\ell}/\alpha_{i\ell}$ is feasible in the dual recourse problem \eqref{eq:Recourse_dual} at $\bm\xi=\bm\chi_{i\ell}/\alpha_{i\ell}$ for $\ell\in\mathcal L_i^+$. Next, for any $i\in[I]$, let $\hat{\bm p}_i$ be a feasible solution to the dual recourse problem \eqref{eq:Recourse_dual} at $\bm\xi=\xih_i$, which exists because problem \eqref{eq:DRSP} has sufficiently expensive recourse.  Hence, we have $\bm Q\xih_i+\bm q=\bm W^\top\hat{\bm p}_i$.  Lemma \ref{lem:CPP_decomposition_property} further implies that
	 $\bm Q\bm\chi_{i\ell}=\bm W^\top\bm\rho_{i\ell}$ for every $\ell\in\mathcal L_i^0$. Combining the last two equalities yields
	 \begin{equation*}
	 \bm Q\left(\xih_i+\frac{1}{\kappa}{\sqrt{|\mathcal L_i^0|}}\bm\chi_{i\ell}\right)+\bm q=\bm W^\top\left(\hat{\bm p}_i+\frac{1}{\kappa}{\sqrt{|\mathcal L_i^0|}}\bm\rho_{i\ell}\right)\quad\forall \ell\in\mathcal L_i^0.
	 \end{equation*}
	 Thus, $\bm p_{i\ell}=\hat{\bm p}_i+\frac{1}{\kappa}{\sqrt{|\mathcal L_i^0|}}\bm\rho_{i\ell}$ constitutes a feasible solution in the dual recourse problem \eqref{eq:Recourse_dual} at $\bm\xi=\xih_i+\frac{1}{\kappa}{\sqrt{|\mathcal L_i^0|}}\bm\chi_{i\ell}$ for $\ell\in\mathcal L_i^0$. In summary, we have 
	 \begin{equation*}
	 \label{eq:obj_bound_1}
	  Z\left(\bm x,\frac{\bm\chi_{i\ell}}{\alpha_{i\ell}}\right)= Z_{\mathrm d}\left(\bm x,\frac{\bm\chi_{i\ell}}{\alpha_{i\ell}}\right)\geq\left(\bm T(\bm{x})\frac{\bm\chi_{i\ell}}{\alpha_{i\ell}}+\bm h(\bm x)\right)^\top\bm p_{i\ell}\qquad\forall \ell\in\mathcal L_i^+
	 \end{equation*}
	 and
	 \begin{equation*}
	 \label{eq:obj_bound_2}
	  Z\left(\bm x,\xih_i+\frac{1}{\kappa}{\sqrt{|\mathcal L_i^0|}}\bm\chi_{i\ell}\right)= Z_{\mathrm d}\left(\bm x,\xih_i+\frac{1}{\kappa}{\sqrt{|\mathcal L_i^0|}}\bm\chi_{i\ell}\right)\geq\left(\bm T(\bm{x})\left(\xih_i+\frac{1}{\kappa}{\sqrt{|\mathcal L_i^0|}}\bm\chi_{i\ell}\right)+\bm h(\bm x)\right)^\top\bm p_{i\ell}\quad\forall \ell\in\mathcal L_i^0.
	 \end{equation*}
	 Using these estimates, we can now bound the objective value of the discrete distributions $\PP^\kappa_i$, $i\in[I]$, in~\eqref{eq:primal_moment}. Specifically, we obtain
	 \begin{equation*}
	 	\begin{array}{r@{\quad}l@{\quad}l}
	 		&\displaystyle\frac{1}{I}\sum_{i\in[I]}\EE_{\PP^\kappa_i}\left[Z(\bm x,\xit)\right]\\
	 		=&\displaystyle\frac{1}{I}\sum_{i\in[I]}\left[\sum_{\ell\in\mathcal L_i^+}(1-\kappa^2)\alpha_{i\ell}^2 Z\bigg(\bm x,\frac{\bm{\chi}_{i\ell}}{\alpha_{i\ell}}\bigg)+\sum_{\ell\in\mathcal L_i^0}\frac{\kappa^2}{|\mathcal L_i^0|} Z\bigg(\bm x,\xih_i+\frac{\sqrt{|\mathcal L_i^0|}}{\kappa}{\bm\chi_{i\ell}}\bigg)\right]\\
	 		\geq&\displaystyle\frac{1}{I}\sum_{i\in[I]}\left[\sum_{\ell\in\mathcal L_i^+}(1-\kappa^2)\alpha_{i\ell}^2\left(\bm T(\bm x)\frac{\bm\chi_{i\ell}}{\alpha_{i\ell}}+\bm h(\bm x)\right)^\top \bm p_{i\ell}
	 		+\sum_{\ell\in\mathcal L_i^0}\frac{\kappa^2}{|\mathcal L_i^0|}\left(\bm T(\bm x)\left(\xih_i+\frac{\sqrt{|\mathcal L_i^0|}}{\kappa}{\bm\chi_{i\ell}}\right)+\bm h(\bm x)\right)^\top \bm p_{i\ell}\right]\\
	 		=&\displaystyle\frac{1}{I}\sum_{i\in[I]}\Bigg[\sum_{\ell\in\mathcal L_i^+}(1-\kappa^2)\alpha_{i\ell}^2\left(\Tb(\bm x)\frac{\bm\chi_{i\ell}}{\alpha_{i\ell}}+\hb(\bm x)\right)^\top \frac{\bm\eta_{i\ell}}{\alpha_{i\ell}}\\
	 		&\;\displaystyle\qquad+\sum_{\ell\in\mathcal L_i^0}\frac{\kappa^2}{|\mathcal L_i^0|}\left(\Tb(\bm x)\left(\xih_i+\frac{\sqrt{|\mathcal L_i^0|}}{\kappa}{\bm\chi_{i\ell}}\right)+\hb(\bm x)\right)^\top\left([\hat{\bm p}_i^\top\;\bm 0]^\top+\frac{\sqrt{|\mathcal L_i^0|}}{\kappa}\bm\eta_{i\ell}\right)\Bigg]\\
	 		=&\displaystyle\frac{1}{I}\sum_{i\in[I]}\Bigg[\sum_{\ell\in\mathcal L_i^+}(1-\kappa^2)\left[\tr(\Tb(\bm x)\bm\chi_{i\ell}\bm\eta_{i\ell}^\top)+\hb(\bm x)^\top (\alpha_{i\ell}\bm\eta_{i\ell})\right]+\sum_{\ell\in\mathcal L_i^0}\tr(\Tb(\bm x)\bm\chi_{i\ell}\bm\eta_{i\ell}^\top)\Bigg]\\
	 		&\;\displaystyle\qquad+\displaystyle\frac{1}{I}\sum_{i\in[I]}\left[\sum_{\ell\in\mathcal L_i^0}\frac{\kappa^2}{|\mathcal L_i^0|}\left(\left(\bm T(\bm x)\left(\xih_i+\frac{\sqrt{|\mathcal L_i^0|}}{\kappa}{\bm\chi_{i\ell}}\right)+\bm h(\bm x)\right)^\top\hat{\bm p}_i+\left(\Tb(\bm x)\xih_i+\hb(\bm x)\right)^\top\frac{\sqrt{|\mathcal L_i^0|}}{\kappa}\bm\eta_{i\ell}\right)\right].
	 	\end{array}
	 \end{equation*}
	 Together with the decomposition \eqref{eq:CPP_decomposition_new}, the above estimate implies that
	 \begin{equation*}
	 \lim_{\kappa\downarrow 0}\frac{1}{I}\sum_{i\in[I]}\EE_{\PP^\kappa_i}\left[Z(\bm x,\xit)\right]\geq\frac{1}{I}\sum_{i\in[I]}\tr(\Tb(\bm x)\bm Y_i)+\hb(\bm x)^\top \bm\gamma_i.
	 \end{equation*}  
	 We have therefore shown that any feasible solution to \eqref{eq:CPP} can be used to construct a sequence of feasible solutions to the generalized moment problem \eqref{eq:primal_moment} that asymptotically attain a (weakly) larger objective value. This demonstrates that $\mathcal Z(\bm x)\geq \Zunder(\bm x)$. Thus the claim follows. 
\end{proof}
The proof of Theorem \ref{thm:exact_CPP} relies on the following lemma, which is inspired by Lemma 2.2 in~\cite{Burer09:copositive} and Proposition 3.1 in~\cite{NRZ11:mixed01}. 
\begin{lem}
	\label{lem:CPP_decomposition_property}
	If $\{(\bm\mu_i,\bm\gamma_i,\bm\Omega_i,\bm\Gamma_i,\bm Y_i)\}_{i\in[I]}$ is feasible in \eqref{eq:CPP} and if $(\bm\chi_{i\ell},\bm\eta_{i\ell},\alpha_{i\ell})\in\RR_+^K\times\RR_+^{M+J}\times\RR_+$, $\ell\in\mathcal L_i$, satisfies \eqref{eq:CPP_decomposition} for some $i\in[I]$, then 
	\begin{equation*}
			\bm\chi_{i\ell}/\alpha_{i\ell}\in\Xi \;\text{ and }\; \bm Q(\bm\chi_{i\ell}/\alpha_{i\ell})+{\bm q}=\bm W^\top(\bm\rho_{i\ell}/\alpha_{i\ell})\;\;\forall \ell\in\mathcal L_i^+,
	\end{equation*}
	while
	\begin{equation*}
		\bm\chi_{i\ell}\in\textup{recc}(\Xi) \;\text{ and }\; \bm Q\bm\chi_{i\ell}=\bm W^\top\bm\rho_{i\ell}\;\;\forall \ell\in\mathcal L_i^0,
	\end{equation*}
	where $\textup{recc}(\Xi)=\{\bm\xi\in\RR_+^K:\bm S\bm\xi\leq\bm 0\}$ is the recession cone of $\Xi$, and $\bm\rho_{i\ell}$ is the vector of the first $M$ elements of $\bm\eta_{i\ell}$. 
\end{lem}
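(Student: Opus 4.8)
The plan is to exploit the completely positive decomposition \eqref{eq:CPP_decomposition} together with the linear and quadratic equality constraints of \eqref{eq:CPP}, and to argue that these constraints force each rank-one term in the decomposition to satisfy the recourse and support relations \emph{individually}. Reading off the blocks of \eqref{eq:CPP_decomposition}, I would first record the identities $\bm\mu_i=\sum_{\ell\in\mathcal L_i}\alpha_{i\ell}\bm\chi_{i\ell}$, $\bm\gamma_i=\sum_{\ell\in\mathcal L_i}\alpha_{i\ell}\bm\eta_{i\ell}$, $\bm\Omega_i=\sum_{\ell\in\mathcal L_i}\bm\chi_{i\ell}\bm\chi_{i\ell}^\top$, $\bm Y_i=\sum_{\ell\in\mathcal L_i}\bm\chi_{i\ell}\bm\eta_{i\ell}^\top$ and $\bm\Gamma_i=\sum_{\ell\in\mathcal L_i}\bm\eta_{i\ell}\bm\eta_{i\ell}^\top$, together with the normalization $\sum_{\ell\in\mathcal L_i}\alpha_{i\ell}^2=1$ obtained from the bottom-right entry of the moment matrix.

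Next I would fix $i$ and an index $j\in[N_2+J]$ and introduce the scalars $u_\ell=\Qb_{j:}^\top\bm\chi_{i\ell}-\Wb_{:j}^\top\bm\eta_{i\ell}$ for $\ell\in\mathcal L_i$. Substituting the block identities into the quadratic constraint $\Qb_{j:}^\top\bm\Omega_i\Qb_{j:}-2\Qb_{j:}^\top\bm Y_i\Wb_{:j}+\Wb_{:j}^\top\bm\Gamma_i\Wb_{:j}={\rbl}_j^2$ makes the left-hand side collapse into a perfect square, yielding $\sum_{\ell\in\mathcal L_i}u_\ell^2={\rbl}_j^2$. Taking the $j$-th component of the linear constraint $\Qb\bm\mu_i+\rb=\Wb^\top\bm\gamma_i$ and substituting the same identities gives, directly, $\sum_{\ell\in\mathcal L_i}\alpha_{i\ell}u_\ell=-{\rbl}_j$.

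The crux of the argument is a tight Cauchy--Schwarz inequality applied to the vectors $(\alpha_{i\ell})_{\ell}$ and $(u_\ell)_{\ell}$. Indeed, $\big(\sum_{\ell}\alpha_{i\ell}u_\ell\big)^2\leq\big(\sum_{\ell}\alpha_{i\ell}^2\big)\big(\sum_{\ell}u_\ell^2\big)={\rbl}_j^2$ by the normalization, while the two displayed identities force the left-hand side to equal ${\rbl}_j^2$ exactly. Hence equality holds in Cauchy--Schwarz, so $(u_\ell)_\ell$ is proportional to the nonzero vector $(\alpha_{i\ell})_\ell$; evaluating the proportionality constant through $\sum_{\ell}\alpha_{i\ell}u_\ell=-{\rbl}_j$ yields $u_\ell=-{\rbl}_j\,\alpha_{i\ell}$ for every $\ell$. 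Letting $j$ range over $[N_2+J]$ and stacking these scalar identities produces the single vector equation $\Qb\bm\chi_{i\ell}+\alpha_{i\ell}\rb=\Wb^\top\bm\eta_{i\ell}$ valid for all $\ell\in\mathcal L_i$. I expect this tightness step to be the only genuinely subtle point; everything else is bookkeeping.

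It then remains to unpack this identity through the block structure \eqref{eq:expanded_params}. Writing $\bm\eta_{i\ell}=[\bm\rho_{i\ell}^\top\;\bm\sigma_{i\ell}^\top]^\top$ with $\bm\sigma_{i\ell}\in\RR_+^J$, the top $N_2$ rows give $\bm Q\bm\chi_{i\ell}+\alpha_{i\ell}\bm q=\bm W^\top\bm\rho_{i\ell}$, while the bottom $J$ rows give $\bm\sigma_{i\ell}=\alpha_{i\ell}\bm t-\bm S\bm\chi_{i\ell}\geq\bm 0$, that is, $\bm S\bm\chi_{i\ell}\leq\alpha_{i\ell}\bm t$. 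For $\ell\in\mathcal L_i^+$ I would divide by $\alpha_{i\ell}>0$ to recover $\bm Q(\bm\chi_{i\ell}/\alpha_{i\ell})+\bm q=\bm W^\top(\bm\rho_{i\ell}/\alpha_{i\ell})$ and, combining $\bm\chi_{i\ell}/\alpha_{i\ell}\geq\bm 0$ with $\bm S(\bm\chi_{i\ell}/\alpha_{i\ell})\leq\bm t$, conclude $\bm\chi_{i\ell}/\alpha_{i\ell}\in\Xi$. For $\ell\in\mathcal L_i^0$ the choice $\alpha_{i\ell}=0$ reduces the identities to $\bm Q\bm\chi_{i\ell}=\bm W^\top\bm\rho_{i\ell}$ and $\bm S\bm\chi_{i\ell}\leq\bm 0$, so that $\bm\chi_{i\ell}\in\mathrm{recc}(\Xi)$, which establishes the claim.
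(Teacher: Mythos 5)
Your proof is correct and follows essentially the same route as the paper's: both derive the linear and quadratic identities in the quantities $\Wb_{:j}^\top\bm\eta_{i\ell}-\Qb_{j:}^\top\bm\chi_{i\ell}$, invoke the normalization $\sum_{\ell}\alpha_{i\ell}^2=1$, and use tightness of Cauchy--Schwarz to force proportionality to $(\alpha_{i\ell})_\ell$. You are somewhat more explicit than the paper in unpacking the block structure of $\Qb$, $\rb$, $\Wb$ to verify the membership claims $\bm\chi_{i\ell}/\alpha_{i\ell}\in\Xi$ and $\bm\chi_{i\ell}\in\textup{recc}(\Xi)$, a detail the paper leaves implicit; this is a welcome addition rather than a deviation.
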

\begin{proof}
We substitute the decomposition \eqref{eq:CPP_decomposition}  into the constraints of problem \eqref{eq:CPP} to obtain
\begin{equation}
\label{eq:linear_term}
 \displaystyle\sum_{\ell\in\mathcal L_i}\alpha_{i\ell}\left(\Wb_{:j}^\top\bm\eta_{i\ell}-\Qb_{j:}^\top\bm\chi_{i\ell}\right)={\rbl}_j
\end{equation}
and
\begin{equation}
	\label{eq:quadratic_term}
	\displaystyle\sum_{\ell\in\mathcal L_i}\left(\Wb_{:j}^\top\bm\eta_{i\ell}-\Qb_{j:}^\top\bm\chi_{i\ell}\right)^2={\rbl}_j^2
\end{equation}
for every $j\in[N_2+J]$. 
Squaring \eqref{eq:linear_term} and eliminating ${\rbl}_j^2$  by using \eqref{eq:quadratic_term} yields
\begin{equation*}
	\left(\sum_{\ell\in\mathcal L_i}\alpha_{i\ell}\left(\Wb_{:j}^\top\bm\eta_{i\ell}-\Qb_{j:}^\top\bm\chi_{i\ell}\right)\right)^2=\displaystyle\sum_{\ell\in\mathcal L_i}\left(\Wb_{:j}^\top\bm\eta_{i\ell}-\Qb_{j:}^\top\bm\chi_{i\ell}\right)^2=\displaystyle\left(\sum_{\ell\in\mathcal L_i}\alpha_{i\ell}^2\right)\sum_{\ell\in\mathcal L_i}\left(\Wb_{:j}^\top\bm\eta_{i\ell}-\Qb_{j:}^\top\bm\chi_{i\ell}\right)^2.
\end{equation*}
Here, the second equality follows from the fact that $\sum_{\ell\in\mathcal L}\alpha_{i\ell}^2=1$.
The tightness condition of the Cauchy-Schwartz inequality therefore implies that there exists $\tau\in\mathbb R$ with
\begin{equation}
	\label{eq:CS_equality}
	\Wb_{:j}^\top\bm\eta_{i\ell}-\Qb_{j:}^\top\bm\chi_{i\ell}=\tau\alpha_{i\ell}\qquad\forall\ell\in\mathcal L_i.
\end{equation}
Thus, we have
\begin{equation*}
	\Wb_{:j}^\top\bm\eta_{i\ell}-\Qb_{j:}^\top\bm\chi_{i\ell}=0\qquad\forall\ell\in\mathcal L_i^0,
\end{equation*}
which confirms the second claim.
Next, we observe from \eqref{eq:linear_term} and \eqref{eq:CS_equality} that 
\begin{equation}
\displaystyle{\rbl}_j=\sum_{\ell\in\mathcal L_i}\alpha_{i\ell}\left(\Wb_{:j}^\top\bm\eta_{i\ell}-\Qb_{j:}^\top\bm\chi_{i\ell}\right)=\sum_{\ell\in\mathcal L_i}\tau\alpha_{i\ell}^2=\tau. 
\end{equation}
Replacing $\tau$ with $\rbl_j$ in \eqref{eq:CS_equality} and using the fact that $\alpha_{i\ell}>0$ for $\ell\in\mathcal L_i^+$ yields 
\begin{equation*}
\Wb_{:j}^\top(\bm\eta_{i\ell}/\alpha_{i\ell})-\Qb_{j:}^\top(\bm\chi_{i\ell}/\alpha_{i\ell})=\rbl_j\qquad\forall\ell\in\mathcal L_i^+,
\end{equation*}
which establishes the first claim. 
\end{proof}

\subsection{A Copositive Reformulation of Problem \eqref{eq:DRSP}}
So far, we have seen that $\mathcal Z(\bm x)=\Zunder(\bm x)\leq\Zover(\bm x)$.
Unfortunately, as we exemplify below, the duality gap between $\Zunder(\bm x)$ and $\Zover(\bm x)$ can be strictly positive. 

	\begin{ex}[Infinite Duality Gap]
		\label{ex:infinite_gap}
		Consider the following pair of primal and dual recourse problems
		\begin{equation*}
			Z(x,\xi)=\inf_{y\in\RR}\{(\xi-1)y:\xi-1\leq 0y\}\;\text{ and }\;Z_d(x,\xi)=\sup_{p\in\RR_+}\{(\xi-1)p:\xi-1= 0p\},
		\end{equation*}
		respectively, and set $\Xi=\{\xi\in\RR_+:\xi\leq 1,\;-\xi\leq -1\}=\{1\}$.
Assume that there is only one sample $\hat\xi_1=1$ and that the Wasserstein radius is set to $\epsilon=1$. 
Note that both linear programs are feasible with the same optimal value $0$ for ${\xi}=1$. Theorem \ref{thm:exact_CPP} therefore implies that $\Zunder(\bm x)=\mathcal Z(\bm x)=0$. 

Under the current setting, the extended recourse parameters defined in \eqref{eq:expanded_params} are given by 
\begin{equation*}
	\Qb=\begin{bmatrix}1\\1\\-1\end{bmatrix},\;\rb=\begin{bmatrix} -1\\-1\\1\end{bmatrix},\;
	\Tb(\bm x)=\begin{bmatrix}
		1\\0\\0
	\end{bmatrix},\;\Wb=\begin{bmatrix}
	0&0&0\\0&-1&0\\0&0&-1
\end{bmatrix},\;\text{ and }\;\hb(\bm x)=\begin{bmatrix}
-1\\ 0\\0
\end{bmatrix}.
\end{equation*} 
Hence, the copositive program \eqref{eq:CP} simplifies to
	\begin{equation}
		\label{eq:CP_infinite_gap}
		\begin{array}{rc@{\quad}l@{\quad}l}
			\Zover(\bm x)=&\displaystyle\inf_{} & \displaystyle s-\psi_1-\psi_2+\psi_3+\phi_{1}+\phi_{2}+\phi_{3}\\
			&\st &\displaystyle \lambda\in\mathbb R_+,\; s\in\mathbb R,\;\bm\psi,\bm\phi\in\RR^{3}\\
			&&\displaystyle	\begin{bmatrix}
				\displaystyle\lambda+\phi_1+\phi_2+\phi_3&-\frac{1}{2}&\phi_2&-\phi_3&-\lambda-\frac{\psi_1+\psi_2-\psi_3}{2}\\
				-\frac{1}{2} & 0 & 0 & 0 & \frac{1}{2}\\
				\phi_2 & 0 & \phi_2 & 0 & -\frac{\psi_2}{2}\\
				-\phi_3 & 0 & 0 & \phi_3 & -\frac{\psi_3}{2}\\
				-\lambda-\frac{\psi_1+\psi_2-\psi_3}{2} & \frac{1}{2} & -\frac{\psi_2}{2}&-\frac{\psi_3}{2} & s
			\end{bmatrix}\succeq_{\mathcal C}\bm 0.
		\end{array}
	\end{equation}
	However, multiplying the copositive constraint from both sides with the vector $[\theta\;1\;0\;0\;0]^\top$, $\theta\geq 0$, implies that $(\lambda+\phi_1+\phi_2+\phi_3)\theta^2-\theta\geq 0$ for all $\theta\geq 0$. As no values of $\lambda$, $\phi_1$, $\phi_2$ and  $\phi_3$ can satisfy this inequality, problem \eqref{eq:CP_infinite_gap} is infeasible, \ie,~$\Zover(\bm x)=+\infty$. Thus, there is an infinite  duality gap between $\Zunder(\bm x)$ and $\Zover(\bm x)$.
	\end{ex}

Even though $\Zunder(\bm x)$ and $\Zover(\bm x)$ may differ, one can prove that $\Zunder(\bm x)=\mathcal Z(\bm x)=\Zover(\bm x)$ if the two-stage distributionally robust linear program \eqref{eq:DRSP} has complete recourse. To show this, we first prove two lemmas.
\begin{lem}
	\label{lem:complete_recourse_WW}
	If Problem \eqref{eq:DRSP} has complete recourse, then $\bm W\bm W^\top\succ_{\mathcal C}\bm 0$.
\end{lem}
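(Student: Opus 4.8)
The plan is to unpack what the strict relation $\bm W\bm W^\top \succ_{\mathcal C} \bm 0$ actually demands and then to verify it directly from the complete recourse vector $\bm y^+$. Recall that $\bm W\bm W^\top \succ_{\mathcal C}\bm 0$ means that $\bm W\bm W^\top$ lies in the interior of the copositive cone, which—by a standard compactness argument over the portion of the unit sphere lying in the nonnegative orthant—coincides with the set of strictly copositive matrices. Hence it suffices to show that $\bm\xi^\top \bm W\bm W^\top \bm\xi > 0$ for every $\bm\xi\in\RR_+^M\setminus\{\bm 0\}$.

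First I would rewrite the quadratic form as $\bm\xi^\top \bm W\bm W^\top \bm\xi = \|\bm W^\top\bm\xi\|_2^2$, which is manifestly nonnegative, so that $\bm W\bm W^\top$ is at least copositive (indeed positive semidefinite). The only way strict positivity can fail is if there exists some nonzero $\bm\xi\in\RR_+^M$ with $\bm W^\top\bm\xi=\bm 0$. The crux is therefore to rule out such a vector using the complete recourse assumption.

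To do this, suppose for contradiction that $\bm\xi\geq\bm 0$, $\bm\xi\neq\bm 0$, satisfies $\bm W^\top\bm\xi=\bm 0$. Taking the inner product with the complete recourse vector $\bm y^+$ gives $0=(\bm y^+)^\top\bm W^\top\bm\xi=(\bm W\bm y^+)^\top\bm\xi$. Since complete recourse guarantees $\bm W\bm y^+>\bm 0$ componentwise while $\bm\xi\geq\bm 0$ is nonzero, the right-hand side is a sum of nonnegative terms at least one of which is strictly positive, so $(\bm W\bm y^+)^\top\bm\xi>0$---a contradiction. Consequently $\bm W^\top\bm\xi\neq\bm 0$ for every nonzero $\bm\xi\geq\bm 0$, and thus $\|\bm W^\top\bm\xi\|_2^2>0$, which is exactly the desired strict copositivity.

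I expect the only subtle point to be the identification of $\succ_{\mathcal C}$ with strict copositivity, i.e., that the interior of the copositive cone is precisely the cone of strictly copositive matrices; the algebraic contradiction itself is immediate once the complete recourse vector $\bm y^+$ is brought into play.
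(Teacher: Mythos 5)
Your proof is correct, but it reaches the key fact by a different route than the paper. Both arguments boil down to showing that $\bm W^\top\bm\xi\neq\bm 0$ for every nonzero $\bm\xi\in\RR_+^M$, whence $\bm\xi^\top\bm W\bm W^\top\bm\xi=\|\bm W^\top\bm\xi\|_2^2>0$ and strict copositivity follows. The paper obtains this via linear programming duality: it recasts complete recourse as the unboundedness of the auxiliary LP $\max\{z:\bm W\bm y\geq z\mathbf e\}$, deduces from weak duality that its dual $\{\bm\lambda\geq\bm 0:\mathbf e^\top\bm\lambda=1,\ \bm W^\top\bm\lambda=\bm 0\}$ is infeasible, and rescales. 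You instead use the recourse vector $\bm y^+$ directly: if $\bm W^\top\bm\xi=\bm 0$ for some nonzero $\bm\xi\geq\bm 0$, then $0=(\bm y^+)^\top\bm W^\top\bm\xi=(\bm W\bm y^+)^\top\bm\xi>0$, a contradiction since $\bm W\bm y^+>\bm 0$ componentwise. Your argument is more elementary---it avoids introducing the auxiliary LP and invoking duality altogether---while the paper's version makes the equivalence between complete recourse and infeasibility of $\{\bm\lambda\geq\bm 0:\mathbf e^\top\bm\lambda=1,\ \bm W^\top\bm\lambda=\bm 0\}$ explicit, which is a reusable characterization. Both proofs share the final step of identifying $\succ_{\mathcal C}\bm 0$ with strict copositivity (equivalently, membership in the interior of $\mathcal C$), which you correctly justify by the standard compactness argument on the intersection of the unit sphere with the nonnegative orthant; the paper asserts the same identification without elaboration.
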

\begin{proof}
	The complete recourse property is equivalent to the unboundedness of the linear program
\begin{equation*}
	\begin{array}{l@{\quad}l@{\quad}l}
		\maximize  &\displaystyle z\\
		\subjectto & \displaystyle \bm y\in\RR^{N_2},\;z\in\RR\\
		&\displaystyle \bm W\bm y\geq z\mathbf e,
	\end{array}
\end{equation*}	
whose dual linear program is given by
\begin{equation*}
	\label{eq:dual_LP_complete_recourse}
	\begin{array}{l@{\quad}l@{\quad}l}
		\minimize  &\displaystyle 0\\
		\subjectto & \displaystyle \bm\lambda\in\RR_+^M\\
		&\displaystyle \mathbf e^\top\bm\lambda=1,\;\bm W^\top\bm\lambda=\bm 0.
	\end{array}
\end{equation*}	
As the primal problem is unbounded, the dual problem is infeasible by weak duality, implying that $\bm W^\top\bm\lambda\neq\bm 0$ for all $\bm\lambda\in\RR_+^M$ such that $\mathbf e^\top\bm\lambda=1$. By rescaling, we thus have $\bm\lambda^\top\bm W\bm W^\top\bm\lambda> 0$  for all $\bm\lambda\in\RR_+^M$ such that $\bm\lambda\neq\bm 0$. This implies that
$\bm W\bm W^\top$ lies in the interior of the copositive cone $\mathcal C$.
\end{proof}
\begin{lem}[Copositive Schur Complements]
	\label{lem:M_copositive_interior}
	Consider the symmetric matrix
	\begin{equation*}
		\bm M=
		\begin{bmatrix}
			\bm A & \bm B \\
			 \bm B^\top & \bm C
		\end{bmatrix},
	\end{equation*}
	with $\bm A\succ \bm 0$. We then have $\bm M\succ_{\mathcal C} \bm 0$ if $\bm C-\bm B^\top\bm A^{-1}\bm B\succ_{\mathcal C}\bm 0$. 
\end{lem}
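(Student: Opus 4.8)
The plan is to reduce the assertion to the elementary characterization of the interior of the copositive cone, namely that $\bm N\succ_{\mathcal C}\bm 0$ holds if and only if $\bm N$ is \emph{strictly} copositive, i.e.\ $\bm\zeta^\top\bm N\bm\zeta>0$ for every $\bm\zeta\geq\bm 0$ with $\bm\zeta\neq\bm 0$. This is exactly the property already exploited at the end of the proof of Lemma~\ref{lem:complete_recourse_WW}, so I would take it as given. Accordingly, I would fix an arbitrary $\bm\xi=[\bm u^\top\;\bm v^\top]^\top\geq\bm 0$ with $\bm\xi\neq\bm 0$, partitioned conformably with the blocks of $\bm M$ (so that $\bm u\geq\bm 0$ and $\bm v\geq\bm 0$), and set out to prove $\bm\xi^\top\bm M\bm\xi>0$.

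The central computation is the Schur-complement completion of squares
\[
\bm\xi^\top\bm M\bm\xi=\bm u^\top\bm A\bm u+2\bm u^\top\bm B\bm v+\bm v^\top\bm C\bm v=(\bm u+\bm A^{-1}\bm B\bm v)^\top\bm A\,(\bm u+\bm A^{-1}\bm B\bm v)+\bm v^\top(\bm C-\bm B^\top\bm A^{-1}\bm B)\bm v,
\]
which is legitimate because $\bm A\succ\bm 0$ is invertible. The first summand is nonnegative since $\bm A\succ\bm 0$, and the second summand involves the Schur complement $\bm C-\bm B^\top\bm A^{-1}\bm B$, which is strictly copositive by hypothesis.

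I would then conclude by a short case distinction on $\bm v$. If $\bm v\neq\bm 0$, then $\bm v\geq\bm 0$ together with strict copositivity of the Schur complement forces the second summand to be strictly positive, while the first summand is nonnegative; hence $\bm\xi^\top\bm M\bm\xi>0$. If $\bm v=\bm 0$, then $\bm\xi\neq\bm 0$ forces $\bm u\neq\bm 0$, and positive definiteness of $\bm A$ gives $\bm\xi^\top\bm M\bm\xi=\bm u^\top\bm A\bm u>0$. Either way the quadratic form is strictly positive, which establishes $\bm M\succ_{\mathcal C}\bm 0$.

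The argument is almost entirely routine; the only point that requires care---and the place where the two hypotheses must be used separately---is the boundary case $\bm v=\bm 0$. Strict copositivity of the Schur complement says nothing when $\bm v$ vanishes, since it only controls nonzero nonnegative $\bm v$, so the strict positive definiteness of $\bm A$ is indispensable precisely there. I therefore expect no genuine obstacle beyond handling this boundary case cleanly and invoking the interior-of-$\mathcal C$ characterization correctly.
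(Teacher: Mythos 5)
Your proof is correct and follows essentially the same route as the paper's: the identical Schur-complement completion of squares followed by the case distinction on whether the second block of the test vector vanishes. The only cosmetic difference is that the paper normalizes the test vector to have entries summing to one and then rescales at the end, whereas you work directly with an arbitrary nonzero nonnegative vector; the substance is the same.
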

\begin{proof}
	Multiplying the matrix $\bm M$ from both sides with a non-negative vector $[\bm\xi^\top\;\bm\rho^\top]^\top\in\RR_+^{K+M}$ satisfying~{\color{black}$\mathbf e^\top\bm\xi+\mathbf e^\top\bm\rho=1$}, we obtain
\begin{equation*}
	\begin{array}{rll}
	[\bm\xi^\top\;\bm\rho^\top]\bm M[\bm\xi^\top\;\bm\rho^\top]^\top&=&\displaystyle\bm\xi^\top\bm A\bm\xi+2\bm\xi^\top\bm B\bm\rho+\bm\rho^\top\bm C\bm\rho\\
	&=&\displaystyle(\bm\xi+\bm A^{-1}\bm B\bm\rho)^\top\bm A(\bm\xi+\bm A^{-1}\bm B\bm\rho)+\bm\rho^\top(\bm C-\bm B^\top\bm A^{-1}\bm B)\bm\rho.
	\end{array}
\end{equation*}	
Since $\bm A\succ\bm 0$, the term $(\bm\xi+\bm A^{-1}\bm B\bm\rho)^\top\bm A(\bm\xi+\bm A^{-1}\bm B\bm\rho)$ is non-negative.  If $\bm\rho=\bm 0$, then we have $\mathbf e^\top\bm\xi=1$, which implies that this term is positive.  If $\bm\rho\neq\bm 0$, then the assumption $\bm C-\bm B^\top\bm A^{-1}\bm B\succ_{\mathcal C}\bm 0$ implies that the term $\bm\rho^\top(\bm C-\bm B^\top\bm A^{-1}\bm B)\bm\rho$ is positive. In both cases, by rescaling we find that $[\bm\xi^\top\;\bm\rho^\top]\bm M[\bm\xi^\top\;\bm\rho^\top]^\top>0$ for all $\bm\xi\in\RR_+^K$ and all $\bm\rho\in\RR_+^M$ such that $[\bm\xi^\top\;\bm\rho^\top]^\top\neq\bm 0$. Hence, $\bm M\succ_{\mathcal C}\bm 0$. 
\end{proof}
Lemmas \ref{lem:complete_recourse_WW} and \ref{lem:M_copositive_interior} enable us to prove the following exactness result. 
\begin{thm}
	\label{prop:complete_recourse}
	If problem \eqref{eq:DRSP} has complete recourse, then $\mathcal Z(\bm x)=\Zunder(\bm x)=\Zover(\bm x)$ for any fixed $\bm x\in\mathcal X$. 
\end{thm}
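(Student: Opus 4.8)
The plan is to establish the single missing inequality $\Zover(\bm x)\le\Zunder(\bm x)$. Theorems~\ref{thm:CP} and~\ref{thm:exact_CPP} already give $\mathcal Z(\bm x)=\Zunder(\bm x)\le\Zover(\bm x)$, so this closes the chain. By Proposition~\ref{prop:CPP} the programs~\eqref{eq:CP} and~\eqref{eq:CPP} form a conic primal--dual pair with respect to $\mathcal C$ and its dual cone $\mathcal C^*$, and weak duality is exactly $\Zunder(\bm x)\le\Zover(\bm x)$. Hence the reverse inequality is the assertion that strong duality holds with no gap. The route I would take is a Slater-type constraint qualification: under complete recourse I would exhibit a strictly feasible point of the copositive minimization problem~\eqref{eq:CP}, that is, a choice of $(\lambda,s_i,\bm\psi_i,\bm\phi_i)$ for which every constraint matrix $\bm M_i$ in~\eqref{eq:CP} lies in the \emph{interior} of the copositive cone, $\bm M_i\succ_{\mathcal C}\bm 0$. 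Standard conic duality then forces the completely positive program~\eqref{eq:CPP} to be solvable with $\Zover(\bm x)=\Zunder(\bm x)$, and the theorem follows.

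The whole content is therefore the explicit construction of such a Slater point. Since $\bm\psi_i$ is a free variable and $\bm\phi_i$ is unconstrained in sign, I am free to set $\bm\psi_i=\bm 0$ and $\bm\phi_i=\nu\mathbf e$ for a large scalar $\nu>0$, and to take $\lambda>0$ and $s_i$ large. With these choices the $(1,1)$ block of $\bm M_i$ equals $\lambda\mathbb I+\Qb^\top\diag(\bm\phi_i)\Qb$, which is positive \emph{definite} because $\lambda>0$ and $\bm\phi_i\ge\bm 0$. This is precisely the hypothesis of the copositive Schur-complement Lemma~\ref{lem:M_copositive_interior}: taking $\bm A$ to be this $(1,1)$ block, it suffices to prove that the Schur complement of $\bm M_i$ with respect to $\bm A$—a symmetric matrix indexed by $(\thetab,t)$—is strictly copositive. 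Writing the quadratic form $[\bm\xi^\top\,\thetab^\top\,t]\,\bm M_i\,[\bm\xi^\top\,\thetab^\top\,t]^\top$ and collecting the $\bm\phi_i$-terms into the single penalty $\nu\|\Qb\bm\xi-\Wb^\top\thetab\|_2^2$, this reduces to showing the form is strictly positive for every nonzero $(\bm\xi,\thetab,t)\ge\bm 0$.

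Here the complete-recourse hypothesis enters through Lemma~\ref{lem:complete_recourse_WW}. Since $\Wb\Wb^\top=\diag(\bm W\bm W^\top,\mathbb I)$ and $\bm W\bm W^\top\succ_{\mathcal C}\bm 0$, we get $\Wb\Wb^\top\succ_{\mathcal C}\bm 0$, so that $\thetab^\top\Wb\Wb^\top\thetab\ge\delta\|\thetab\|_2^2$ for some $\delta>0$ and all $\thetab\ge\bm 0$ (the minimum of a positive continuous function over the compact set $\{\thetab\ge\bm 0:\|\thetab\|_2=1\}$). In particular $\Wb^\top\thetab\neq\bm 0$ whenever $\bm 0\neq\thetab\ge\bm 0$, so $\thetab$ cannot grow without being penalized quadratically through $\nu\|\Qb\bm\xi-\Wb^\top\thetab\|_2^2$; this is exactly the mechanism absent in Example~\ref{ex:infinite_gap}, where complete recourse fails and the gap is infinite. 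The remaining, genuinely delicate, step is a compactness argument showing that $\lambda$, $\nu$ and $s_i$ can be chosen \emph{simultaneously} large enough to dominate all indefinite cross terms uniformly over the noncompact nonnegative orthant, and I expect this to be the main obstacle. On the face $t=0$ the penalty forces $\Qb\bm\xi=\Wb^\top\thetab$, and on this set $\thetab$ is controlled by $\bm\xi$ via $\delta$, so a large enough $\lambda$ renders the form strictly positive; on the region $t>0$ the term $s_it^2$ dominates once $s_i$ is large, with the transition near $t=0$ handled by continuity on the unit sphere. Assembling these estimates gives $\bm M_i\succ_{\mathcal C}\bm 0$, the desired Slater point, whence $\Zover(\bm x)=\Zunder(\bm x)=\mathcal Z(\bm x)$.
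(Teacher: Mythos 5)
Your overall strategy coincides with the paper's: reduce the theorem to strong duality between \eqref{eq:CP} and \eqref{eq:CPP}, and obtain it by exhibiting a Slater point with $\bm\psi_i=\bm 0$, $\bm\phi_i$ a nonnegative multiple of $\mathbf e$, and $\lambda$, $s_i$ large, using Lemma~\ref{lem:complete_recourse_WW} to get $\Wb\Wb^\top\succ_{\mathcal C}\bm 0$ and Lemma~\ref{lem:M_copositive_interior} to lift strict copositivity to the full constraint matrix. The gap is exactly the step you flag yourself as ``the main obstacle'': after taking the Schur complement with respect to the $\bm\xi$-block $\lambda\mathbb I+\nu\Qb^\top\Qb$ alone, you must verify that the resulting $(\thetab,t)$-indexed matrix is strictly copositive for a suitable \emph{simultaneous} choice of $\nu$, $\lambda$ and $s_i$, and your sketch does not do this. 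Two specific problems: (a) the off-diagonal block $\bm B$ of the constraint matrix contains $-\nu\Qb^\top\Wb^\top$ and $-\lambda\xih_i$, so the subtracted term $\bm B^\top\bm A^{-1}\bm B$ in Lemma~\ref{lem:M_copositive_interior} \emph{grows} with $\nu$ and $\lambda$ rather than vanishing; closing the estimate requires, e.g., the bound $\Qb(\lambda\mathbb I+\nu\Qb^\top\Qb)^{-1}\Qb^\top\prec\nu^{-1}\mathbb I$ and a careful ordering of the three parameters, none of which appears. (b) The heuristic ``on the face $t=0$ the penalty forces $\Qb\bm\xi=\Wb^\top\thetab$'' is not correct as stated---the penalty only discourages deviation---so the case analysis on the sphere is not actually established.

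The paper closes precisely this gap by a different bookkeeping that makes the verification a two-line perturbation argument. With $\bm\phi_i=\mathbf e$ it splits the constraint matrix additively as in \eqref{eq:constraint_decomposition}: one summand collects the sample-dependent column $-\lambda\xih_i$, the block $\Qb^\top\Qb$ and most of $s_i$, and is handled by taking $s_i$ large so that it is positive semidefinite; the other summand has diagonal block $\lambda\mathbb I$ on the \emph{combined} $(\bm\xi,t)$ indices and off-diagonal blocks that are independent of $\lambda$ and $s_i$. Applying Lemma~\ref{lem:M_copositive_interior} with $\bm A=\lambda\mathbb I$ to that second summand, the condition reduces to $\diag(\bm W\bm W^\top,\mathbb I)\succ_{\mathcal C}\tfrac{1}{\lambda}\bm B^\top\bm B$ with $\bm B$ fixed, which holds for large $\lambda$ simply because the left-hand side lies in the interior of $\mathcal C$ and the perturbation tends to zero; since the sum of an interior point of $\mathcal C$ and a positive semidefinite matrix stays in the interior, the Slater point follows with no uniform-domination analysis over the orthant. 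To complete your proof you would either have to carry out the full three-parameter estimate for your partition, or adopt the paper's decomposition, which is designed so that $\lambda$ and $s_i$ never appear in the off-diagonal blocks of the matrix to which the Schur-complement lemma is applied.
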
	
\begin{proof}
	We already know from Theorem  \ref{thm:exact_CPP} that $\mathcal Z(\bm x)=\Zunder(\bm x)$. To show that $\Zover(\bm x)=\Zunder(\bm x)$, it suffices to prove strong duality between problems \eqref{eq:CP} and \eqref{eq:CPP}. Specifically, we will construct a Slater point $(\lambda^{\rm s},(s_i^{\rm s},\bm{\phi}_i^{\rm s},\bm{\psi}_i^{\rm s})_{i\in[I]})$ for problem \eqref{eq:CP}. To this end, we first set $\bm\phi_i^{\rm s}=\mathbf e$ and $\bm\psi_i^{\rm s}=\bm 0$ for all $i\in[I]$. Using this solution, the $i$-th constraint matrix in \eqref{eq:CP} can be decomposed as
	\begin{equation}
		\label{eq:constraint_decomposition}
		\begin{array}{rc@{\quad}l@{\quad}l}
			&&\displaystyle\begin{bmatrix}
			\lambda^{\rm s}\mathbb I&\displaystyle-\frac{1}{2}\bm T(\bm x)^\top-\bm Q^\top\bm W^\top&\bm S^\top &\bm 0\\
			\displaystyle-\frac{1}{2}\bm T(\bm x)-\bm W\bm Q&\displaystyle \bm W\bm W^\top &\bm 0&\displaystyle -\frac{1}{2}\bm h(\bm x)\\\displaystyle
			\displaystyle\bm S&\displaystyle \bm 0&\mathbb I&\displaystyle \bm 0\\\displaystyle
			\bm 0^\top &\displaystyle-\frac{1}{2}\bm h(\bm x)^\top& \bm 0^\top & \displaystyle \lambda^{\rm s}
			\end{bmatrix}+\begin{bmatrix}
			\Qb^\top\Qb&\bm 0&\bm 0 &\displaystyle-\lambda^{\rm s}\xih_i\\
			\displaystyle\bm 0&\bm 0&\bm 0&\bm 0\\
			\displaystyle\bm 0&\displaystyle \bm 0&\bm 0&\displaystyle \bm 0\\\displaystyle
			\displaystyle
			-\lambda^{\rm s}\xih_i^\top  &\bm 0^\top&\bm 0^\top& s_i^{\rm s}-\lambda^{\rm s}
		\end{bmatrix}.
		\end{array}
	\end{equation}
	Next, we select  $s_i^{\rm s}$ large enough to ensure that the right matrix in \eqref{eq:constraint_decomposition} is positive semidefinite. In this case, a Slater point can be obtained by ensuring that the left matrix is strictly copositive. As problem \eqref{eq:DRSP} has complete recourse, Lemma \ref{lem:complete_recourse_WW} is applicable and implies that 
	\begin{equation*}
		\begin{bmatrix}
\displaystyle \bm W\bm W^\top &\bm 0\\
\displaystyle \bm 0&\mathbb I
		\end{bmatrix}\succ_{\mathcal C} \bm 0.
	\end{equation*}
	Moreover Lemma \ref{lem:M_copositive_interior} implies that the left matrix in \eqref{eq:constraint_decomposition} is strictly copositive if 
		\begin{equation*}
			\begin{bmatrix}
				\displaystyle \bm W\bm W^\top &\bm 0\\
				\displaystyle \bm 0&\mathbb I
			\end{bmatrix}\succ_{\mathcal C}\frac{1}{\lambda^{\mathrm s}}\begin{bmatrix}
			\displaystyle-\frac{1}{2}\bm T(\bm x)^\top+\bm Q^\top\bm W^\top&\bm S^\top \\
\displaystyle -\frac{1}{2}\bm h(\bm x)^\top& \bm 0^\top
		\end{bmatrix}^\top\begin{bmatrix}
		\displaystyle-\frac{1}{2}\bm T(\bm x)^\top+\bm Q^\top\bm W^\top&\bm S^\top \\
		\displaystyle -\frac{1}{2}\bm h(\bm x)^\top& \bm 0^\top
	\end{bmatrix},
		\end{equation*}
		which is true whenever $\lambda^{\rm s}$ is sufficiently large. We have therefore constructed a strictly feasible solution to~\eqref{eq:CP}. Thus, $\mathcal \Zover(\bm x)=\Zunder(\bm x)=Z(\bm x)$ by strong conic duality. 
\end{proof}
Theorem \ref{prop:complete_recourse} implies that if problem \eqref{eq:DRSP} has complete recourse then it is equivalent to the copositive minimization problem obtained by replacing $\mathcal Z(\bm x)$ in \eqref{eq:DRSP} with $\Zover(\bm x)$. Conversely, if problem \eqref{eq:DRSP} fails to have complete recourse, it may only be possible to approximate $\mathcal Z(\bm x)$ by the optimal value of a copositive minimization problem. 
To show this, we construct a relaxation of problem \eqref{eq:CP} parameterized by $\delta\geq 0$.
\begin{equation}
	\label{eq:CP_delta}
	\begin{array}{rc@{\quad}l@{\quad}l}
		&\Zover_\delta(\bm x)\\
		=&\displaystyle\inf_{} & \displaystyle\epsilon^2\lambda+ \frac{1}{I}\sum_{i\in[I]}\left[s_i+\rb^\top\bm\psi_i-\lambda\|\xih_i\|_2^2+\sum_{j\in[N_2+J]}\phi_{ij}{\rbl}^2_j\right]\\
		&\st &\displaystyle \lambda\in\mathbb R_+,\; s_i\in\mathbb R,\;\bm\psi_i,\bm\phi_i\in\RR^{N_2+J}&\forall i\in[I]\\
		&&\displaystyle\begin{bmatrix}
			\displaystyle\lambda\mathbb I+\Qb^\top\diag(\bm\phi_i)\Qb&\displaystyle-\frac{1}{2}\Tb(\bm x)^\top-\Qb^\top\diag(\bm\phi_i)\Wb^\top&\displaystyle-\lambda\xih_i-\frac{1}{2}\Qb^\top\bm\psi_i\\
			\displaystyle-\frac{1}{2}\Tb(\bm x)-\Wb\diag(\bm\phi_i)\Qb&\displaystyle \Wb\diag(\bm\phi_i)\Wb^\top+\delta\mathbb I&\displaystyle \frac{1}{2}(\Wb\bm\psi_i-\hb(\bm x))\\\displaystyle
			(-\lambda\xih_i-\frac{1}{2}\bm Q^\top\bm\psi_i)^\top &\displaystyle\frac{1}{2}(\Wb\bm\psi_i-\hb(\bm x))^\top& \displaystyle s_i
		\end{bmatrix}\succeq_{\mathcal C}\bm 0&\forall i\in[I]
	\end{array}
\end{equation}
Note that $\delta$ only affects the middle block of the copositive matrix. One can further show that the completely positive program dual to \eqref{eq:CP_delta} constitutes a restriction of problem~\eqref{eq:CPP} with a perturbed objective function.
	\begin{equation}
		\label{eq:CPP_delta}
		\begin{array}{lc@{\quad}l@{\quad}l}
			\Zunder_\delta(\bm x)=&\sup  &\displaystyle\frac{1}{I}\sum_{i\in[I]}\left[\tr(\Tb(\bm x)\bm Y_i)+\hb(\bm x)^\top \bm\gamma_i-\delta\tr(\bm\Gamma_i) \right]\\
			&\st & \displaystyle\bm \gamma_i\in\mathbb R^{M+J}_+,\;\bm\mu_i\in\mathbb R_+^K,\;\bm\Gamma_i\in\mathbb S_+^{M+J},\;\bm\Omega_i\in\mathbb S_+^K,\;\bm Y_i\in\mathbb R^{K\times(M+J)}&\forall i\in[I]\\
			&& \displaystyle\Qb\bm\mu_i+{\rb}=\Wb^\top\bm\gamma_i&\forall i\in[I]\\
			&&\displaystyle\Qb_{j:}^\top\bm\Omega_i\Qb_{j:}-2\Qb_{j:}^\top\bm Y_i\Wb_{:j}+\Wb_{:j}^\top\bm\Gamma_i\Wb_{:j}={\rbl}_j^2&\forall i\in[I]\;\forall j\in[N_2+J]\\
			&&\displaystyle\frac{1}{I}\sum_{i\in[I]}\tr(\bm \Omega_i)-2\xih_i^\top\bm\mu_i+\xih_i^\top\xih_i\leq\epsilon^2\\
			&&\begin{bmatrix}
				\bm\Omega_i&\bm Y_i&\bm\mu_i\\
				\bm Y_i^\top & \bm\Gamma_i & \bm\gamma_i\\
				\bm\mu_i^\top &\bm\gamma_i^\top& 1
			\end{bmatrix}\succeq_{\mathcal C^*}\bm 0&\forall i\in[I]
		\end{array}
	\end{equation}
	Observe that $\delta$ only affects the objective function of this dual problem.
\begin{prop}
	\label{prop:relaxation}
	For any fixed $\bm x\in\mathcal X$, $\Zover_\delta(\bm x)=\Zunder_\delta(\bm x)$ is finite for all $\delta>0$, and $\lim_{\delta\downarrow 0}\Zover_\delta(\bm x)=\mathcal Z(\bm x)$. 
\end{prop}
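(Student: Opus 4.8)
The plan is to prove the two assertions separately, both resting on the identity $\mathcal Z(\bm x)=\Zunder(\bm x)$ from Theorem~\ref{thm:exact_CPP} and on the structural observation that \eqref{eq:CPP_delta} shares its feasible set with \eqref{eq:CPP} but carries the penalized objective. Since the constraint $\bm\Gamma_i\in\mathbb S_+^{M+J}$ forces $\tr(\bm\Gamma_i)\ge 0$, the penalty $\tfrac{\delta}{I}\sum_{i\in[I]}\tr(\bm\Gamma_i)$ is nonnegative, whence $\Zunder_\delta(\bm x)\le\Zunder(\bm x)=\mathcal Z(\bm x)$ for every $\delta\ge 0$; symmetrically, adding $\delta\mathbb I$ to the middle block enlarges the feasible set of the minimization problem, so $\Zover_\delta(\bm x)$ is non-increasing in $\delta$ with $\Zover_\delta(\bm x)\le\Zover(\bm x)$. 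The crux is to upgrade the weak-duality bound $\Zunder_\delta(\bm x)\le\Zover_\delta(\bm x)$ to an equality for every $\delta>0$, after which the limit is obtained by sandwiching.

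For strong duality and finiteness I would mimic the Slater-point construction in the proof of Theorem~\ref{prop:complete_recourse}, using $\delta>0$ in place of the complete-recourse hypothesis. Setting $\bm\phi_i^{\rm s}=\mathbf e$ and $\bm\psi_i^{\rm s}=\bm 0$, the middle block of the $i$-th constraint matrix of \eqref{eq:CP_delta} becomes $\Wb\Wb^\top+\delta\mathbb I=\begin{bmatrix}\bm W\bm W^\top+\delta\mathbb I & \bm 0\\ \bm 0 & (1+\delta)\mathbb I\end{bmatrix}$, which is positive definite, hence in the interior of $\mathcal C$, \emph{regardless} of whether the recourse is complete. Decomposing the constraint matrix exactly as in \eqref{eq:constraint_decomposition}, choosing $s_i^{\rm s}$ large enough to make the perturbation matrix positive semidefinite, and invoking Lemma~\ref{lem:M_copositive_interior} with the Schur complement taken against the $\lambda^{\rm s}$-blocks shows that for $\lambda^{\rm s}$ large the remaining matrix is strictly copositive. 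This produces a Slater point for \eqref{eq:CP_delta}, so strong conic duality yields $\Zover_\delta(\bm x)=\Zunder_\delta(\bm x)$. The Slater point has finite objective, so $\Zover_\delta(\bm x)<+\infty$; and \eqref{eq:CPP} is feasible with finite objective (apply the moment-to-CPP construction of Theorem~\ref{thm:exact_CPP} to the point masses $\mathbb P_i=\delta_{\xih_i}$, which uses only sufficiently expensive recourse), so $\Zunder_\delta(\bm x)>-\infty$. Hence the common value is finite for every $\delta>0$, even when $\mathcal Z(\bm x)=+\infty$.

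It remains to show $\lim_{\delta\downarrow 0}\Zover_\delta(\bm x)=\mathcal Z(\bm x)$. From the previous paragraph, $\Zover_\delta(\bm x)=\Zunder_\delta(\bm x)\le\mathcal Z(\bm x)$ for all $\delta>0$, so $\limsup_{\delta\downarrow 0}\Zover_\delta(\bm x)\le\mathcal Z(\bm x)$. For the reverse inequality I would fix any $\theta<\mathcal Z(\bm x)$ and select a point $\{(\bm\mu_i,\bm\gamma_i,\bm\Omega_i,\bm\Gamma_i,\bm Y_i)\}_{i\in[I]}$ feasible in \eqref{eq:CPP} whose unpenalized objective exceeds $\theta$; such a point exists because $\Zunder(\bm x)=\mathcal Z(\bm x)$. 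For this \emph{fixed} point the constant $C=\tfrac{1}{I}\sum_{i\in[I]}\tr(\bm\Gamma_i)$ is finite, and feasibility in \eqref{eq:CPP_delta} gives $\Zunder_\delta(\bm x)\ge\theta-\delta C$. Letting $\delta\downarrow 0$ yields $\liminf_{\delta\downarrow 0}\Zover_\delta(\bm x)\ge\theta$, and since $\theta<\mathcal Z(\bm x)$ was arbitrary the claim follows; the same argument with $\theta\to+\infty$ covers the case $\mathcal Z(\bm x)=+\infty$.

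The main obstacle is the strong-duality step of the second paragraph: one must check that the Slater construction of Theorem~\ref{prop:complete_recourse} survives the removal of complete recourse, which hinges precisely on the fact that the $\delta\mathbb I$ term makes the middle block positive definite, so that the appeal to Lemma~\ref{lem:complete_recourse_WW} is replaced by a direct positive-definiteness argument while Lemma~\ref{lem:M_copositive_interior} is applied verbatim. Once $\Zover_\delta(\bm x)=\Zunder_\delta(\bm x)$ is established for $\delta>0$, the finiteness and the limit are routine consequences of the monotone penalty and the sandwich above.
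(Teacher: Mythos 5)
Your overall strategy coincides with the paper's: establish strong duality between \eqref{eq:CP_delta} and \eqref{eq:CPP_delta} by exhibiting a Slater point whose existence is guaranteed by $\delta>0$ rather than by complete recourse, deduce finiteness from feasibility of both conic programs, and obtain the limit from the monotonicity of the penalty together with lower semicontinuity of $\Zunder_\delta(\bm x)$ in $\delta$. Your ``fix a near-optimal feasible point of \eqref{eq:CPP} and send $\delta\downarrow 0$'' argument is exactly the unpacked form of the paper's observation that $\Zunder_\delta(\bm x)$ is a pointwise supremum of affine functions of $\delta$, hence convex and lower semicontinuous, hence right-continuous at $0$ where it is non-increasing. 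So the architecture and the limit argument are both correct.

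The one step that does not go through as written is the Slater point. You take $\bm\phi_i^{\rm s}=\mathbf e$ and reuse the decomposition \eqref{eq:constraint_decomposition}, ``choosing $s_i^{\rm s}$ large enough to make the perturbation matrix positive semidefinite.'' That perturbation matrix is, after restricting to the nonzero coordinates, $\left[\begin{smallmatrix}\Qb^\top\Qb & -\lambda^{\rm s}\xih_i\\ -\lambda^{\rm s}\xih_i^\top & s_i^{\rm s}-\lambda^{\rm s}\end{smallmatrix}\right]$, and no choice of $s_i^{\rm s}$ makes it positive semidefinite unless $\lambda^{\rm s}\xih_i$ lies in the range of $\Qb^\top\Qb$ --- this can fail, e.g., when $\bm Q=\bm 0$ and $\bm S$ annihilates a direction in which $\xih_i$ has a nonzero component. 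The gap is easily patched (reserve a fraction of the $\lambda^{\rm s}\mathbb I$ block for the perturbation matrix, making its leading block positive definite), but the paper avoids the issue entirely by also setting $\bm\phi_i^{\rm s}=\bm 0$: the $i$-th constraint matrix then collapses to the $3\times 3$ block matrix with diagonal blocks $\lambda^{\rm s}\mathbb I$, $\delta\mathbb I$ and $s_i^{\rm s}$, which is shown to be positive definite --- hence interior to $\mathcal C$ --- by two ordinary Schur complement arguments, with no decomposition and no appeal to Lemma~\ref{lem:M_copositive_interior}. I would adopt that simpler choice; otherwise your construction needs the indicated repair before the strong-duality step is complete.
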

\begin{proof}
	We first show that $\Zover_\delta(\bm x)=\Zunder_\delta(\bm x)$ by proving strong duality between problems \eqref{eq:CP_delta} and \eqref{eq:CPP_delta}. To this end, we first construct a Slater point $(\lambda^{\rm s},(s_i^{\rm s},\bm{\phi}_i^{\rm s},\bm{\psi}_i^{\rm s})_{i\in[I]})$ for problem \eqref{eq:CP_delta}. Specifically, we set $\bm{\psi}_i^{\rm s}=\bm 0$ and $\bm{\phi}_i^{\rm s}=\bm 0$ for all $i\in[I]$, and we select $\lambda^{\rm s}$ satisfying $\lambda^{\rm s}\mathbb I\succ\frac{1}{4\delta}\Tb(\bm x)\Tb(\bm x)^\top$. This is possible because $\delta>0$. A standard Schur complement argument then implies that for all sufficiently large $s_i^{\rm s}>0$, $i\in[I]$, we have
	\begin{equation*}
	\begin{bmatrix}
	\lambda^{\rm s}\mathbb I&\displaystyle-\frac{1}{2}\Tb(\bm x)^\top\\
	\displaystyle-\frac{1}{2}\Tb(\bm x)&\displaystyle \delta\mathbb I 
	\end{bmatrix}\succ\bm 0\implies	\begin{bmatrix}
	\lambda^{\rm s}\mathbb I&\displaystyle-\frac{1}{2}\Tb(\bm x)^\top\\
	\displaystyle-\frac{1}{2}\Tb(\bm x)&\displaystyle \delta\mathbb I 
\end{bmatrix}\succ\frac{1}{s_i^{\rm s}}\begin{bmatrix}
\displaystyle\lambda^{\rm s}\xih_i\\
\displaystyle\frac{1}{2}\hb(\bm x)
\end{bmatrix}\begin{bmatrix}
\displaystyle\lambda^{\rm s}\xih_i\\
\displaystyle\frac{1}{2}\hb(\bm x)
\end{bmatrix}^\top\qquad\forall i\in[I].
	\end{equation*}
A second Schur complement argument then ensures that
	\begin{equation}
	\label{eq:copositivity_Slater}
	\begin{array}{rc@{\quad}l@{\quad}l}
	&&\displaystyle\begin{bmatrix}
	\lambda^{\rm s}\mathbb I&\displaystyle-\frac{1}{2}\Tb(\bm x)^\top&\displaystyle-\lambda^{\rm s}\xih_i\\
	\displaystyle-\frac{1}{2}\Tb(\bm x)&\displaystyle \delta\mathbb I &\displaystyle -\frac{1}{2}\hb(\bm x)\\\displaystyle
	-\lambda^{\rm s}\xih_i^\top &\displaystyle-\frac{1}{2}\hb(\bm x)^\top& \displaystyle s_i^{\rm s}
	\end{bmatrix}\succ\bm 0\qquad\forall i\in[I].
	\end{array}
	\end{equation}
	Consequently, the matrix on the left-hand side of \eqref{eq:copositivity_Slater} is in the interior of the copositive cone $\mathcal C$. This proves that $(\lambda^{\rm s},(s_i^{\rm s},\bm{\phi}_i^{\rm s},\bm{\psi}_i^{\rm s})_{i\in[I]})$ is indeed a Slater point for \eqref{eq:CP_delta}. Hence, $\Zover_\delta(\bm x)=\Zunder_\delta(\bm x)$ by strong conic duality. As problem \eqref{eq:CP_delta} is feasible, we have $\Zover_\delta(\bm x)<+\infty$ for any fixed $\delta>0$. Moreover, as problem \eqref{eq:DRSP} has sufficiently expensive recourse, we have $Z(\bm x,\bm\xi)>-\infty$ for any fixed $\bm x\in\mathcal X$ and $\bm\xi\in\Xi$. Since $\Phat$ is non-empty, evaluating the worst-case expectation in \eqref{eq:WCE} yields $\Zunder(\bm x)=\mathcal Z(\bm x)>-\infty$, where the equality follows from Theorem \ref{thm:exact_CPP}. Thus, the completely positive program \eqref{eq:CPP} and its restriction \eqref{eq:CPP_delta} are both feasible, implying that $\Zover_\delta(\bm x)=\Zunder_\delta(\bm x)>-\infty$. This proves finiteness of $\Zover_\delta(\bm x)$. 
	
	To prove the second claim, we observe that 	$\Zunder_\delta(\bm x)$ constitutes a pointwise supremum of a family of affine functions in $\delta$. Thus, $\Zunder_\delta(\bm x)$ is convex and lower-semicontinuous in $\delta$ for every fixed $\bm x\in\mathcal X$. Since $\Zunder_\delta(\bm x)$ is also non-increasing in $\delta$ by construction, it is indeed right-continuous. Thus, we have
	\begin{equation}
		\label{eq:limit_delta}
			\lim_{\delta\downarrow 0}\Zover_\delta(\bm x)=\lim_{\delta\downarrow 0}\Zunder_\delta(\bm x)=\Zunder_0(\bm x)=\mathcal Z(\bm x).
	\end{equation}
	Here, the first equality holds because $\Zover_\delta(\bm x)=\Zunder_\delta(\bm x)$ for $\delta>0$, while the second equality follows from the right-continuity of $\Zunder_\delta(\bm x)$. The last equality is due to Theorem \ref{thm:exact_CPP}. This completes the proof. 
\end{proof}

The findings of this section culminate in the following main theorem.
\begin{thm}
	\label{thm:CP_DRSP}
	Consider the following family of copositive programs parametrized in $\delta$. 
	\begin{equation}
	\label{eq:case2}
	\begin{array}{rc@{\quad}l@{\quad}l}
	&\displaystyle\minimize & \displaystyle\bm c^\top\bm x +\epsilon^2\lambda +\frac{1}{I}\sum_{i\in[I]}\left[s_i+\rb^\top\bm\psi_i-\lambda\|\xih_i\|_2^2+\sum_{j\in[N_2+J]}\phi_{ij}{\rbl}^2_j\right]\\
	&\subjectto &\displaystyle \bm x\in\mathcal X,\;\lambda\in\mathbb R_+,\; s_i\in\mathbb R,\;\bm\psi_i,\bm\phi_i\in\RR^{N_2+J}&\forall i\in[I]\\
	&&\displaystyle\begin{bmatrix}
	\displaystyle\lambda\mathbb I+\Qb^\top\diag(\bm\phi_i)\Qb&\displaystyle-\frac{1}{2}\Tb(\bm x)^\top-\Qb^\top\diag(\bm\phi_i)\Wb^\top&\displaystyle-\lambda\xih_i-\frac{1}{2}\Qb^\top\bm\psi_i\\
	\displaystyle-\frac{1}{2}\Tb(\bm x)-\Wb\diag(\bm\phi_i)\Qb&\displaystyle \Wb\diag(\bm\phi_i)\Wb^\top+\delta\mathbb I&\displaystyle \frac{1}{2}(\Wb\bm\psi_i-\hb(\bm x))\\\displaystyle
	(-\lambda\xih_i-\frac{1}{2}\bm Q^\top\bm\psi_i)^\top &\displaystyle\frac{1}{2}(\Wb\bm\psi_i-\hb(\bm x))^\top& \displaystyle s_i
	\end{bmatrix}\succeq_{\mathcal C}\bm 0&\forall i\in[I]
	\end{array}
	\end{equation}
	Then, the following statements hold. 
	\begin{enumerate}[(i)]
		\item If $\delta=0$ and \eqref{eq:DRSP} has complete recourse, then \eqref{eq:case2} is equivalent to \eqref{eq:DRSP}. 
		\item If $\delta=0$ and \eqref{eq:DRSP} fails to have complete recourse, then \eqref{eq:case2} provides an upper bound on \eqref{eq:DRSP}. 
		\item If $\delta>0$, then \eqref{eq:case2} provides a lower bound on \eqref{eq:DRSP}. 
		\item If $\mathcal X$ is compact, then the optimal value of \eqref{eq:case2} converges to that of \eqref{eq:DRSP} for $\delta\downarrow 0$. Moreover, every cluster point $\bm x^\star$ of a sequence $\{\bm x^\star_\delta\}_{\delta\downarrow 0}$  of minimizers for \eqref{eq:case2} is a minimizer for \eqref{eq:DRSP}. 
	\end{enumerate}
\end{thm}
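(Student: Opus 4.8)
The plan is to recognize that problem \eqref{eq:case2} is nothing but the original problem \eqref{eq:DRSP} with the worst-case expectation $\mathcal Z(\bm x)$ replaced by $\Zover_\delta(\bm x)$: minimizing the objective of \eqref{eq:case2} over the auxiliary variables $(\lambda,(s_i,\bm\psi_i,\bm\phi_i)_{i\in[I]})$ for a fixed $\bm x$ recovers precisely the copositive program \eqref{eq:CP_delta}, so that \eqref{eq:case2} is equivalent to $\min_{\bm x\in\mathcal X}\{\bm c^\top\bm x+\Zover_\delta(\bm x)\}$, where $\Zover_0=\Zover$. With this identification, parts (i)--(iii) reduce to pointwise comparisons between $\Zover_\delta(\bm x)$ and $\mathcal Z(\bm x)$ that are already available. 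For (i), Theorem \ref{prop:complete_recourse} gives $\Zover(\bm x)=\mathcal Z(\bm x)$ for every $\bm x\in\mathcal X$ under complete recourse, so the objectives of \eqref{eq:case2} with $\delta=0$ and of \eqref{eq:DRSP} coincide. For (ii), Theorem \ref{thm:CP} gives $\Zover(\bm x)\geq\mathcal Z(\bm x)$, whence the optimal value of \eqref{eq:case2} dominates that of \eqref{eq:DRSP}. For (iii), I would invoke Proposition \ref{prop:relaxation}, noting that $\Zover_\delta(\bm x)=\Zunder_\delta(\bm x)$ and that the objective of \eqref{eq:CPP_delta} differs from that of \eqref{eq:CPP} only by the nonpositive term $-\tfrac{\delta}{I}\sum_{i\in[I]}\tr(\bm\Gamma_i)\leq 0$ (since $\bm\Gamma_i\succeq\bm 0$); hence $\Zover_\delta(\bm x)=\Zunder_\delta(\bm x)\leq\Zunder(\bm x)=\mathcal Z(\bm x)$, so \eqref{eq:case2} furnishes a lower bound.

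The substance lies in part (iv). Writing $f_\delta(\bm x)=\bm c^\top\bm x+\Zover_\delta(\bm x)$ and $f(\bm x)=\bm c^\top\bm x+\mathcal Z(\bm x)$, I would first record two structural facts. First, because the constraints of the dual program \eqref{eq:CPP_delta} do not involve $\bm x$ while its objective is affine in $(\Tb(\bm x),\hb(\bm x))$ and hence affine in $\bm x$, the function $\Zover_\delta(\bm x)=\Zunder_\delta(\bm x)$ is a pointwise supremum of affine functions of $\bm x$, thus convex; being finite for every $\delta>0$ by Proposition \ref{prop:relaxation}, it is therefore continuous on $\RR^{N_1}$. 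Second, by Proposition \ref{prop:relaxation} the map $\delta\mapsto\Zover_\delta(\bm x)$ is non-increasing with $\lim_{\delta\downarrow 0}\Zover_\delta(\bm x)=\mathcal Z(\bm x)$, so $f_\delta\leq f$ everywhere on $\mathcal X$ and $f_\delta\uparrow f$ pointwise as $\delta\downarrow 0$.

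With these facts in hand, the convergence of optimal values follows from a monotone, epigraphical limit argument. Since $f_\delta\leq f$ on $\mathcal X$, taking infima gives $\limsup_{\delta\downarrow 0}\min_{\mathcal X}f_\delta\leq\min_{\mathcal X}f$. For the matching $\liminf$, compactness of $\mathcal X$ together with continuity of $f_\delta$ guarantees that $\min_{\mathcal X}f_\delta$ is attained, say at $\bm x^\star_\delta$; passing to a subsequence, let $\bm x^\star$ denote a cluster point. Fixing any $\delta'>0$ and exploiting monotonicity $f_{\delta}\geq f_{\delta'}$ for $\delta\leq\delta'$, I would combine the continuity of $f_{\delta'}$ with $\bm x^\star_{\delta}\to\bm x^\star$ to obtain $\liminf_\delta\min_{\mathcal X}f_\delta=\liminf_\delta f_\delta(\bm x^\star_\delta)\geq\liminf_\delta f_{\delta'}(\bm x^\star_\delta)=f_{\delta'}(\bm x^\star)$; letting $\delta'\downarrow 0$ and using $f_{\delta'}(\bm x^\star)\uparrow f(\bm x^\star)\geq\min_{\mathcal X}f$ closes the chain of inequalities. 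This simultaneously establishes $\min_{\mathcal X}f_\delta\to\min_{\mathcal X}f$ and $f(\bm x^\star)=\min_{\mathcal X}f$, i.e.\ every cluster point $\bm x^\star$ is a minimizer for \eqref{eq:DRSP}.

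The main obstacle I anticipate is part (iv): the interchange of the limit $\delta\downarrow 0$ with the minimization over $\bm x$. The delicate point is that pointwise convergence of $f_\delta$ alone does not suffice, and one must exploit \emph{both} the monotonicity of $\Zover_\delta$ in $\delta$ \emph{and} the continuity in $\bm x$ inherited from the affine-in-$\bm x$ supremum representation of $\Zunder_\delta$; verifying that this representation genuinely yields continuity, and that minimizers and cluster points exist on the compact set $\mathcal X$, is where the care is required. Parts (i)--(iii), by contrast, are immediate once \eqref{eq:case2} is recast as $\min_{\bm x\in\mathcal X}\{\bm c^\top\bm x+\Zover_\delta(\bm x)\}$ and the previously established identities and inequalities relating $\Zover_\delta(\bm x)$ to $\mathcal Z(\bm x)$ are invoked.
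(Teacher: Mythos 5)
Your proposal is correct, and parts (i)--(iii) follow exactly the paper's route: recast \eqref{eq:case2} as $\min_{\bm x\in\mathcal X}\{\bm c^\top\bm x+\Zover_\delta(\bm x)\}$ and invoke Theorem~\ref{prop:complete_recourse}, Theorem~\ref{thm:CP}, and the chain $\Zover_\delta(\bm x)=\Zunder_\delta(\bm x)\leq\Zunder(\bm x)=\mathcal Z(\bm x)$ from Proposition~\ref{prop:relaxation} and Theorem~\ref{thm:exact_CPP}. The only divergence is in part (iv): the paper relies on the same structural ingredients you identify (convexity and lower semicontinuity of $\Zover_\delta(\bm x)=\Zunder_\delta(\bm x)$ in $\bm x$ via the affine-supremum representation of \eqref{eq:CPP_delta}, monotonicity in $\delta$, compactness of $\mathcal X$) but then outsources the limit interchange to the epi-convergence machinery --- \cite[Proposition 7.4(d)]{RW09:VA} for monotone sequences of lsc functions and \cite[Proposition 7.30]{shapiro2014lectures} for convergence of optimal values and minimizers --- whereas you prove the relevant special case by hand with a $\limsup$/$\liminf$ sandwich exploiting $f_\delta\geq f_{\delta'}$ for $\delta\leq\delta'$. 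Your elementary argument is valid and self-contained, which is what it buys over the citation; two minor remarks: lower semicontinuity of $f_{\delta'}$ already suffices (it gives $\liminf_\delta f_{\delta'}(\bm x^\star_\delta)\geq f_{\delta'}(\bm x^\star)$, the only direction your chain uses), so you need not establish full continuity --- which would anyway require noting that $\Zover_\delta$ is finite on all of $\RR^{N_1}$, not just on $\mathcal X$ as Proposition~\ref{prop:relaxation} literally states; and to conclude that \emph{every} cluster point is a minimizer you should run the chain along an arbitrary convergent subsequence rather than only the one realizing the $\liminf$, which your argument accommodates without change.
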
	
\begin{proof}
	Replacing $\mathcal Z(\bm x)$ in \eqref{eq:DRSP} with $\Zover_\delta(\bm x)$ yields \eqref{eq:case2}. Assertion (i) thus follows from Theorem~\ref{prop:complete_recourse}, while assertion (ii) follows from Theorem \ref{thm:CP}, which implies that $\mathcal Z(\bm x)\leq\Zover(\bm x)=\Zover_0(\bm x)$ for every $\bm x\in\mathcal X$. Assertion~(iii) holds because $\Zover_\delta(\bm x)=\Zunder_\delta(\bm x)\leq \Zunder(\bm x)=\mathcal Z(\bm x)$, where the first equality follows from Proposition~\ref{prop:relaxation}, the inequality holds because problem \eqref{eq:CPP_delta} constitutes a relaxation of \eqref{eq:CP}, and the second equality is due to Theorem \ref{thm:exact_CPP}.  As for assertion~(iv),  recall that $\Zunder_\delta(\bm x)$ is the optimal value of problem \eqref{eq:CPP_delta} and thus constitutes a pointwise supremum of affine functions in $\bm x$. Therefore,  
	 $\Zover_\delta(\bm x)=\Zunder_\delta(\bm x)$ is convex and lower semicontinuous in $\bm x$ for every fixed $\delta>0$. 
	 As $\mathcal X$ is compact, we may thus conclude that there exists a minimizer  $\bm x^\star_\delta\in\argmin_{\bm x\in\mathcal X}\bm c^\top\bm x+\Zover_\delta(\bm x)$ for every $\delta>0$. Next, note that the sequence of functions $\{\Zover_\delta(\bm x)\}_{\delta\downarrow 0}$ is non-decreasing and thus epi-converges to $\mathcal Z(\bm x)$ by \cite[Proposition 7.4(d)]{RW09:VA}. Moreover, the sequence of minimizers $\{\bm x^\star_\delta\}_{\delta\downarrow 0}$ admits at least one cluster point $\bm x^\star\in\mathcal X$.  By \cite[Proposition 7.30]{shapiro2014lectures}, $\bm x^\star$ constitutes a minimizer for~\eqref{eq:DRSP}, and we have
	\begin{equation*}
	\lim_{\delta\downarrow 0}\min_{\bm x\in\mathcal X}\bm c^\top\bm x+\Zover_\delta(\bm x)=\min_{\bm x\in\mathcal X}\bm c^\top\bm x+\mathcal Z(\bm x).
	\end{equation*}
	This completes the proof. 
\end{proof}

Theorem \ref{thm:CP_DRSP} immediately extends to two-stage robust optimization problems of the form \eqref{eq:robust}.
\begin{coro}[Two-Stage Robust Optimization]\label{thm:robust} Assume that $\Xi$ is bounded and set $I=1$. Moreover, choose $\epsilon\geq 0$ and $\xih_1\in\Xi$ such that $d(\bm\xi,\xih_1)\leq\epsilon$ $\forall\bm\xi\in\Xi$.  
	Then, the following statements hold. 
	\begin{enumerate}[(i)]
		\item If $\delta=0$ and \eqref{eq:robust} has complete recourse, then \eqref{eq:case2} is equivalent to \eqref{eq:robust}. 
		\item If $\delta=0$ and \eqref{eq:robust} fails to have complete recourse, then \eqref{eq:case2} provides an upper bound on \eqref{eq:robust}. 
		\item If $\delta>0$, then \eqref{eq:case2} provides a lower bound on \eqref{eq:robust}. 
		\item If $\mathcal X$ is compact, then the optimal value of \eqref{eq:case2} converges to that of \eqref{eq:robust} for $\delta\downarrow 0$. Moreover, every cluster point $\bm x^\star$ of a sequence $\{\bm x^\star_\delta\}_{\delta\downarrow 0}$  of minimizers for \eqref{eq:case2} is a minimizer for \eqref{eq:robust}. 
	\end{enumerate}
\end{coro}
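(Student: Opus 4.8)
The plan is to derive the corollary directly from Theorem~\ref{thm:CP_DRSP} by showing that, under the stated hypotheses, the two-stage distributionally robust program~\eqref{eq:DRSP}---instantiated with a single sample ($I=1$), reference point $\xih_1$, and $2$-Wasserstein radius $\epsilon$---collapses to the robust program~\eqref{eq:robust}. The crucial preliminary observation is that the notions of complete recourse and sufficiently expensive recourse depend only on the recourse data $(\bm W,\bm Q,\bm q,\bm T,\bm h)$ and the support $\Xi$, and not on the ambiguity set. Hence these properties transfer verbatim between~\eqref{eq:DRSP} and~\eqref{eq:robust}, the standing assumption of sufficiently expensive recourse remains in force, and the compactness of the bounded polyhedron $\Xi$ places us squarely within the setting of Section~\ref{sec:CP}.

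First I would establish the reduction $\mathcal Z(\bm x)=\max_{\bm\xi\in\Xi}Z(\bm x,\bm\xi)$. With $I=1$ the empirical distribution is the Dirac measure $\PPhat_1=\delta_{\xih_1}$, and for every $\bm\xi\in\Xi$ the only coupling of the two point masses $\delta_{\bm\xi}$ and $\delta_{\xih_1}$ is their product, so that $W^r(\delta_{\bm\xi},\delta_{\xih_1})=d(\bm\xi,\xih_1)\leq\epsilon$ by hypothesis. Consequently every Dirac $\delta_{\bm\xi}$ with $\bm\xi\in\Xi$ lies in the Wasserstein ball $\mathcal B_\epsilon^{r}(\PPhat_1)$, and restricting the supremum in~\eqref{eq:WCE} to these point masses yields $\mathcal Z(\bm x)\geq\sup_{\bm\xi\in\Xi}Z(\bm x,\bm\xi)$. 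The reverse inequality is immediate, since every distribution in the ball is supported on $\Xi$ and hence $\EE_{\PP}[Z(\bm x,\xit)]\leq\sup_{\bm\xi\in\Xi}Z(\bm x,\bm\xi)$. Boundedness of $\Xi$ guarantees finiteness of this supremum, so the worst-case expectation coincides with the robust worst case and~\eqref{eq:DRSP} reduces to~\eqref{eq:robust}. This argument is exactly the sharpened form of Remark~\ref{rem:robust}: the single-point radius condition $d(\bm\xi,\xih_1)\leq\epsilon$ for all $\bm\xi\in\Xi$ replaces the coarser diameter condition $\epsilon\geq\max_{\bm\xi,\bm\xi'\in\Xi}d(\bm\xi,\bm\xi')$ while delivering the same conclusion, and it is the only step that requires genuine argument.

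Finally, I would observe that with $I=1$ the copositive program~\eqref{eq:case2} is identical to the one appearing in Theorem~\ref{thm:CP_DRSP}, except that its averaging sum over $i\in[I]$ degenerates to a single term. Having identified $\mathcal Z(\bm x)$ in the objective of~\eqref{eq:DRSP} with $\max_{\bm\xi\in\Xi}Z(\bm x,\bm\xi)$, each of the four assertions (i)--(iv) of the corollary is precisely the corresponding assertion of Theorem~\ref{thm:CP_DRSP} applied to this special instance: the complete-recourse equivalence, the upper and lower bounding relations for $\delta=0$ and $\delta>0$, and the epi-convergence of minimizers as $\delta\downarrow0$ all carry over without modification. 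The main (and essentially only) obstacle is thus the reduction of the previous paragraph; once it is in place, no new difficulty arises, and the corollary follows.
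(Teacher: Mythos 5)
Your proposal is correct and follows essentially the same route as the paper: show that the Wasserstein ball $\mathcal B_\epsilon^{2}(\hat{\mathbb P}_1)$ contains every Dirac measure $\delta_{\bm\xi}$ with $\bm\xi\in\Xi$, conclude that the worst-case expectation \eqref{eq:WCE} collapses to $\max_{\bm\xi\in\Xi}Z(\bm x,\bm\xi)$, and then invoke Theorem~\ref{thm:CP_DRSP}. The extra details you supply (the explicit coupling of two point masses giving $W^r(\delta_{\bm\xi},\delta_{\xih_1})=d(\bm\xi,\xih_1)$, the two-sided inequality, and the remark that the recourse properties are independent of the ambiguity set) are all sound elaborations of the paper's one-line argument.
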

\begin{proof} By construction of $\epsilon$ and $\xih_1$, the Wasserstein ball $\mathcal B_\epsilon^{2}(\hat{\mathbb P}_I)$ contains all Dirac distributions $\delta_{\bm\xi}$, $\bm\xi\in\Xi$. Therefore, the worst-case expected cost \eqref{eq:WCE} reduces to $\max_{\bm\xi\in\Xi}Z(\bm x,\bm\xi)$.  The claim thus follows immediately from Theorem~\ref{thm:CP_DRSP}.
\end{proof}
To our best knowledge, Corollary \ref{thm:robust} provides the first exact finite conic programming reformulation for the \emph{generic} two-stage robust optimization problem \eqref{eq:robust}. When the uncertainty appears only  in the constraints of the recourse problem \eqref{eq:Recourse}, approximation schemes based on the cutting plane method are available~\cite{thiele2009robust,zeng2013solving}. These approximations construct increasingly tight lower bounds for the wait-and-see cost $\max_{\bm\xi\in\Xi} Z(\bm x,\bm\xi)$. Each iteration is costly, however,  as it involves the solution of a bilinear maximization problem. Thus, additional assumptions on the uncertainty set $\Xi$  are needed to alleviate the computational burden of generating the cuts. For example, when $\Xi$ is a finite or a budgeted uncertainty set, then one can reformulate the corresponding bilinear maximization problems as mixed integer linear programs of moderate sizes. If problem \eqref{eq:robust} fails to have relatively complete recourse, however, then many expensive iterations may be required to obtain a first feasible solution. The results portrayed in Theorem \ref{thm:CP_DRSP} and  Corollary \ref{thm:robust} motivate an alternative conservative approximation scheme to solve problem \eqref{eq:robust} that can immediately produce a feasible solution. This is achieved by employing a tractable inner approximation for the copositive cone $\mathcal C$. We discuss this approach in the next section. 

\subsection{A Hierarchy of Semidefinite Programming Approximations}
\label{sec:SDP_approximation}
To justify the use of an approximation scheme, we first establish that two-stage distributionally robust linear programs of the form \eqref{eq:DRSP} are intractable. 
\begin{prop}
	\label{prop:DRSP_NP_hard}
	The two-stage distributionally robust linear program \eqref{eq:DRSP} is {NP}-hard even if $\bm Q=\bm 0$ and $\mathcal X$ is a polyhedron specified by a list of linear inequalities.
\end{prop}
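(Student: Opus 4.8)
The plan is to exhibit a polynomial reduction from an NP-hard combinatorial problem to \eqref{eq:DRSP}. First I would observe that it suffices to make the \emph{evaluation} of the worst-case cost $\mathcal Z(\bm x)$ hard for a single first-stage decision: restricting $\mathcal X=\{\bm x_0\}$ --- a polyhedron described by the $2N_1$ inequalities $\bm x_0\le\bm x\le\bm x_0$ --- collapses \eqref{eq:DRSP} to the computation of $\bm c^\top\bm x_0+\mathcal Z(\bm x_0)$. Next, I would place the whole construction in the robust regime: taking $\Xi$ bounded and $\epsilon$ at least the diameter of $\Xi$, Remark~\ref{rem:robust} shows that $\mathcal Z(\bm x_0)=\max_{\bm\xi\in\Xi}Z(\bm x_0,\bm\xi)$, so the resulting instances are genuine instances of \eqref{eq:DRSP} and any hardness transfers. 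It therefore suffices to construct, with $\bm Q=\bm 0$, recourse data for which evaluating $\max_{\bm\xi\in\Xi}Z(\bm x_0,\bm\xi)$ solves an NP-hard problem.

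Since $\bm Q=\bm 0$, the recourse cost is deterministic and, dualizing \eqref{eq:Recourse} (strong duality holds under the standing sufficiently-expensive-recourse assumption, which with $\bm Q=\bm 0$ amounts to $P=\{\bm p\ge\bm 0:\bm W^\top\bm p=\bm q\}\neq\emptyset$), I obtain $\max_{\bm\xi\in\Xi}Z(\bm x_0,\bm\xi)=\max_{\bm\xi\in\Xi}\max_{\bm p\in P}\bm\xi^\top\bm T(\bm x_0)^\top\bm p$ (taking $\bm h(\bm x_0)=\bm 0$), that is, a bilinear program over $\Xi\times P$. The general principle behind the reduction is to choose the data so that $\Xi$ and $P$ are copies of the same box and the coupling matrix $\bm M$ between $\bm\xi$ and the non-slack part of $\bm p$ is positive semidefinite; then the elementary bound $\bm\xi^\top\bm M\bm p\le\tfrac12(\bm\xi^\top\bm M\bm\xi+\bm p^\top\bm M\bm p)$, together with its attainment on the diagonal $\bm\xi=\bm p$, collapses the bilinear maximum to the convex-quadratic maximum $\max_{\bm z\in\Xi}\bm z^\top\bm M\bm z$, whose NP-hardness is classical.

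To realize this concretely I would reduce from \textsc{Max-Cut}. Given a graph $G=([K],E)$ with Laplacian $\bm L\succeq\bm 0$, I set $\Xi=[0,1]^K\subseteq\RR_+^K$, $\bm q=\mathbf e$, $\bm W=[\mathbb I\;\;\mathbb I]^\top$ and $\bm T(\bm x_0)=[\bm L\;\;\bm 0]^\top$, so that $\bm W^\top\bm p=\bm q$ with $\bm p\ge\bm 0$ describes the box $p_i+s_i=1$ and the slacks $s_i$ do not enter the objective. A direct computation then gives $Z(\bm x_0,\bm\xi)=\sum_i\big((\bm L\bm\xi)_i\big)_+$, a convex piecewise-linear function whose maximum over the box is attained at a vertex $\bm\xi=\mathbf 1_S$, where $\sum_i\big((\bm L\mathbf 1_S)_i\big)_+=\mathrm{cut}(S)$. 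Hence $\max_{\bm\xi\in\Xi}Z(\bm x_0,\bm\xi)$ equals the maximum cut of $G$; this gadget is directly verifiable and thus sidesteps the symmetrization argument while illustrating it.

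I expect the main obstacle to be verifying that a single polynomially sized instance can meet all the structural requirements at once: the support $\Xi$ must lie in $\RR_+^K$ and be bounded; the embedding into \eqref{eq:DRSP} via the robust limit must be exact; the recourse must remain sufficiently expensive (indeed finite) so that $\max_{\bm\xi\in\Xi}Z(\bm x_0,\bm\xi)$ is a genuine value rather than $+\infty$; and the bilinear-to-quadratic passage must be exact, which hinges on the positive semidefiniteness of $\bm M$ and on the maximum of the convex recourse being attained at a $0/1$ vertex. Once the explicit \textsc{Max-Cut} gadget above is checked to satisfy these conditions, the NP-hardness of maximum cut is invoked as a black box and the proof concludes.
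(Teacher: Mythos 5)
Your reduction is correct, but it takes a genuinely different route from the paper. The paper's proof is a two-line citation: by Remark~\ref{rem:robust}, problem \eqref{eq:DRSP} subsumes the two-stage robust problem \eqref{eq:robust}, and the NP-hardness of two-stage robust linear programs with fixed recourse is then quoted from the literature (Theorem~3.5 of \cite{G02:robust}). You share the first step---passing to the robust regime by taking $\Xi$ compact and $\epsilon$ at least its diameter, and freezing $\bm x$ via a singleton polyhedron---but you then prove the hardness from scratch with an explicit \textsc{Max-Cut} gadget instead of invoking an external theorem. Your gadget checks out: with $\bm W=[\mathbb I\;\;\mathbb I]^\top$, $\bm q=\mathbf e$, $\bm T(\bm x_0)=[\bm L\;\;\bm 0]^\top$ and $\bm h=\bm 0$, the primal recourse gives $Z(\bm x_0,\bm\xi)=\sum_i((\bm L\bm\xi)_i)_+$, the dual feasible set projects onto the box $[0,1]^K$ so sufficiently expensive (indeed complete) recourse holds, the bilinear-to-quadratic collapse is valid because $\bm L\succeq\bm 0$, and a direct computation confirms $\sum_i((\bm L\mathbf 1_S)_i)_+=\mathrm{cut}(S)$ with the convex maximum attained at a vertex. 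What your approach buys is self-containedness and a sharper localization of the difficulty: the hardness already sits in evaluating $\mathcal Z(\bm x)$ for a \emph{fixed} first-stage decision with a sum-of-max recourse cost---precisely the problem class \eqref{eq:Recourse_hard} the paper later uses for its numerical experiments, where it separately cites strong NP-hardness of the worst-case evaluation. What the paper's citation buys is brevity and a slightly more general statement (hardness of the whole class of two-stage robust LPs with fixed recourse, not just one family of instances). Either proof would be acceptable; yours could even replace the citation if a self-contained argument were preferred.
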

\begin{proof}
	Recall from Remark \ref{rem:robust} that the distributionally robust linear programs \eqref{eq:DRSP} encapsulate the class of all two-stage robust optimization problems of the form \eqref{eq:robust}. The claim thus follows immediately from \cite[Theorem~3.5]{G02:robust}, which asserts that two-stage robust optimization problems with fixed recourse are NP-hard.
\end{proof}
The complexity result of Proposition \ref{prop:DRSP_NP_hard} is also plausible in view of Theorem \ref{thm:CP_DRSP} and the known fact that linear programs over copositive cones are generically intractable \cite{MK87:NP_complete_quad}.
 A tractable conservative approximation for \eqref{eq:case2} can be obtained by replacing the copositive cone $\mathcal C$ with 
\begin{equation*}
\mathcal C^0=\left\{\bm M\in\mathbb S^K : \bm M=\bm P+\bm N,\;\bm P\succeq \bm 0,\;\bm N\geq \bm 0\right\}.
\end{equation*}
By construction, we have $\mathcal C^0\subseteq \mathcal C$, but for dimensions $K\leq 4$ one can prove that $\mathcal C^0=\mathcal C$ \cite{diananda62:copositive}. For $K>4$, $\mathcal C^0$ is a strict subset of $\mathcal C$. In this case, there exists a hierarchy of semidefinite representable cones $\{\mathcal C^\ell\}_{\ell\geq 1}$ that provide increasingly tight inner approximations for $\mathcal C$ and converge in finitely many iterations to $\mathcal C$~
 \cite{parrilo2000structured,bomze2002solving,DKP02:copositive,lasserre2009convexity}. If these tractable cones are used to replace $\mathcal C$ in~\eqref{eq:case2}, then the sizes of the resulting approximate problems can, however, become prohibitively large for $\ell>0$. In practice, we find that replacing the cone $\mathcal C$ with $\mathcal C^0$ is sufficient to generate solutions that enjoy an acceptable accuracy. 
 
Theorem \ref{prop:complete_recourse} implies that if problem \eqref{eq:DRSP} has complete recourse, then the copositive program \eqref{eq:case2} with $\delta=0$ is equivalent to the two-stage distributionally robust linear program \eqref{eq:DRSP}. In numerical tests, we observe that strong duality between the conic programs \eqref{eq:CP} and \eqref{eq:CPP} holds also for many problem instances that violate the complete recourse condition. In all these cases, near-optimal solutions for \eqref{eq:DRSP} can be computed by solving the semidefinite programming approximation obtained by setting $\delta=0$ and replacing the cone $\mathcal C$  in \eqref{eq:case2} with an inner approximation $\mathcal C^\ell$. On the other hand, if strong duality fails to hold, {\color{black} then} feasible candidate solutions for~\eqref{eq:DRSP} can be computed in a similar manner by solving the semidefinite programming approximation~\eqref{eq:case2} for increasingly small values of $\delta>0$ until a suitable termination criterion is met. 
%

	\subsection{Accounting for Risk-Aversion}
	\label{sect:risk-aversion}
	Until now we have studied two-stage distributionally robust optimization problems with an ambiguity-averse but risk-neutral decision maker in mind. The worst-case expectation~\eqref{eq:WCE} is a natural objective criterion for such agents. However, many decision makers are both ambiguity-averse {\em and} risk-averse and may thus prefer to minimize a {\color{black} worst-case optimized certainty equivalent}. In the remainder we will argue that the main results of this section naturally extend to this setting. Specifically, we consider non-decreasing, convex and piecewise affine disutility functions of the form $U(y)=\max_{t\in[T]}\{\alpha_t y+\beta_t\}$, where $\bm \alpha\in\mathbb R^T_+,\;\bm \alpha\neq\bm 0$ and $\bm \beta\in\mathbb R^T$, and we replace the worst-case expectation~\eqref{eq:WCE} with the {\color{black} worst-case optimized certainty equivalent 
	\begin{equation}
		\label{eq:WCE_RA}
		\mathcal Z(\bm x)=\inf_{\theta\in\RR}\theta+\sup_{\PP\in\Phat}\mathbb E_{\mathbb P}\left[U\left(Z(\bm x,\tilde{\bm\xi})-\theta\right)\right]
	\end{equation}
corresponding to $U$. The worst-case optimized certainty equivalent determines an optimized payment schedule of the uncertain wait-and-see cost $Z(\bm x,\tilde{\bm\xi})$ into a fraction $\theta$ that is paid here-and-now and a remainder $Z(\bm x,\tilde{\bm\xi}) - \theta$ that is paid after the uncertainty has been observed. Optimized certainty equivalents encapsulate mean-variance and conditional value-at-risk measures as special cases, see~\cite{ben2007old}. Similar objective criteria are used in~\cite{hanasusanto2016k,NRZ11:mixed01} to model the decision maker's risk-aversion.}
	\begin{coro}
		\label{prop:CP_RA}
		Consider the following family of copositive programs parametrized in $\delta$. 
		\begin{equation}
			\label{eq:CP_RA}
			\begin{array}{rcll}
				&\hspace{-6mm}\displaystyle\inf_{} &\hspace{-1mm} \displaystyle\bm c^\top\bm x{\color{black}+\theta}+\epsilon^2\lambda +\frac{1}{I}\sum_{i\in[I]}s_i\\
				&\hspace{-6mm}\st &\hspace{-1mm}\displaystyle\bm x\in\mathcal X,\; {\color{black} \theta\in\mathbb R,} \; \lambda\in\mathbb R_+,\; s_i,\kappa_{it}\in\RR,\;\bm\psi_{it},\bm\phi_{it}\in\RR^{N_2+J}\qquad\qquad\qquad\qquad\qquad\qquad\;\;\;\;\quad\quad\;\;\forall i\in[I]\;\forall t\in[T]\\
				&&\hspace{-2mm}\displaystyle\begin{bmatrix}
					\displaystyle\lambda\mathbb I+\Qb^\top\diag(\bm\phi_{it})\Qb&\displaystyle-\frac{1}{2}\alpha_t\Tb(\bm x)^\top-\Qb^\top\diag(\bm\phi_{it})\Wb^\top&\displaystyle-\lambda\xih_i-\frac{1}{2}\Qb^\top\bm\psi_{it}\\
					\displaystyle-\frac{1}{2}\alpha_t\Tb(\bm x)-\Wb\diag(\bm\phi_{it})\Qb&\displaystyle \Wb\diag(\bm\phi_{it})\Wb^\top+\delta\mathbb I&\displaystyle \frac{1}{2}(\Wb\bm\psi_{it}-\alpha_t\hb(\bm x))\\\displaystyle
					(-\lambda\xih_i-\frac{1}{2}\bm Q^\top\bm\psi_{it})^\top &\displaystyle\frac{1}{2}(\Wb\bm\psi_{it}-\alpha_t\hb(\bm x))^\top& \displaystyle s_i+\kappa_{it}
				\end{bmatrix}\succeq_{\mathcal C}\bm 0\\
				&&\hspace{-1mm}\qquad\qquad\qquad\qquad\qquad\qquad\qquad\qquad\qquad\qquad\qquad\qquad\qquad\qquad\qquad\qquad\forall i\in[I]\;\forall t\in[T]\\
				&&\hspace{-1mm}\displaystyle\kappa_{it}={\color{black} \alpha_t\theta} -\beta_t-\rb^\top\bm\psi_{it}+\lambda\|\xih_i\|_2^2-\sum_{j\in[N_2+J]}\phi_{itj}{\rbl}^2_j\qquad\qquad\qquad\qquad\qquad\qquad\;\;\,\,\forall i\in[I]\;\forall t\in[T]
			\end{array}
		\end{equation}
		If $\mathcal Z(\bm x)$ denotes the worst-case expected disutility function~\eqref{eq:WCE_RA}, then the following statements hold. 
		\begin{enumerate}[(i)]
			\item If $\delta=0$ and \eqref{eq:DRSP} has complete recourse, then \eqref{eq:CP_RA} is equivalent to \eqref{eq:DRSP}. 
			\item If $\delta=0$ and \eqref{eq:DRSP} fails to have complete recourse, then \eqref{eq:CP_RA} provides an upper bound on \eqref{eq:DRSP}. 
			\item If $\delta>0$, then \eqref{eq:CP_RA} provides a lower bound on \eqref{eq:DRSP}. 
			\item If $\mathcal X$ is compact, then the optimal value of \eqref{eq:CP_RA} converges to that of \eqref{eq:DRSP} for $\delta\downarrow 0$. Moreover, every cluster point $\bm x^\star$ of a sequence $\{\bm x^\star_\delta\}_{\delta\downarrow 0}$  of minimizers for \eqref{eq:CP_RA} is a minimizer for \eqref{eq:DRSP}. 
		\end{enumerate}
		
	\end{coro}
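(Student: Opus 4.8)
The plan is to reduce the risk-averse problem to the machinery developed for the risk-neutral case in the preceding subsections, treating $\theta$ as an auxiliary first-stage variable. First I would exploit the piecewise affine structure $U(y)=\max_{t\in[T]}\{\alpha_t y+\beta_t\}$ and apply the dual reformulation of Theorem~\ref{thm:moment_problem}, but now to the composite cost $U(Z(\bm x,\cdot)-\theta)$ in place of $Z(\bm x,\cdot)$, to obtain for any fixed $\bm x$ and $\theta$
\[
\sup_{\PP\in\Phat}\EE_\PP\!\left[U(Z(\bm x,\tilde{\bm\xi})-\theta)\right]=\inf_{\lambda\geq 0}\epsilon^2\lambda+\frac{1}{I}\sum_{i\in[I]}\sup_{\bm\xi\in\Xi}\Big[U(Z(\bm x,\bm\xi)-\theta)-\lambda\|\bm\xi-\xih_i\|_2^2\Big].
\]
Since the maximum over the finite index set $[T]$ commutes with the supremum over $\bm\xi$ (the penalty $-\lambda\|\bm\xi-\xih_i\|_2^2$ being common to all $t$), each inner supremum decouples into $\max_{t\in[T]}\sup_{\bm\xi\in\Xi}\{\alpha_t Z(\bm x,\bm\xi)-\alpha_t\theta+\beta_t-\lambda\|\bm\xi-\xih_i\|_2^2\}$.

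The key step is to absorb the non-negative scalars $\alpha_t$ into the recourse data. Because problem~\eqref{eq:DRSP} has sufficiently expensive recourse, strong duality gives $Z(\bm x,\bm\xi)=Z_{\mathrm d}(\bm x,\bm\xi)$, and for every $\alpha_t\geq 0$ one has $\alpha_t Z_{\mathrm d}(\bm x,\bm\xi)=\sup\{(\alpha_t\bm T(\bm x)\bm\xi+\alpha_t\bm h(\bm x))^\top\bm p:\bm p\geq\bm 0,\ \bm Q\bm\xi+\bm q=\bm W^\top\bm p\}$, where for $\alpha_t=0$ the supremum equals zero over the non-empty dual feasible set. Thus each $(i,t)$-subproblem is structurally identical to the inner supremum analyzed in the proof of Theorem~\ref{thm:CP}, with $\Tb(\bm x),\hb(\bm x)$ replaced by $\alpha_t\Tb(\bm x),\alpha_t\hb(\bm x)$ and an extra constant $\beta_t-\alpha_t\theta$; note that the feasible set for $\bm p$, and hence the blocks involving $\Qb,\Wb,\rb$, are unaffected by the scaling. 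Running the same three manipulations---appending the redundant quadratic identities $\rbl_j^2=(\Wb_{:j}^\top\thetab-\Qb_{j:}^\top\bm\xi)^2$, expressing all constraints in Lagrangian form, and invoking Lemma~\ref{lem:copositive_semiinf_equiv}---converts each subproblem into a copositive constraint. The constant $\beta_t-\alpha_t\theta$ together with the constant parts $\rb^\top\bm\psi_{it}-\lambda\|\xih_i\|_2^2+\sum_{j}\phi_{itj}\rbl_j^2$ is collected into the variable $\kappa_{it}$, a single epigraphical variable $s_i$ linearizes the maximum over $t$, and the outer $\inf_\theta$ merges with the minimization---yielding precisely the objective and constraint system of \eqref{eq:CP_RA}, including the $+\theta$ term.

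It then remains to transfer the four assertions. Assertion (ii) is immediate, since the interchange of the supremum and infimum in the Lagrangian step only produces an upper bound on $\mathcal Z(\bm x)$, exactly as in Theorem~\ref{thm:CP}. For (i), (iii) and (iv) I would re-run, mutatis mutandis, the completely positive dual construction of Theorem~\ref{thm:exact_CPP}, the Slater-point argument of Theorem~\ref{prop:complete_recourse}, and the $\delta$-regularization of Proposition~\ref{prop:relaxation}; assertion (iv) then follows from the convexity, epi-convergence and cluster-point reasoning of Theorem~\ref{thm:CP_DRSP} verbatim. The main obstacle is the exactness direction: the completely positive dual of \eqref{eq:CP_RA} carries separate moment blocks for each pair $(i,t)$, and recovering a single feasible distribution $\PP_i$ per sample requires splitting the atoms of the decomposition \eqref{eq:CPP_decomposition} across the indices $t$ so that the disutility maximum $U(Z(\bm x,\bm\xi)-\theta)$ is attained at the intended $t$ on each atom---a refinement of the construction in Theorem~\ref{thm:exact_CPP} that must be reconciled with the outer infimum over $\theta$. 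A secondary but routine point is re-establishing the Slater condition for \eqref{eq:CP_RA}: setting $\bm\psi_{it}^{\mathrm s}=\bm 0$, $\bm\phi_{it}^{\mathrm s}=\mathbf e$ and fixing $\theta$ decomposes each $(i,t)$-matrix into a strictly copositive structural block---whose leading copositivity is supplied by $\bm W\bm W^\top\succ_{\mathcal C}\bm 0$ via Lemmas~\ref{lem:complete_recourse_WW} and~\ref{lem:M_copositive_interior}, unaffected by the scaling of $\Tb,\hb$---plus a positive semidefinite remainder whose corner $s_i+\kappa_{it}$ is made dominant by choosing each $s_i^{\mathrm s}$ sufficiently large. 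Since $\bm\alpha\neq\bm 0$ keeps the problem non-degenerate and the $\alpha_t=0$ blocks only simplify the analysis, this completes the transfer.
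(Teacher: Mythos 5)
Your proposal is correct and follows essentially the same route as the paper, which disposes of this corollary in one line as ``an immediate generalization of Theorem~\ref{thm:CP_DRSP}'' with details omitted; your expansion---treating $\theta$ as a first-stage variable, pulling the finite maximum over $t\in[T]$ outside the inner supremum, absorbing $\alpha_t$ into $\Tb(\bm x)$ and $\hb(\bm x)$, collecting the constants into $\kappa_{it}$, and rerunning the arguments of Theorems~\ref{thm:CP}, \ref{thm:exact_CPP}, \ref{prop:complete_recourse} and Proposition~\ref{prop:relaxation} for each $(i,t)$ block---is exactly the intended generalization, and your bookkeeping for the corner entry $s_i+\kappa_{it}$ matches \eqref{eq:CP_RA}. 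The subtlety you flag about splitting the completely positive atoms across the indices $t$ when reconstructing a single $\PP_i$ in the exactness direction is real but resolvable (the dual normalization $\sum_t\pi_{it}=1$ per sample supplies the required mixture), and does not affect the validity of the approach.
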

\begin{proof}
This is an immediate generalization of Theorem~\ref{thm:CP_DRSP}. Details are omitted for brevity.
\end{proof}

\section{Linear Programming Reformulation for \texorpdfstring{$\bm Q=\bm 0$}{}}
\label{sec:tractability}
We assume now that the uncertainty affects only the constraints of the recourse problem \eqref{eq:Recourse}, that is, we assume that $\bm Q=\bm 0$. Unless stated otherwise, we further assume throughout this section that $\Xi=\mathbb R^K$, and that the ambiguity set is constructed using the 1-Wasserstein metric with reference distance~${d}(\bm\xi_1,\bm\xi_2)=\|\bm\xi_1-\bm\xi_2\|$, where the norm  $\|\cdot\|$ is defined through
\begin{equation}
\label{eq:case1_tractable_norm}
\|\bm\xi\|=\mathbf e^\top\max\left\{w_+\cdot\bm\xi,-w_-\cdot\bm\xi\right\}
\end{equation}
for some positive scaling parameters $w_+$ and $w_-$. Note, that \eqref{eq:case1_tractable_norm} reduces to the 1-norm if $w_+=w_-=1$. 
 
Finally, we always assume that \eqref{eq:DRSP} has sufficiently expensive recourse. Under these assumptions, the two-stage distributionally robust linear program \eqref{eq:DRSP} admits an equivalent reformulation as a tractable linear program. 
\subsection{Tractable Formulation}
	We first establish the main tractability result for the two-stage distributionally robust linear program~\eqref{eq:DRSP}.
	\begin{thm}
		\label{thm:case1_tractable}
		The two-stage distributionally robust optimization problem \eqref{eq:DRSP} is equivalent to the tractable linear program
		\begin{equation}
		\label{eq:case1_tractable}
		\begin{array}{rc@{\quad}l@{\quad}l}
		&\displaystyle\minimize_{} &\displaystyle \epsilon\lambda +\frac{1}{I}\sum_{i\in[I]} \bm q^\top\bm y_i\\
		&\subjectto & \displaystyle\bm x\in\mathcal X,\;\lambda\in\mathbb R_+,\; \bm y_i\in\mathbb R^{N_2}& \forall  i\in[I]\\
		&&\bm\phi_k,\;\bm\psi_k\in\mathbb R^{N_2}&\forall k\in[K]\\
		&& \displaystyle\bm T(\bm x)\hat{\bm\xi}_i+\bm h(\bm x)\leq \bm W\bm y_i& \forall i\in[I]\\
		&& \displaystyle\hspace{-2mm}\left.\begin{array}{l}\bm q^\top\bm\phi_k\leq \lambda,\; \bm q^\top\bm\psi_k\leq \lambda\\\bm T(\bm x)\mathbf e_k/w_+\leq \bm W\bm\phi_k,\;-\bm T(\bm x)\mathbf e_k/w_-\leq \bm W\bm\psi_k\end{array}\right\}& \forall k\in[K].
		\end{array}
		\end{equation}
	\end{thm}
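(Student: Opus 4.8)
The plan is to start from the dual formulation of the worst-case expectation and to exploit the fact that $\bm Q=\bm 0$ decouples the dual recourse feasible set from $\bm\xi$. Setting $r=1$ and $\Xi=\mathbb R^K$ in Theorem~\ref{thm:moment_problem} yields (for $\epsilon>0$; the case $\epsilon=0$ collapses to a sample average and is trivially linear)
\[
\mathcal Z(\bm x)=\inf_{\lambda\geq 0}\ \epsilon\lambda+\frac{1}{I}\sum_{i\in[I]}\sup_{\bm\xi\in\mathbb R^K}\bigl(Z(\bm x,\bm\xi)-\lambda\|\bm\xi-\xih_i\|\bigr),
\]
with $\|\cdot\|$ as in~\eqref{eq:case1_tractable_norm}. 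Because $\bm Q=\bm 0$, the dual recourse problem~\eqref{eq:Recourse_dual} has the $\bm\xi$-independent feasible set $\mathcal P=\{\bm p\in\mathbb R_+^M:\bm W^\top\bm p=\bm q\}$, and the sufficiently expensive recourse assumption guarantees both $\mathcal P\neq\emptyset$ and strong LP duality, so that $Z(\bm x,\bm\xi)=\max_{\bm p\in\mathcal P}(\bm T(\bm x)\bm\xi+\bm h(\bm x))^\top\bm p$ for every $\bm\xi\in\mathbb R^K$.

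First I would substitute this dual representation into the inner supremum and interchange the two suprema (which is unconditionally valid, since both are suprema). After the shift $\bm\zeta=\bm\xi-\xih_i$, the bracketed expression separates into $(\bm T(\bm x)\xih_i+\bm h(\bm x))^\top\bm p+\sup_{\bm\zeta}\bigl((\bm T(\bm x)^\top\bm p)^\top\bm\zeta-\lambda\|\bm\zeta\|\bigr)$. The conjugate of the gauge~\eqref{eq:case1_tractable_norm} decomposes coordinatewise, and a short one-dimensional calculation shows that $\sup_{\bm\zeta}\bigl(\bm a^\top\bm\zeta-\lambda\|\bm\zeta\|\bigr)$ equals $0$ if $-\lambda w_-\mathbf e\leq\bm a\leq\lambda w_+\mathbf e$ and $+\infty$ otherwise. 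Hence the inner supremum over $\bm\xi$ is finite — and equal to $\max_{\bm p\in\mathcal P}(\bm T(\bm x)\xih_i+\bm h(\bm x))^\top\bm p=Z(\bm x,\xih_i)$ — precisely when \emph{every} $\bm p\in\mathcal P$ satisfies $-\lambda w_-\mathbf e\leq\bm T(\bm x)^\top\bm p\leq\lambda w_+\mathbf e$; otherwise it is $+\infty$. Crucially, this $\lambda$-condition does not depend on $i$, so
\[
\mathcal Z(\bm x)=\inf_{\lambda\geq 0}\Bigl\{\epsilon\lambda+\tfrac{1}{I}\textstyle\sum_{i\in[I]}Z(\bm x,\xih_i)\ :\ \max_{\bm p\in\mathcal P}(\bm T(\bm x)\mathbf e_k)^\top\bm p\leq\lambda w_+,\ \max_{\bm p\in\mathcal P}(-\bm T(\bm x)\mathbf e_k)^\top\bm p\leq\lambda w_-\ \ \forall k\in[K]\Bigr\}.
\]

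Next I would linearize each remaining optimization problem by LP duality, again invoking $\mathcal P\neq\emptyset$ to secure strong duality. The empirical term reads $Z(\bm x,\xih_i)=\min\{\bm q^\top\bm y_i:\bm W\bm y_i\geq\bm T(\bm x)\xih_i+\bm h(\bm x)\}$, reproducing the objective term $\tfrac1I\sum_i\bm q^\top\bm y_i$ and the first constraint block of~\eqref{eq:case1_tractable}. For the growth conditions, duality gives $\max_{\bm p\in\mathcal P}(\bm T(\bm x)\mathbf e_k)^\top\bm p=\min\{\bm q^\top\bm\phi_k:\bm W\bm\phi_k\geq\bm T(\bm x)\mathbf e_k\}$, so the requirement $\max_{\bm p\in\mathcal P}(\bm T(\bm x)\mathbf e_k)^\top\bm p\leq\lambda w_+$ is equivalent to the existence of $\bm\phi_k$ with $\bm W\bm\phi_k\geq\bm T(\bm x)\mathbf e_k$ and $\bm q^\top\bm\phi_k\leq\lambda w_+$; after the rescaling $\bm\phi_k\mapsto\bm\phi_k/w_+$ (legitimate since $w_+>0$) this becomes exactly $\bm W\bm\phi_k\geq\bm T(\bm x)\mathbf e_k/w_+$ and $\bm q^\top\bm\phi_k\leq\lambda$. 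The symmetric treatment of $-\bm T(\bm x)\mathbf e_k$ with the rescaling $\bm\psi_k\mapsto\bm\psi_k/w_-$ produces the $\bm\psi_k$ constraints. Collapsing all the nested minimizations into a single minimization over $(\bm x,\lambda,\{\bm y_i\},\{\bm\phi_k,\bm\psi_k\})$ and appending the first-stage term $\bm c^\top\bm x$ and the constraint $\bm x\in\mathcal X$ then yields precisely~\eqref{eq:case1_tractable}.

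The most delicate step is the conjugate computation together with the bookkeeping of the finiteness condition: because the supremum over $\bm\zeta$ returns $+\infty$ as soon as a single $\bm p\in\mathcal P$ violates the bound, the correct reduction is a \emph{global} condition over all of $\mathcal P$, and this is what splits into the two families of constraints indexed by $k$ and by the two signs. I would also track the degenerate cases carefully in order to obtain genuine equivalence rather than one-sided bounds: if $\max_{\bm p\in\mathcal P}(\pm\bm T(\bm x)\mathbf e_k)^\top\bm p=+\infty$ for some $k$, then no finite $\lambda$ is admissible and $\mathcal Z(\bm x)=+\infty$, while simultaneously the corresponding dual recourse program defining $\bm\phi_k$ (or $\bm\psi_k$) is infeasible, so~\eqref{eq:case1_tractable} is infeasible as well and the two values still agree. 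Finally, the asymmetry $w_+\neq w_-$, which stops $\|\cdot\|$ from being a genuine norm, enters only through the scalar thresholds $\lambda w_\pm$ and causes no difficulty, since the dual formulation in Theorem~\ref{thm:moment_problem} depends on $d$ solely through the distances $\|\bm\xi-\xih_i\|$ from the variable point to the fixed samples.
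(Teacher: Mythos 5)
Your proposal is correct and follows essentially the same route as the paper: dualize the recourse problem (exploiting that $\bm Q=\bm 0$ makes the dual feasible set $\mathcal P$ independent of $\bm\xi$), apply Theorem~\ref{thm:moment_problem}, reduce the inner supremum over $\bm\xi$ to the uniform dual-norm condition $\|\bm T(\bm x)^\top\bm p\|_*\leq\lambda$ over all of $\mathcal P$ as in~\eqref{eq:case1}, and then linearize the resulting $2K$ growth conditions and the empirical recourse terms via LP strong duality. The only local difference is that you evaluate the supremum over $\bm\xi$ by a direct coordinatewise conjugate computation of the gauge~\eqref{eq:case1_tractable_norm}, whereas the paper introduces the dual-norm variable $\bm\gamma$ and invokes a minimax theorem to swap it with the supremum over $\bm\xi$; the two devices are equivalent, and yours is marginally more elementary.
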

	\begin{proof}
		By strong linear programming duality, which holds because problem \eqref{eq:DRSP} has sufficiently expensive recourse, we have  $ Z(\bm x,\bm\xi)= Z_{\mathrm d}(\bm x,\bm\xi)$ for every $\bm x\in\mathcal X$ and $\bm{\xi}\in\RR^K$. 
		Theorem \ref{thm:moment_problem} thus implies that
		\begin{equation*}
		\begin{array}{l@{\quad}l}
		\mathcal Z(\bm x)=&\displaystyle\inf_{\lambda\geq 0} \epsilon\lambda +\frac{1}{N}\sum_{i\in[I]} \sup_{\bm\xi}\sup_{\substack{\bm p\geq\bm 0\\\bm W^\top \bm p=\bm q}}(\bm T(\bm x)\bm\xi+\bm h(\bm x))^\top \bm  p-\lambda\|\bm\xi-\hat{\bm\xi}_i\|.
		\end{array}
		\end{equation*}
		Invoking the definition of the dual norm and interchanging the order of the supremum operators over $\bm{\xi}$ and $\bm{p}$, we further obtain 
		\begin{equation*}
		\begin{array}{rl@{\quad}l}
		\mathcal Z(\bm x)
		=&\displaystyle\inf_{\lambda\geq 0} \epsilon\lambda +\frac{1}{N}\sum_{i\in[I]} \sup_{\substack{\bm p\geq \bm 0\\\bm W^\top \bm p=\bm q}}\sup_{\bm\xi}\inf_{\|\bm\gamma \|_*\leq \lambda}(\bm T(\bm x)\bm\xi+\bm h(\bm x))^\top \bm  p-\bm\gamma^\top\bm\xi+\bm\gamma^\top\hat{\bm\xi}_i.
		\end{array}
		\end{equation*}
		Next, we interchange the order of the innermost supremum over $\bm\xi$ and the infimum over $\bm{\gamma}$, which is allowed by the classical minimax theorem \cite[Proposition 5.5.4]{bertsekas2009convex} since  $\bm\gamma$ ranges over a compact set. This yields
		\begin{equation*}
		\begin{array}{rl@{\quad}l}
		\mathcal Z(\bm x)=&\displaystyle\inf_{\lambda\geq 0} \epsilon\lambda +\frac{1}{I}\sum_{i\in[I]} \sup_{\substack{\bm p\geq \bm 0\\\bm W^\top \bm p=\bm q}}\inf_{\|\bm\gamma \|_*\leq \lambda}\sup_{\bm\xi}(\bm T(\bm x)\bm\xi+\bm h(\bm x))^\top \bm  p-\bm\gamma^\top\bm\xi+\bm\gamma^\top\hat{\bm\xi}_i.
		\end{array}
		\end{equation*}
		Evaluating the inner maximization over $\bm\xi$ analytically further yields
		\begin{equation*}
		\begin{array}{rl@{\quad}l}
		\mathcal Z(\bm x)=&\displaystyle\inf_{\lambda\geq 0} \epsilon\lambda +\frac{1}{I}\sum_{i\in[I]} \sup_{\substack{\bm p\geq\bm 0\\\bm W^\top \bm p=\bm q}}\inf_{\|\bm\gamma \|_*\leq \lambda}\bm h(\bm x)^\top \bm p+\bm{\gamma}^\top\hat{\bm\xi}_i+\chi_{\{\bm\gamma=\bm T(\bm x)^\top\bm p\}}(\bm\gamma,\bm p)\\
		=&\displaystyle\inf_{\lambda\geq 0} \epsilon\lambda +\frac{1}{I}\sum_{i\in[I]} \sup_{\substack{\bm p\geq\bm 0\\\bm W^\top \bm p=\bm q}}\bm h(\bm x)^\top \bm p+(\bm T(\bm x)^\top\bm p)^\top\hat{\bm\xi}_i+\chi_{\{\|\bm T(\bm x)^\top\bm p\|_*\leq \lambda\}}(\bm p).
		\end{array}
		\end{equation*}
		The minimization over $\lambda$ in the last problem can also be evaluated analytically. In fact, the unique optimal solution is $\lambda^\star=\sup\left\{\|\bm T(\bm x)^\top\bm p\|_*~:~\bm p\in\mathbb R_+^M,\;\bm W^\top\bm p=\bm q\right\}$. Note that for any $\lambda<\lambda^\star$, the supremum over $\bm p$ would be unbounded, and any $\lambda>\lambda^\star$ would incur an unnecessarily high cost as $\lambda$ is penalized by $\epsilon$ in the objective function. We thus obtain
		\begin{equation}
		\label{eq:case1}
		\begin{array}{rl@{\quad}ll}
		\mathcal Z(\bm x)=&\displaystyle\inf& \displaystyle\epsilon\lambda +\frac{1}{I}\sum_{i\in[I]} \sup_{\substack{\bm p\geq\bm 0\\\bm W^\top \bm p=\bm q}}\bm h(\bm x)^\top \bm p+(\bm T(\bm x)^\top\bm p)^\top\hat{\bm\xi}_i\\
		&\st&\displaystyle \lambda\in\RR_+\\
		&& \displaystyle  \|\bm T(\bm x)^\top\bm p\|_*\leq\lambda \qquad\forall\bm p\in\mathbb R_+^M~:~\bm W^\top\bm p=\bm q. 
		\end{array}
		\end{equation}
		Next, the norm dual to \eqref{eq:case1_tractable_norm} is given by 
		\begin{equation*}
		\|\bm z\|_{*}=\max_{k\in[K]}\left[\max\left\{\frac{z_k}{w_+},-\frac{z_k}{w_-}\right\}\right].
		\end{equation*}
		Thus, the last constraint in \eqref{eq:case1} can be decomposed into a system of $\mathcal O(K)$ linear constraints as follows.
		\begin{equation*}
		\label{eq:case1_tractable_constraints}
		\begin{array}{cl@{\quad}ll}
		&& \|\bm T(\bm x)^\top\bm p\|_*\leq\lambda &\forall \bm p\in\mathbb R_+^M~:~\bm W^\top\bm p=\bm q \\
		\Longleftrightarrow&& \displaystyle\sup_{\substack{\bm p\geq\bm 0\\\bm W^\top\bm p=\bm q}}\mathbf e_k^\top\bm T(\bm x)^\top\bm p/w_+\leq\lambda,\; \displaystyle\sup_{\substack{\bm p\geq \bm 0\\\bm W^\top\bm p=\bm q}}-\mathbf e_k^\top\bm T(\bm x)^\top\bm p/w_-\leq\lambda& \forall k\in[K]\\
		\Longleftrightarrow&&  \exists \bm\phi_k,\bm\psi_k\in\mathbb R^{N_2}~:~\left.\begin{array}{l}\bm q^\top\bm\phi_k\leq \lambda,\; \bm q^\top\bm\psi_k\leq \lambda\\\bm T(\bm x)\mathbf e_k/w_+\leq \bm W\bm\phi_k,\;-\bm T(\bm x)\mathbf e_k/w_-\leq \bm W\bm\psi_k\end{array}\right\}& \forall k\in[K]
		\end{array}
		\end{equation*}
		Here, the second equivalence follows from dualizing the linear programs over $\bm p$, all of which are feasible because problem \eqref{eq:DRSP} has sufficiently expensive recourse. 
		The claim then follows from substituting the last constraint system into \eqref{eq:case1}. 
	\end{proof}

\begin{ex}[Regression]
	\label{ex:regression}
	Consider the least absolute deviations~(LAD) regression problem 
	\begin{equation*}
		\begin{array}{l@{\quad}l}
	\minimize&\displaystyle\EE_{\PP}\left[|\bm x^\top\xit+x_0-\tilde\chi|\right]\\
	\subjectto&\displaystyle(\bm x,x_0)\in\mathcal X.
	\end{array}
	\end{equation*}  
	The objective of this problem is to find the slope $\bm x$ and intercept $x_0$ of an affine function of the explanatory random variables $\xit$ that tightly approximates the independent variable $\tilde{\chi}$ in terms of the mean absolute deviation.  
	In statistics, however, the data-generating distribution $\PP$ of $(\xit,\tilde{\chi})$ is never known. Only the empirical distribution $\PPhat_I$ corresponding to a set of $I$ training samples is given. In this case $\PP$ is ambiguous, and it may make sense to solve the distributionally robust LAD problem
		\begin{equation*}
		\label{eq:regression}
		\begin{array}{l@{\quad}l}
		\minimize&\displaystyle\sup_{\PP\in\Phat}\EE_{\PP}\left[|\bm x^\top\xit+x_0-\tilde\chi|\right]\\
		\subjectto&\displaystyle(\bm x,x_0)\in\mathcal X,
		\end{array}
		\end{equation*}  
		which can be identified as an instance of the two-stage distributionally robust optimization problem \eqref{eq:DRSP} with recourse function
	\begin{equation*}
	Z((\bm x,x_0),(\bm\xi,\chi))=\min\{ y:y\in\mathbb R,\; y \geq \bm x^\top\bm\xi+x_0-\chi,\; y\geq\chi- x_0-\bm x^\top\bm\xi\}.
	\end{equation*}
	From equation \eqref{eq:case1} in the proof of Theorem \ref{thm:case1_tractable} it is evident that the distributionally robust LAD problem is equivalent to
	\begin{equation*}
	\begin{array}{ll@{\quad}l@{\quad}l}
	&\displaystyle\minimize &\displaystyle {\epsilon\|\bm x\|_*} +\frac{1}{N}\sum_{i\in[I]} |\bm x^\top\xih_i+x_0-\hat{\chi}_i|\\
	&\subjectto & \displaystyle(\bm x,x_0)\in\mathcal X. 
	\end{array}
	\end{equation*}
	Note that the above formulation holds for arbitrary norms (not just the one defined in \eqref{eq:case1_tractable}). The second term in the objective function represents the empirical LAD loss, while the first term acts as a regularizer for the regression coefficient $\bm x$. If the reference distance is set to  the infinity norm, then we recover the celebrated LASSO regularizer \cite{tibshirani1996regression,wang2006regularized}. On the other hand,  if the reference distance is set to the 1-norm, then we obtain an infinity norm regularizer which has recently been employed in the context of logistic regression~\cite{shafieezadeh2015distributionally}. 
\end{ex}
\begin{ex}[Multi-Task Learning]
	We can extend Example \ref{ex:regression} to a distributionally robust multi-task learning problem \cite{caruana97,baxter} where several regression problems are to be solved simultaneously. 
		\begin{equation*}
		\label{eq:MTL}
		\begin{array}{l@{\quad}l}
		\minimize&\displaystyle\sup_{\PP\in\Phat}\EE_{\PP}[\|\bm X\xit+\bm x-\tilde{\bm\chi}\|_1]\\
		\subjectto&\displaystyle(\bm X,\bm x)\in\mathcal X
		\end{array}
		\end{equation*}  
	This model has many applications in marketing \cite{lenk1996hierarchical}, healthcare \cite{zhang2012multi}, natural language processing \cite{collobert2008unified}, etc. The distributionally robust multi-task learning model still constitutes an instance of problem \eqref{eq:DRSP}, where the recourse function is now given by 
	\begin{equation*}
	Z((\bm X,\bm x),(\bm\xi,\bm\chi))=\min\{\mathbf e^\top\bm y:\bm y\in\mathbb R^L,\;\bm y \geq \bm X\bm\xi+\bm x-\bm\chi,\;\bm y\geq \bm\chi-\bm x-\bm X\bm\xi\}.
	\end{equation*}
	By Theorem \ref{thm:case1_tractable}, this problem is equivalent to the linear program
	\begin{equation*}
	\begin{array}{rc@{\quad}l@{\quad}l}
	&\displaystyle\minimize &\displaystyle \frac{\epsilon}{\min\{w_+,w_-\}}\max_{k\in[K]}{\|\bm X_{:k}\|_1} +\frac{1}{N}\sum_{i\in[I]} \|\bm X\xih_i+\bm x-\hat{\bm \chi}_i\|_1\\
	&\subjectto & \displaystyle(\bm X,\bm x)\in\mathcal X.
	\end{array}
	\end{equation*}
	Here, the first term in the objective function acts again as a regularizer for the regression coefficient $\bm X$, while the second term represents the empirical LAD loss. 
\end{ex}

\subsection{Complexity Analysis}
Unfortunately, tractability of the distributionally robust linear program \eqref{eq:DRSP} is lost when the reference distance is defined via a $p$-norm with $p>1$ even if all other conditions of Theorem \ref{thm:case1_tractable} remain valid.  
This can be shown by using a reduction from the NP-hard \textsc{Matrix Norm Maximization} problem \cite{S05:MatrixNorm}. 
 \\[-2mm]
 
 \fbox{\parbox{15cm}{ {\centering \textsc{Matrix Norm Maximization}\\}
 		\textbf{Instance.} Given a positive semidefinite matrix $\bm M\in\mathbb S_+^K$. \\
 		\textbf{Question.} For a fixed $q\in[1,\infty)$, compute the matrix norm $\|\bm M\|_{\infty,q}=\max_{\|\bm{z}\|_\infty\leq 1}\|\bm M\bm{z}\|_{q}$.
 	}}
 	\\

	\begin{thm}
		Computing the optimal value of \eqref{eq:DRSP} is {NP}-hard whenever the reference distance is set to $ d(\bm\xi_1,\bm\xi_2)=\|\bm\xi_1-\bm\xi_2\|_p$ for any $p>1$, even if $\bm Q=\bm 0$, $r=1$, $
		\Xi=\mathbb R^K$ and there are no first-stage decisions. 
	\end{thm}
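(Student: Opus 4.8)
The plan is to exploit the general-norm version of equation~\eqref{eq:case1}, which is established inside the proof of Theorem~\ref{thm:case1_tractable}, and then to reduce \textsc{Matrix Norm Maximization} to the evaluation of~\eqref{eq:DRSP}. First I would recall that the derivation leading to~\eqref{eq:case1} never uses the specific structure of the reference norm~\eqref{eq:case1_tractable_norm}; it is valid for an arbitrary reference norm $\|\cdot\|$ with dual norm $\|\cdot\|_*$ (this is precisely the observation exploited in Example~\ref{ex:regression}). Specializing to $d(\bm\xi_1,\bm\xi_2)=\|\bm\xi_1-\bm\xi_2\|_p$, whose dual norm is $\|\cdot\|_{p^*}$ with $p^*=p/(p-1)\in(1,\infty)$, equation~\eqref{eq:case1} shows that, in the absence of first-stage decisions, the optimal value of~\eqref{eq:DRSP} equals
\[
\mathcal Z=\epsilon\,\lambda^\star+\frac{1}{I}\sum_{i\in[I]}\ \sup_{\substack{\bm p\ge\bm 0\\ \bm W^\top\bm p=\bm q}}\ \bm h^\top\bm p+(\bm T^\top\bm p)^\top\xih_i,\qquad \lambda^\star=\sup\bigl\{\|\bm T^\top\bm p\|_{p^*}:\bm p\ge\bm 0,\ \bm W^\top\bm p=\bm q\bigr\},
\]
since the only role of $\lambda$ in~\eqref{eq:case1} is to dominate $\lambda^\star$ while being penalized by $\epsilon$ in the objective, and the residual sum is independent of $\lambda$.

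Next I would design the recourse data so that the linear image $\bm T^\top\bm p$ of the dual feasible polytope is exactly the box image $\{\bm M\bm z:\|\bm z\|_\infty\le 1\}$. Given a \textsc{Matrix Norm Maximization} instance with $\bm M\in\mathbb S_+^K$ and exponent $q=p^*$, I would split the $M=2K$ dual variables as $\bm p=(\bm u,\bm v)\in\RR_+^K\times\RR_+^K$ and set
\[
\bm W=\begin{bmatrix}\mathbb I\\ \mathbb I\end{bmatrix},\quad \bm q=\mathbf e,\quad \bm T=\begin{bmatrix}\bm M\\ -\bm M\end{bmatrix},\quad \bm h=\bm 0,\quad \bm Q=\bm 0,\quad \Xi=\RR^K,
\]
together with a single sample $\xih_1=\bm 0$, radius $\epsilon=1$, and no first-stage decisions. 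The equality $\bm W^\top\bm p=\bm q$ then reads $\bm u+\bm v=\mathbf e$, so $\bm z:=\bm u-\bm v$ ranges precisely over $[-1,1]^K$ as $(\bm u,\bm v)$ ranges over the feasible set, while $\bm T^\top\bm p=\bm M(\bm u-\bm v)=\bm M\bm z$. Hence $\lambda^\star=\max_{\|\bm z\|_\infty\le 1}\|\bm M\bm z\|_{q}=\|\bm M\|_{\infty,q}$. Since $\bm p=\mathbf e/2$ is dual feasible for every $\bm\xi$, the instance has sufficiently expensive recourse, and because $\bm h=\bm 0$ and $\xih_1=\bm 0$ the sum term above vanishes, so the optimal value of~\eqref{eq:DRSP} equals $\epsilon\lambda^\star=\|\bm M\|_{\infty,q}$.

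Finally I would observe that this construction has size polynomial in that of $\bm M$ and that $q=p^*$ is a fixed constant determined by $p$; thus any oracle for the optimal value of~\eqref{eq:DRSP} computes $\|\bm M\|_{\infty,q}$, and NP-hardness follows from that of \textsc{Matrix Norm Maximization}~\cite{S05:MatrixNorm}. The main point to nail down is the matching between the feasible polytope and the box: I must verify that \emph{all} of $[-1,1]^K$---not merely its vertices---is swept out by $\bm u-\bm v$ (immediate here), and that the characteristic-function/epigraph reformulation underlying~\eqref{eq:case1} genuinely forces $\lambda\ge\lambda^\star$. A secondary obstacle is to confirm that the hardness of \textsc{Matrix Norm Maximization} is available for the specific exponent $q=p/(p-1)$ arising from the given $p>1$; this is the only place where the hypothesis $p>1$ (equivalently $q<\infty$) enters, since for $p=1$ the dual norm degenerates to the $\infty$-norm and the problem reverts to the tractable regime of Theorem~\ref{thm:case1_tractable}.
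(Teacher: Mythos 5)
Your proof is correct and follows essentially the same route as the paper: both invoke the general-norm form of equation~\eqref{eq:case1}, build the instance $\bm W=[\mathbb I\;\;\mathbb I]^\top$, $\bm T=[\bm M\;\;-\bm M]^\top$, $\bm h=\bm 0$, $\xih_1=\bm 0$, $\epsilon=1$, and identify the optimal value with $\|\bm M\|_{\infty,q}$ for $q=p/(p-1)$. The only (cosmetic) difference is that you set $\bm q=\mathbf e$ explicitly and eliminate $\lambda$ analytically, whereas the paper states $\bm q=\bm 0$ (apparently a typo, since its displayed reduction uses the constraint $\bm p_++\bm p_-=\mathbf e$ and the objective term $\mathbf e^\top\bm y_1$, both of which require $\bm q=\mathbf e$) and keeps the LP form.
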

	\begin{proof}
		Fix $p\in(1,\infty]$ and set $q=\frac{p}{p-1}\in[1,\infty)$. For any instance $\bm M\in\mathbb S_+^K$ of the \textsc{Matrix Norm Maximization} problem, construct an instance of the distributionally robust linear program \eqref{eq:DRSP} as follows. Set the parameters of the recourse problem \eqref{eq:Recourse} to $\bm Q=\bm 0$, $\bm q=\bm 0$, $\bm T(\bm x)=\left[\bm M\;\;-\bm M\right]^\top$, $\bm h(\bm x)=\bm 0$ and $\bm W=\left[\mathbb I\;\;\mathbb I\right]^\top$. Moreover, assume that there is only one sample $\xih_1=\bm 0$, and set $\epsilon=1$.  Equation \eqref{eq:case1} in the proof of Theorem \ref{thm:case1_tractable} implies that problem \eqref{eq:DRSP} is equivalent to
	\begin{equation*}
	\begin{array}{rc@{\quad}l@{\quad}l}
	&\displaystyle\minimize_{} & \lambda +\mathbf e^\top\bm y_1\\
	&\subjectto &\displaystyle \lambda\in\mathbb R_+,\; \bm y_1\in\mathbb R^{N_2}\\
	&& \displaystyle\bm 0\leq \bm y_1\\
	&& \displaystyle\|\bm M(\bm p_+-\bm p_-)\|_{q}\leq\lambda \quad\forall \bm p_+,\bm p_-\in\mathbb R_+^K~:~\bm p_++\bm p_-=\mathbf e.
	\end{array}
	\end{equation*}
Note that $\bm y_1=\bm 0$ at optimality irrespective of $\lambda$, and thus the optimal value of this problem coincides with 
	\begin{equation*}
	\begin{array}{rc@{\quad}l@{\quad}l}
	&\max\left\{ \|\bm M(\bm p_+-\bm p_-)\|_{q}:\bm p_+,\bm p_-\in\mathbb R_+^K,\;\bm p_++\bm p_-=\mathbf e\right\} =\displaystyle \max_{\|\bm{z}\|_\infty\leq 1}\|\bm M\bm{z}\|_{q}.
	\end{array}
	\end{equation*}
	We conclude that computing the optimal value of \eqref{eq:DRSP} is at least as hard as solving the NP-hard \textsc{Matrix Norm Maximization} problem. 
\end{proof}

\section{Numerical Results}
\label{sec:numerical}
We now assess the computational and statistical properties of the two-stage distributionally robust linear programs over 2-Wasserstein balls studied in Section~\ref{sec:CP}. All optimization problems are solved with MOSEK~v7 using the YALMIP interface~\cite{yalmip} on an 8-core 3.4~GHz computer with 16~GB~RAM.

\subsection{Approximation Quality}
We first assess the error introduced by approximating the copositive cone $\mathcal C$ in \eqref{eq:case2} with its semidefinite inner approximation~$\mathcal C^0$. To this end, we study recourse problems of the form 
\begin{equation}
\label{eq:Recourse_hard}
Z(\bm x,\bm{\xi})=\inf_{\bm y\in\mathbb R^{N_2}_+} \left\{ \mathbf e^\top \bm y : \bm A\bm\xi-\bm b\leq \bm y\right\} = \sum_{n\in[N_2]}\max\{\bm A_{n:}^\top\bm{\xi}-b_n,0\}= \max_{\bm\ell\in\{0,1\}^{N_2}}(\bm A\bm{\xi}-\bm b)^\top \bm{\ell},
\end{equation}
where $\bm A\in[0,1]^{N_2\times K}$, $\bm b\in[0,1]^K$ and the random vector $\xit\in\RR^K$ is supported on~$\Xi=[0,1]^K$. Note that the wait-and-see cost is independent of $\bm x$ and representable as a sum of $N_2$ max functions, which can be expressed as the pointwise maximum of $2^{N_2}$ affine functions in~$\bm{\xi}$. Recourse problems of this type are hard both in the stochastic as well as in the robust setting. Indeed, evaluating the expectation of $Z(\bm x,\xit)$ is $\#$P-hard even if $N_2=1$ and $\xit$ follows the uniform distribution on $\Xi$ \cite[Corollary~1]{HKW16:SPComplexity}. Similarly, evaluating the worst case of $Z(\bm x,\bm{\xi})$ over all $\bm\xi\in \Xi$ is strongly NP-hard \cite[Example~1.1.9]{Hanasusanto15:PhD}. The following proposition shows that the worst-case expectation of $Z(\bm x,\xit)$ over all distributions of $\xit\in\Xi$ within a given 2-Wasserstein ball can be expressed as the optimal value of a second-order cone program (SOCP) with $\mathcal O(2^{N_2})$ constraints.

\begin{prop}
	\label{prop:SOCP_equiv}
 If $\Phat=\mathcal B_\epsilon^{2}(\hat{\mathbb P}_I)$, then the worst-case expectation~\eqref{eq:WCE} of the wait-and-see cost \eqref{eq:Recourse_hard} amounts~to 
 \begin{equation}
\label{eq:SOCP_equiv}
 \begin{array}{rlll}
\displaystyle\mathcal Z(\bm x)=\inf & \displaystyle\epsilon^2\lambda +\frac{1}{I}\sum_{i\in[I]} s_i\\
 \st &\displaystyle \lambda\in\mathbb R_+,\; s_i\in\mathbb R_+,\;\bm\theta_{i\bm{\ell}},\bm\eta_{i\bm{\ell}}\in\RR_+^K&\forall i\in[I]\;\forall \bm\ell\in\{0,1\}^{N_2}\\
 &\displaystyle  s_i+b+\lambda \|\xih_i\|_2^2-\mathbf e^\top\bm\eta_{i\bm{\ell}}\geq 0& \forall i\in[I]\;\forall \bm\ell\in\{0,1\}^{N_2}\\
 &\left\|\begin{bmatrix}A^\top\bm{\ell}+2\lambda\xih_i+\bm\theta_{i\bm{\ell}}-\bm\eta_{i\bm{\ell}}\\s_i+\bm b^\top\bm{\ell}+\lambda \|\xih_i\|_2^2-\mathbf e^\top\bm\eta_{i\bm{\ell}}-\lambda\end{bmatrix}\right\|_2\leq s_i+\bm b^\top\bm{\ell}+\lambda \|\xih_i\|_2^2-\mathbf e^\top\bm\eta_{i\bm{\ell}}+\lambda& \forall i\in[I]\;\forall \bm\ell\in\{0,1\}^{N_2}.
 \end{array}
 \end{equation}
\end{prop}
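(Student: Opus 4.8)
The plan is to start from the strong dual robust reformulation \eqref{eq:dual_semiinf} of Theorem~\ref{thm:moment_problem}, specialized to $r=2$ and $d(\bm\xi_1,\bm\xi_2)=\|\bm\xi_1-\bm\xi_2\|_2$, which writes
\[
\mathcal Z(\bm x)=\inf_{\lambda\geq 0}\;\epsilon^2\lambda+\frac 1I\sum_{i\in[I]}\;\sup_{\bm\xi\in[0,1]^K}\Big[Z(\bm x,\bm\xi)-\lambda\|\bm\xi-\xih_i\|_2^2\Big].
\]
Substituting the max-representation $Z(\bm x,\bm\xi)=\max_{\bm\ell\in\{0,1\}^{N_2}}(\bm A\bm\xi-\bm b)^\top\bm\ell$ from \eqref{eq:Recourse_hard} and exchanging the finite maximum over $\bm\ell$ with the supremum over $\bm\xi$ (legitimate because the index set $\{0,1\}^{N_2}$ is finite) turns each inner problem into $\max_{\bm\ell}\sup_{\bm\xi\in[0,1]^K}[(\bm A\bm\xi-\bm b)^\top\bm\ell-\lambda\|\bm\xi-\xih_i\|_2^2]$. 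I would then introduce one epigraphical variable $s_i$ per sample to absorb the maximum over $\bm\ell$, so the objective becomes $\epsilon^2\lambda+\frac1I\sum_i s_i$ subject to $s_i\geq\sup_{\bm\xi\in[0,1]^K}[(\bm A\bm\xi-\bm b)^\top\bm\ell-\lambda\|\bm\xi-\xih_i\|_2^2]$ for every $i$ and $\bm\ell$.

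The crux is to evaluate each inner supremum in closed conic form. For fixed $\lambda> 0$, $i$ and $\bm\ell$, the objective $-\lambda\|\bm\xi\|_2^2+(\bm A^\top\bm\ell+2\lambda\xih_i)^\top\bm\xi-\bm b^\top\bm\ell-\lambda\|\xih_i\|_2^2$ is strictly concave in $\bm\xi$, and the box $[0,1]^K$ is compact with nonempty interior, so Slater's condition holds and Lagrangian strong duality applies. Dualizing $\bm\xi\geq\bm 0$ and $\bm\xi\leq\mathbf e$ with multipliers $\bm\theta_{i\bm\ell},\bm\eta_{i\bm\ell}\in\RR_+^K$ and maximizing the resulting unconstrained concave quadratic over $\bm\xi$ gives the value $\frac{1}{4\lambda}\|\bm A^\top\bm\ell+2\lambda\xih_i+\bm\theta_{i\bm\ell}-\bm\eta_{i\bm\ell}\|_2^2-\bm b^\top\bm\ell-\lambda\|\xih_i\|_2^2+\mathbf e^\top\bm\eta_{i\bm\ell}$. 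Writing $\bm v_{i\bm\ell}=\bm A^\top\bm\ell+2\lambda\xih_i+\bm\theta_{i\bm\ell}-\bm\eta_{i\bm\ell}$ and $u_{i\bm\ell}=s_i+\bm b^\top\bm\ell+\lambda\|\xih_i\|_2^2-\mathbf e^\top\bm\eta_{i\bm\ell}$, the epigraph inequality becomes the rotated cone $4\lambda\,u_{i\bm\ell}\geq\|\bm v_{i\bm\ell}\|_2^2$ for some $\bm\theta_{i\bm\ell},\bm\eta_{i\bm\ell}\geq\bm 0$, which I would finally recast in the standard form $\|(\bm v_{i\bm\ell};\,u_{i\bm\ell}-\lambda)\|_2\leq u_{i\bm\ell}+\lambda$ using $(u+\lambda)^2-(u-\lambda)^2=4\lambda u$. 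The accompanying sign requirement $u_{i\bm\ell}\geq 0$ is exactly the penultimate inequality of \eqref{eq:SOCP_equiv}, and collecting everything reproduces the claimed SOCP.

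The main obstacle I anticipate is the boundary case $\lambda=0$, where the division by $4\lambda$ is invalid. Here I would argue separately that the inner supremum degenerates to the linear program $\sup_{\bm\xi\in[0,1]^K}(\bm A\bm\xi-\bm b)^\top\bm\ell$, whose LP dual $\min\{\mathbf e^\top\bm\eta-\bm b^\top\bm\ell:\bm\eta\geq\bm A^\top\bm\ell,\ \bm\eta\geq\bm 0\}$ coincides with the limiting behavior of the cone representation: setting $\lambda=0$ in $\|(\bm v_{i\bm\ell};\,u_{i\bm\ell})\|_2\leq u_{i\bm\ell}$ forces $\bm v_{i\bm\ell}=\bm 0$, i.e.\ $\bm\eta_{i\bm\ell}=\bm A^\top\bm\ell+\bm\theta_{i\bm\ell}\geq\bm A^\top\bm\ell$, together with $u_{i\bm\ell}\geq 0$. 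This shows the second-order cone constraints remain exact at $\lambda=0$, so that no values of $\lambda$ are spuriously added or excluded and the two infima over $\lambda$ agree. A minor secondary check is that strong duality for the inner concave maximization holds uniformly over $i$ and $\bm\ell$, which is immediate from the fixed compact box and the common Slater point $\bm\xi=\tfrac12\mathbf e$.
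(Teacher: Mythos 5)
Your proposal is correct and follows essentially the same route as the paper: invoke the dual reformulation of Theorem~\ref{thm:moment_problem}, exchange the finite maximum over $\bm\ell$ with the supremum over $\bm\xi$, introduce epigraphical variables $s_i$, and convert each resulting semi-infinite constraint via quadratic-programming duality into the hyperbolic constraint $\tfrac14\|\bm v_{i\bm\ell}\|_2^2\le\lambda u_{i\bm\ell}$ and thence into a standard second-order cone constraint. Your explicit treatment of the boundary case $\lambda=0$ (where the SOC constraint degenerates to the exact LP dual of the box-constrained linear maximization) is a detail the paper passes over silently, but it does not change the argument.
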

\begin{proof}
	By Theorem~\ref{thm:moment_problem}, the worst-case expectation \eqref{eq:WCE} is representable as
	\begin{equation*}
	\begin{array}{rc@{\quad}l@{\quad}l}
	\mathcal Z(\bm x)=\displaystyle\inf_{\lambda\in\mathbb R_+}\displaystyle\epsilon^2\lambda+\frac{1}{I}\sum_{i\in[I]}\max_{\bm\xi\in\Xi}\max_{\bm\ell\in\{0,1\}^{N_2}}\bm{\ell}^\top\bm A\bm{\xi}-\bm b^\top\bm{\ell}-\|\bm\xi-\xih_i\|_2^2.
	\end{array}
	\end{equation*}
	Thus, by introducing auxiliary epigraphical variables $s_i$, $i\in[I]$, we can reformulate the above optimization problem as the semi-infinite linear program
	\begin{equation*}
	\begin{array}{rc@{\quad}l@{\quad}l}
	\mathcal Z(\bm x)=&\displaystyle\inf_{} & \displaystyle\epsilon^2\lambda +\frac{1}{I}\sum_{i\in[I]} s_i\\
	&\st &\displaystyle \lambda\in\mathbb R_+,\; s_i\in\mathbb R_+&\forall i\in[I]\\
	&&\displaystyle \max_{\bm\xi\in\Xi}\bm{\ell}^\top\bm A\bm{\xi}-\bm{b}^\top\bm{\ell}-\|\bm\xi-\xih_i\|_2^2\leq s_i&\forall i\in[I]\;\forall \bm\ell\in\{0,1\}^{N_2}.
	\end{array}
	\end{equation*}
	Strong quadratic programming duality implies that the $(i,\bm{\ell})$-th semi-infinite constraint is satisfied if and only if there exist $\bm{\theta}_{i\bm\ell},\bm{\eta}_{i\bm\ell}\in\RR_+^K$ that satisfy the hyperbolic constraint
	\begin{equation*}
	\begin{array}{rcll}
	\displaystyle\frac{1}{4}\left\|\bm A^\top\bm{\ell}+2\lambda\xih_i+\bm\theta_{i\bm{\ell}}-\bm\eta_{i\bm{\ell}}\right\|_2^2\leq \lambda\left(s_i+\bm{b}^\top\bm{\ell}+\lambda \|\xih_i\|_2^2-\mathbf e^\top\bm\eta_{i\bm{\ell}}\right),
	\end{array}
	\end{equation*}
%
which is equivalent to the standard SOCP constraints
	 \begin{equation*}
	 \begin{array}{l}
	 \displaystyle  \lambda\geq 0,\,s_i+b+\lambda \|\xih_i\|_2^2-\mathbf e^\top\bm\eta_{i\bm{\ell}}\geq 0,\\[1ex]
	 \displaystyle \left\|\begin{bmatrix}A^\top\bm{\ell}+2\lambda\xih_i+\bm\theta_{i\bm{\ell}}-\bm\eta_{i\bm{\ell}}\\s_i+\bm b^\top\bm{\ell}+\lambda \|\xih_i\|_2^2-\mathbf e^\top\bm\eta_{i\bm{\ell}}-\lambda\end{bmatrix}\right\|_2\leq s_i+\bm b^\top\bm{\ell}+\lambda \|\xih_i\|_2^2-\mathbf e^\top\bm\eta_{i\bm{\ell}}+\lambda.
	 \end{array}
	 \end{equation*} Thus, the worst-case expectation \eqref{eq:WCE} indeed coincides with the optimal value of \eqref{eq:SOCP_equiv}. 
\end{proof}

As it is hard to evaluate $\mathcal Z(\bm x)$ exactly---as reflected by the exponential size of the SOCP~\eqref{eq:SOCP_equiv}---we now investigate two efficient methods for evaluating $\mathcal Z(\bm x)$ approximately: the $\mathcal C_0$ approximation of the equivalent copositive program~\eqref{eq:CP} and a state-of-the-art quadratic decision rule approximation. The $\mathcal C_0$ approximation is obtained by solving~\eqref{eq:CP} with inputs  $\bm S=\mathbb I$, $\bm t=\mathbf e$, $\bm Q=\bm 0$, $\bm q=\mathbf e$, $\bm T(\bm x)=[\bm A^\top\;\bm 0]^\top$, $\bm h(\bm x)=[-\bm b^\top\;\bm 0^\top]^\top$, and $\bm W=[\mathbb I\;\mathbb I]^\top$, while approximating the copositive cone $\mathcal C$ with $\mathcal C_0$. 

In order to develop decision rule approximation, we first use \cite[Theorem~14.60]{RW09:VA} to reformulate \eqref{eq:WCE} as
\begin{equation}
\label{eq:recourse_equiv}
\mathcal Z(\bm x)=\inf_{\bm y\in\mathcal L_{K,N_2}} \left\{ \sup_{\PP\in\Phat}\mathbb E_{\mathbb P}\left[\mathbf e^\top \bm y(\xit)\right]:
\bm A\bm\xi-\bm b\leq \bm y(\bm{\xi})\; \forall\bm{\xi}\in\Xi,~ \bm y(\bm{\xi})\geq \bm 0\; \forall\bm{\xi}\in\Xi \right\},
\end{equation}
where $\mathcal L_{K,N_2}$ denotes the linear space of all measurable functions from $\mathbb R^K$ to $\mathbb R^{N_2}$. A tractable upper bound on $\mathcal Z(\bm x)$ is obtained by restricting $\mathcal L_{K,N_2}$ to the subspace of all affine functions; see {\em e.g.}~\cite{GK16:Wasserstein}. A tighter tractable upper bound can be obtained, however, by restricting $\mathcal L_{K,N_2}$ to the subspace of all quadratic functions and by conservatively approximating the emerging semi-infinite constraints by semidefinite constraints using the approximate $\mathcal S$-lemma. Quadratic decision rule approximations of this type are also studied in~\cite{HKWZ2015:newsvendor}.

We run numerical experiments for different values of the uncertainty dimension~$K$ and the sample size~$I$ and set the Wasserstein radius to $\epsilon=1/\sqrt{I}$, thus enforcing the scaling rule advocated in~\cite{blanchet2016sample} and~\cite{ZG15:Wasserstein}.\footnote{We also ran all experiments with $\epsilon=1$ and $\epsilon=1/I$ but did not observe any qualitative changes in the results.} All results are averaged over $100$ instances generated randomly as follows. We sample the dimension $N_2$ of the wait-and-see decision uniformly at random from $\{1,2,\ldots,\lceil\log(K+1)\rceil\}$, which guarantees that the SOCP~\eqref{eq:SOCP_equiv} grows at most polynomially with $K$ and $I$. Next, we sample $\bm A$ uniformly  from $[0,1]^{N_2\times K}$ and $\bm b$ uniformly from $[0,\mathbf e^\top\bm A_{1:}]\times\cdots\times [0,\mathbf e^\top\bm A_{N_2:}]$.  We then generate independent training samples $\{\xih_i\}_{i\in[I]}$ from the uniform distribution on~$[0,1]^K$. Lastly, we evaluate the worst-case expectation \eqref{eq:WCE} exactly by solving the SOCP~\eqref{eq:SOCP_equiv}, and also approximately by computing the $\mathcal C^0$ and the quadratic decision rule approximations. 

Table~\ref{tab:gaps} reports the optimality gaps of the two approximations relative to the exact worst-case expectation, averaged across all solvable instances. While the optimality gaps of the $\mathcal C^0$ approximation remain consistently below $2.3\%$, the state-of-the-art quadratic decision rule approximation can incur alarmingly large optimality gaps of more than $100\%$. On the other hand, while MOSEK is  able to solve all instances of the quadratic decision rule approximation, it encounters numerical difficulties when solving some of the larger instances of the~$\mathcal C^0$ approximation. For $K=64$, for instance, the underlying semidefinite constraints involve blocks of the size $139\times 139$, which pose a distinct challenge for state-of-the-art interior point solvers. The percentages of all  instances of the $\mathcal C^0$ approximation that could be solved to global optimality are reported in Table~\ref{tab:solvable}. The average runtimes of both approximations are presented in Table~\ref{tab:solvetime}. 

\begin{table}[h!]
	\small
	\color{black}
	\centering
	\begin{tabular}{c|c|rr|rr|rr|rr|rr|rr|rr|}
		\multicolumn{2}{c}{}	 & \multicolumn{14}{c}{$K$} \\ \cline{3-16}
		\multicolumn{2}{c}{} 	 & \multicolumn{2}{c}{1} & \multicolumn{2}{c}{2} & \multicolumn{2}{c}{4} & \multicolumn{2}{c}{8} & \multicolumn{2}{c}{16} & \multicolumn{2}{c}{32} & \multicolumn{2}{c}{64}\\[0.0mm] \cline{3-16}
		\multirow{ 8}{*}{$I$}
		&5 & 0.0&0.8   & 0.0&2.8 &  0.0&5.2  & 0.0   & 3.9      & 0.5 & 6.7 & 0.3 & 5.5  & 0.5 & 3.0\\
		&10 &  0.0&2.2 & 0.0&6.1 & 0.0&11.0    & 0.0 & 13.0     & 0.5 & 15.0 & 0.4 & 16.1 & 0.8 & 10.2\\
		&20 &  0.0&4.9 & 0.0&10.6& 0.0& 17.6  & 0.0 & 20.0    & 0.5 & 27.6 & 0.5 & 33.1 & 1.9 & 19.0\\
		&40 &  0.0&8.6 & 0.0&14.9  & 0.0& 25.4   & 0.0 & 28.2  & 0.8 & 55.0 &  0.6 & 65.2 & 1.8 & 54.0\\
		&80 &  0.0&12.4& 0.0&19.9&  0.0&33.2  & 0.0 & 42.0   & 0.9 & 77.1 &  0.8 & 117.1 & 0.0 & 121.1\\
		&160 & 0.0&18.2& 0.0&26.8&  0.0& 43.1 & 0.0 & 51.2  & 1.3 & 129.6 & 1.3 & 201.2 & - & 220.8\\
		&320&  0.0&25.3& 0.0&34.4 & 0.0&58.9  & 0.0 & 74.3  & 2.3 & 237.7 & 3.7 & 254.3 & - & 416.6\\
		&640 & 0.0&33.4& 0.0&43.7 &  0.0&79.1 & 0.0 &102.3    & 2.3 & 343.9 & 0.2 & 498.3 & - & 1137.1 \\
		\cline{3-16}	
	\end{tabular}
	\caption{\color{black} Optimality gaps (in $\%$) of the $\mathcal C^0$ approximation (left) and the quadratic decision rule approximation (right). 
		\label{tab:gaps}}
\end{table}

{\color{black}
\begin{table}[h!]
	\small
	\color{black}
	\centering
	\begin{tabular}{|rrrrrrrrrrrrr|}
		\multicolumn{10}{c}{$(I, K)$}	  \\ \cline{1-10}
		\multicolumn{1}{r}{(10,64)} & (20,64) & (40,64) & (80,64) &  (160,32) & (160,64) & (320,32) & (320,64) & (640,32) & \multicolumn{1}{r}{(640,64)}\\ \cline{1-10}
		 95 & 48 & 6 & 1 & 88 & 0 & 19 & 0 & 2 & 0 \\\cline{1-10}
		\cline{3-9}
	\end{tabular}
	\caption{\color{black} Percentage of solvable instances from using the $\mathcal C^0$ approximation. We report only those $(I,K)$ pairs for which fewer than $100\%$ of all instances were solved to optimality. 
		\label{tab:solvable}}
\end{table}
}

\begin{table}[h!]
	\hspace{-12mm}
	\small
	\color{black}
	\centering
	\begin{tabular}{c|c|rr|rr|rr|rr|rr|rr|rr|}
		\multicolumn{2}{c}{}	 & \multicolumn{14}{c}{$K$} \\ \cline{3-16}
		\multicolumn{2}{c}{} 	 & \multicolumn{2}{c}{1} & \multicolumn{2}{c}{2} & \multicolumn{2}{c}{4} & \multicolumn{2}{c}{8} & \multicolumn{2}{c}{16} & \multicolumn{2}{c}{32} & \multicolumn{2}{c}{64}\\[0.0mm] \cline{3-16}
		\multirow{ 8}{*}{$I$}
		&5 & $<$0.1	&$<$0.1	&$<$0.1 &$<$0.1 &  0.1	&0.1 & 0.5	&0.3 	&2.3	&0.5 		& 41.5     &6.2  	& 1375.9 & 230.4\\
		&10 &$<$0.1	&$<$0.1 & 0.2  	&0.1    & 0.4	&0.1 	& 0.8	&0.5	&5.2	&1.0 	& 93.9 		& 12.1 	& 3091.6 &355.1\\
		&20 &  0.1	&0.1    & 0.5	&0.1   	& 0.7	&0.2	& 1.9	&0.9	&11.6	&1.4  &194.2 		 & 26.6 & 5799.2 &490.2\\
		&40 &  0.3	&0.2   	& 1.6   &0.3    & 1.2	&0.3	& 3.0	&0.7	&25.0	&3.0	&  398.7     	&52.2 & 11276.2 & 1617.3\\	
		&80 &  0.6	&0.3   	&  3.3  &0.5    &  2.2	&0.5	&  8.1	&1.1 	&53.5	&6.7 	&905.9 			& 116.9 & 19887.3 & 3134.0\\
		&160 & 1.3	&0.8  	& 3.2   &1.1 	&  2.6	&0.5	& 19.4	&2.4	& 109.9	&17.7 	& 1956.5 		& 253.7  & - & 7586.9\\
		&320&  0.8	&2.0 	& 2.1  	&0.5    & 12.7	&1.0	&41.5	&5.2	& 251.4	&40.8&3715.2 		& 415.4 & - & 16511.3\\
		&640 &  1.7	&0.7   	& 31.3  	&1.1    &  15.5	&2.1	& 86.8	&15.1	& 514.7	&79.1 	& 9777.1		& 1441.9 & - & 22365.2 \\
		\cline{3-16}	
	\end{tabular}
	\caption{\color{black} Solution times in seconds of the $\mathcal C^0$ approximation (left) and the quadratic decision rule approximation (right). 
		\label{tab:solvetime}}
\end{table}

\subsection{Out-of-Sample Performance}

Next, we assess the out-of-sample performance of different data-driven policies in the context of a multi-item newsvendor problem, where an inventory planner has to select a vector $\bm x\in\RR_+^K$ of order quantities for $K$ different products at the beginning of a sales period. 
We assume that the total order quantity $\mathbf e^\top\bm x$ may not exceed a given budget $B$. The demands of the products can be described by a random vector $\xit\in\RR_+^K$ that follows an unknown multivariate distribution $\PP^\star$. We also assume that there are no ordering costs but that excess  inventory of the $k$-th product incurs a per-unit holding cost of $b_k$, while unmet demand incurs a per-unit stock-out cost of $s_k$. The total cost of an order $\bm x$ incurred in scenario $\bm \xi$ thus amounts to 
\begin{equation*}
	Z(\bm x,\bm{\xi})=\inf_{\bm y\in\mathbb R^K}  \left\{ \mathbf e^\top\bm y : \diag(\bm b)(\bm x-\bm{\xi})\leq \bm y,~ \diag(\bm s)(\bm{\xi}-\bm x)\leq \bm y \right\},
\end{equation*}
where $\bm b=(b_1,\ldots,b_K)^\top$ and $\bm s=(s_1,\ldots, s_K)^\top$. By construction, this recourse problem has sufficiently expensive recourse as well as complete recourse. We assume that the inventory planner is both risk-averse and ambiguity-averse and thus solves the two-stage distributionally robust linear program
\begin{equation}
\label{eq:Newsvendor_DRO}
	\begin{array}{ll@{\quad}l@{\quad}l}
		&\minimize&\displaystyle\sup_{\PP\in\Phat}\PP\text{-CVaR}_\rho\left[Z(\bm x,\xit)\right]\\
		&\subjectto&\displaystyle\bm x\in\mathcal \RR_+^K\\
		&&\mathbf e^\top\bm x\leq B,
	\end{array}
\end{equation}
which minimizes the worst-case conditional value-at-risk (CVaR) of $Z(\bm x,\xit)$ at level $\rho\in(0,1]$, where the worst case is taken over all distributions within some ambiguity set $\Phat$. For any fixed $\PP\in \Phat$, the $\PP$-CVaR of $Z(\bm x,\tilde{\bm\xi})$ at level $\rho$ is defined through
\[
 	\PP\text{-CVaR}_\rho\left[Z(\bm x,\tilde{\bm\xi})\right] = \inf_{\theta\in\RR}\theta+\frac{1}{\rho} \mathbb E_{\mathbb P}\left[\max\{Z(\bm x,\tilde{\bm\xi})-\theta,0\}\right]
\]
and can be viewed as the conditional expectation of $Z(\bm x,\tilde{\bm\xi})$ above its $(1-\rho)$-percentile under $\PP$ \cite{RU00:CVaR}. {\color{black} Note that the worst-case CVaR can be viewed as an instance of the worst-case optimized certainty equivalent~\eqref{eq:WCE_RA} corresponding to the disutility function $U(y)=\max\{y,0\}$.}

In the following we review different approaches to construct $\Phat$ from $I$ demand samples $\xih_1,\ldots,\xih_I$, and we show that in each case problem~\eqref{eq:Newsvendor_DRO} can be reformulated as a conic program. The first possibility is to set $\Phat$ to the $2$-Wasserstein ball $\mathcal B_\epsilon^{2}(\hat{\mathbb P}_I)$ around the empirical distribution on the demand samples as in Section~\ref{sec:CP}.


\begin{prop}
\label{prop:Wass-Newsvendor}
If $\Phat=\mathcal B_\epsilon^{2}(\hat{\mathbb P}_I)$, then \eqref{eq:Newsvendor_DRO} is equivalent to the copositive program 	
	\begin{equation}
	\label{eq:CP_CVaR}
	\begin{array}{llll}
	&\hspace{-6mm}\displaystyle\minimize{} & \displaystyle\theta+\frac{1}{\rho}\left(\epsilon^2\lambda +\frac{1}{I}\sum_{i\in[I]}s_i\right)\\
	&\hspace{-6mm}\subjectto &\displaystyle\bm x\in\RR_+^K,\; \lambda, s_i\in\RR_+,\;\theta\in\RR,\;\bm\psi_{i},\bm\phi_{i}\in\RR^{N_2+J}&\forall i\in[I]\\
	&&\mathbf e^\top\bm x\leq B\\
	&&\displaystyle\hspace{-1mm}\begin{bmatrix}
	\displaystyle\lambda\mathbb I&\displaystyle-\frac{1}{2}\bm T(\bm x)^\top&\displaystyle-\lambda\xih_i\\
	\displaystyle-\frac{1}{2}\bm T(\bm x)&\displaystyle \bm W\diag(\bm\phi_{i})\bm W^\top&\displaystyle \frac{1}{2}(\bm W\bm\psi_{i}-\bm h(\bm x))\\\displaystyle
	-\lambda\xih_i^\top &\displaystyle\frac{1}{2}(\bm W\bm\psi_{i}-\bm h(\bm x))^\top& \displaystyle s_i+\theta-\mathbf e^\top(\bm\psi_{i}+\bm\phi_i)+\lambda\|\xih_i\|_2^2
	\end{bmatrix}\succeq_{\mathcal C}\bm 0&\forall i\in[I],
	\end{array}
	\end{equation}
    where $\bm T(\bm x)=[-\diag(\bm b)\;\diag(\bm s)]^\top$, $\bm h(\bm x)=[\bm x^\top\diag(\bm b)$, $-\bm x^\top\diag(\bm s)]^\top$, and $\bm W=[\mathbb I\;\;\mathbb I]^\top$.
   \end{prop}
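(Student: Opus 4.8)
The plan is to recognize the worst-case CVaR objective in \eqref{eq:Newsvendor_DRO} as a worst-case optimized certainty equivalent of the form \eqref{eq:WCE_RA} and then to invoke the risk-averse copositive reformulation of Corollary~\ref{prop:CP_RA}. Concretely, I would first establish that
\[
\sup_{\PP\in\Phat}\PP\text{-CVaR}_\rho\left[Z(\bm x,\xit)\right]=\inf_{\theta\in\RR}\;\theta+\sup_{\PP\in\Phat}\EE_\PP\left[\tfrac{1}{\rho}\max\{Z(\bm x,\xit)-\theta,0\}\right],
\]
where the interchange of $\sup_\PP$ and $\inf_\theta$ is justified by a minimax argument (the bracketed integrand is linear in $\PP$ and convex in $\theta$, while $\Phat=\mathcal B_\epsilon^2(\PPhat_I)$ is convex and weakly compact). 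This exhibits the objective as the worst-case optimized certainty equivalent \eqref{eq:WCE_RA} for the non-decreasing, convex, piecewise affine disutility $U(y)=\tfrac{1}{\rho}\max\{y,0\}$, that is, the two-piece disutility $U(y)=\max\{y,0\}$ highlighted in the text with the factor $1/\rho$ folded in, so that $T=2$, $\alpha_1=\beta_1=\beta_2=0$ and $\alpha_2=1/\rho$. Hence Corollary~\ref{prop:CP_RA} applies directly.

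Next, I would specialize the copositive program \eqref{eq:CP_RA} to the newsvendor data. Since the ordering cost is deterministic we have $\bm Q=\bm 0$, and since $\Xi=\RR_+^K$ the support set carries no inequalities, so $J=0$ and the extended recourse parameters of \eqref{eq:expanded_params} collapse to the original data: $\Qb=\bm 0$, $\Wb=\bm W$, $\Tb(\bm x)=\bm T(\bm x)$, $\hb(\bm x)=\bm h(\bm x)$ and $\rb=\bm q=\mathbf e$. In particular $\rbl_j=1$ for every $j$, whence $\rb^\top\bm\psi_{it}=\mathbf e^\top\bm\psi_{it}$ and $\sum_j\phi_{itj}\rbl_j^2=\mathbf e^\top\bm\phi_{it}$. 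The two disutility pieces are then treated separately. For $t=1$ one has $\alpha_1=\beta_1=0$, and the corresponding copositive constraint encodes $s_i\ge\sup_{\bm\xi\in\Xi}\{-\lambda\|\bm\xi-\xih_i\|_2^2\}=0$, the supremum being attained at $\bm\xi=\xih_i\in\Xi$; this is precisely why \eqref{eq:CP_CVaR} retains only the sign restriction $s_i\in\RR_+$ together with a single matrix per sample. For $t=2$, substituting $\kappa_{i2}$ into the lower-right entry turns the block of \eqref{eq:CP_RA} into a $(3K+1)\times(3K+1)$ copositive matrix that carries the factor $\alpha_2=1/\rho$ in front of $\bm T(\bm x)$ and $\bm h(\bm x)$.

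Finally, I would eliminate the factors $1/\rho$ by a positive rescaling that preserves copositivity. Multiplying the surviving $t=2$ matrix by $\rho>0$ and setting $\tilde\lambda=\rho\lambda$, $\tilde s_i=\rho s_i$, $\tilde{\bm\psi}_i=\rho\bm\psi_{i2}$, $\tilde{\bm\phi}_i=\rho\bm\phi_{i2}$ while keeping $\theta$ fixed, I expect every block to match \eqref{eq:CP_CVaR}, with lower-right entry $\tilde s_i+\theta-\mathbf e^\top(\tilde{\bm\psi}_i+\tilde{\bm\phi}_i)+\tilde\lambda\|\xih_i\|_2^2$; simultaneously the objective $\theta+\epsilon^2\lambda+\tfrac1I\sum_i s_i$ of \eqref{eq:CP_RA} becomes $\theta+\tfrac1\rho(\epsilon^2\tilde\lambda+\tfrac1I\sum_i\tilde s_i)$, exactly the objective of \eqref{eq:CP_CVaR}. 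Since the recourse problem has complete recourse (take $\bm y^+=\mathbf e$, so that $\bm W\mathbf e=[\mathbf e^\top\;\mathbf e^\top]^\top>\bm 0$), assertion~(i) of Corollary~\ref{prop:CP_RA} with $\delta=0$ upgrades the reformulation to an exact equivalence, which yields the claim.

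The main obstacle I anticipate is bookkeeping rather than conceptual: I must verify carefully that the $t=1$ piece collapses to $s_i\ge 0$ and that the single $\rho$-rescaling simultaneously reconciles every block of the copositive matrix \emph{and} the objective. A secondary point deserving care is the minimax interchange used to cast worst-case CVaR into the $\inf_\theta\sup_\PP$ form of \eqref{eq:WCE_RA}, which relies on the convexity and weak compactness of the Wasserstein ball; both hold in the present setting.
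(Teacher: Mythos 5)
Your proposal is correct and follows essentially the same route as the paper, whose proof is a one-line appeal to Corollary~\ref{prop:CP_RA} together with the observation that the newsvendor problem has sufficiently expensive and complete recourse; you simply carry out the specialization (identifying $U(y)=\tfrac{1}{\rho}\max\{y,0\}$ with $T=2$, $\alpha_1=\beta_1=\beta_2=0$, $\alpha_2=1/\rho$, collapsing the $t=1$ block to $s_i\geq 0$, and rescaling by $\rho$) explicitly. The details you supply, including the minimax interchange needed to cast the worst-case CVaR in the $\inf_\theta\sup_\PP$ form of \eqref{eq:WCE_RA}, all check out.
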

    \begin{proof}
{\color{black} The claim follows immediately from Corollary~\ref{prop:CP_RA} and the observation that \eqref{eq:Newsvendor_DRO} has both sufficiently expensive as well as complete recourse.}
    \end{proof}
    As a second possibility, we can use the $I$ demand samples to estimate the sample mean $\hat{\bm{\mu}}=\frac{1}{I}\sum_{i\in[I]}\xih_i$ and the sample covariance matrix $\hat{\bm{\Sigma}}=\frac{1}{I}\sum_{i\in[I]}(\xih_i-\hat{\bm{\mu}})(\xih_i-\hat{\bm{\mu}})^\top$ of $\xit$, which can in turn be used to construct a Chebyshev ambiguity set of the form
         \begin{equation}
     	\label{eq:ambiguity_Chebyshev}
     	\mathcal P(\hat{\bm{\mu}}, \hat{\bm{\Sigma}}, \gamma_1,\gamma_2)=\left\{\PP\in\mathcal M^2(\RR_+^K):\begin{array}{l}(\EE_\PP[\xit]-\hat{\bm\mu})^\top\hat{\bm\Sigma}^{-1}(\EE_\PP[\xit]-\hat{\bm\mu})\leq\gamma_1\\ \EE_\PP[(\xit-\hat{\bm\mu})(\xit-\hat{\bm\mu})^\top]\preceq(1+\gamma_2)\hat{\bm\Sigma}\end{array}\right\},
     \end{equation}
   where $\gamma_1,\gamma_2\in\mathbb R_+$ represent two confidence parameters. This ambiguity set has been proposed in \cite{DY10:DRO}. 
    \begin{prop}
    	If $\Phat=\mathcal P(\hat{\bm{\mu}}, \hat{\bm{\Sigma}}, \gamma_1,\gamma_2)$, then \eqref{eq:Newsvendor_DRO} is equivalent to the copositive program 	
   		\begin{equation}
   			\label{eq:Chebyshev_DRO}
   			\begin{array}{llll}
   				&\hspace{-6mm}\displaystyle\minimize{} & \displaystyle\theta+\frac{1}{\rho}\left[s+\tr\left((\gamma_2\hat{\bm\Sigma}+\hat{\bm\mu}\hat{\bm\mu}^\top)\bm M\right)+\hat{\bm\mu}^\top\bm m+\sqrt{\gamma_1}\|\hat{\bm\Sigma}^{\frac{1}{2}}(\bm m+2\bm M\hat{\bm\mu})\|_2\right]\\
   				&\hspace{-6mm}\subjectto &\displaystyle\bm x\in\RR_+^K,\;\theta, s\in\RR,\;\bm{m},\bm\psi,\bm\phi\in\mathbb R^K,\;\bm M\in\mathbb S_+^K\\
   				&&\displaystyle \mathbf e^\top\bm x\leq B\\[1mm]
   				&&\displaystyle\hspace{-1mm}\begin{bmatrix}
   					\displaystyle\bm M&\displaystyle-\frac{1}{2}\bm T(\bm x)^\top&\displaystyle\frac{1}{2}\bm m\\
   					\displaystyle-\frac{1}{2}\bm T(\bm x)&\displaystyle \bm W\diag(\bm\phi)\bm W^\top&\displaystyle \frac{1}{2}(\bm W\bm\psi-\bm h(\bm x))\\[2mm]\displaystyle
   					\frac{1}{2}\bm{m}^\top &\displaystyle\frac{1}{2}(\bm W\bm\psi-\bm h(\bm x))^\top& \displaystyle s+\theta-\mathbf e^\top(\bm{\psi}+\bm\phi)
   				\end{bmatrix}\succeq_{\mathcal C}\bm 0,\;\begin{bmatrix}
   				\bm M & \frac{1}{2}\bm m\\
   				\frac{1}{2}\bm m^\top & s
   			\end{bmatrix}\succeq_{\mathcal C}\bm 0,
   		\end{array}
   	\end{equation}
   	    where $\bm T(\bm x)=[-\diag(\bm b)\;\diag(\bm s)]^\top$, $\bm h(\bm x)=[\bm x^\top\diag(\bm b)$, $-\bm x^\top\diag(\bm s)]^\top$, and $\bm W=[\mathbb I\;\;\mathbb I]^\top$.
    \end{prop}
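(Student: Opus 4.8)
The plan is to mirror the derivation of the copositive upper bound in Theorem~\ref{thm:CP} and its risk-averse extension in Corollary~\ref{prop:CP_RA}, but to replace the Wasserstein dualization of the inner worst-case expectation by the moment dualization of Delage and Ye~\cite{DY10:DRO}. First I would recognize the worst-case CVaR as an instance of the worst-case optimized certainty equivalent~\eqref{eq:WCE_RA} corresponding to the piecewise affine disutility $U(y)=\max\{y,0\}$, i.e.\ with $T=2$, $(\alpha_1,\beta_1)=(0,0)$ and $(\alpha_2,\beta_2)=(1,0)$. Writing out $\PP$-CVaR and interchanging the infimum over the CVaR threshold $\theta$ with the supremum over $\PP\in\Phat$---which is justified by a classical minimax argument since the objective is convex in $\theta$, affine (hence concave) in $\PP$, and $\theta$ may be restricted to a compact interval without loss---reduces problem~\eqref{eq:Newsvendor_DRO} to
\[
\inf_{\theta\in\RR}\theta+\frac{1}{\rho}\sup_{\PP\in\mathcal P(\hat{\bm\mu},\hat{\bm\Sigma},\gamma_1,\gamma_2)}\EE_\PP\big[\max\{Z(\bm x,\xit)-\theta,0\}\big].
\]

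Next I would dualize the inner worst-case expectation over the Chebyshev ambiguity set~\eqref{eq:ambiguity_Chebyshev}. Introducing a positive semidefinite multiplier $\bm M$ for the second-moment constraint, a scalar $s$ for the normalization $\int\textup{d}\PP=1$, and handling the ellipsoidal first-moment constraint through its second-order cone dual, the strong duality result of~\cite{DY10:DRO} (applicable because $\hat{\bm\Sigma}\succ\bm0$ and $\gamma_1,\gamma_2>0$ guarantee a Slater point) recasts the worst-case expectation as the minimization of
\[
s+\tr\big((\gamma_2\hat{\bm\Sigma}+\hat{\bm\mu}\hat{\bm\mu}^\top)\bm M\big)+\hat{\bm\mu}^\top\bm m+\sqrt{\gamma_1}\big\|\hat{\bm\Sigma}^{1/2}(\bm m+2\bm M\hat{\bm\mu})\big\|_2
\]
over $\bm M\succeq\bm0$, $\bm m$ and $s$, subject to the quadratic domination constraint $\bm\xi^\top\bm M\bm\xi+\bm m^\top\bm\xi+s\geq\max\{Z(\bm x,\bm\xi)-\theta,0\}$ for all $\bm\xi\in\RR_+^K$. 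The cross terms between the first- and second-moment duals are exactly what produce the argument $\bm m+2\bm M\hat{\bm\mu}$ of the norm; I would track these constants carefully but regard the bookkeeping as routine.

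The remaining step converts the two branches of the domination constraint into the two copositive constraints of~\eqref{eq:Chebyshev_DRO}. The zero branch $\bm\xi^\top\bm M\bm\xi+\bm m^\top\bm\xi+s\geq0$ for all $\bm\xi\geq\bm0$ is, after homogenization in $[\bm\xi^\top\;1]^\top$, exactly the copositivity of $\left[\begin{smallmatrix}\bm M&\frac12\bm m\\\frac12\bm m^\top&s\end{smallmatrix}\right]$ by Lemma~\ref{lem:copositive_semiinf_equiv}, which is the second constraint in~\eqref{eq:Chebyshev_DRO}. For the $Z-\theta$ branch I would invoke strong linear programming duality (valid because the newsvendor recourse is sufficiently expensive) to write $Z(\bm x,\bm\xi)=\sup\{(\bm T(\bm x)\bm\xi+\bm h(\bm x))^\top\bm p:\bm p\geq\bm0,\ \bm W^\top\bm p=\bm q\}$ with $\bm q=\mathbf e$; the constraint then becomes a robust bilinear inequality in $(\bm\xi,\bm p)\geq\bm0$. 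Proceeding exactly as in the proof of Theorem~\ref{thm:CP}---appending the redundant quadratic identities $q_j^2=(\bm W_{:j}^\top\bm p)^2$ (here $\bm Q=\bm0$ and there are no support inequalities, so $\rb=\bm q=\mathbf e$), dualizing the linear equality with multiplier $\bm\psi$ and the quadratic ones with $\bm\phi$, homogenizing in $[\bm\xi^\top\;\bm p^\top\;1]^\top$, and applying Lemma~\ref{lem:copositive_semiinf_equiv}---yields precisely the large copositive constraint in~\eqref{eq:Chebyshev_DRO}, with the constants $\mathbf e^\top\bm\psi$ and $\mathbf e^\top\bm\phi$ absorbed into the bottom-right entry $s+\theta-\mathbf e^\top(\bm\psi+\bm\phi)$.

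The main obstacle is establishing exactness rather than a mere one-sided bound. The copositive dualizations above only guarantee that~\eqref{eq:Chebyshev_DRO} bounds~\eqref{eq:Newsvendor_DRO} from above, and Example~\ref{ex:infinite_gap} shows such gaps can be strictly positive. To close the gap I would rely on the fact---already verified in the text---that the newsvendor recourse enjoys both sufficiently expensive and complete recourse: complete recourse supplies a Slater point for the copositive program exactly as in the proof of Theorem~\ref{prop:complete_recourse} (via Lemmas~\ref{lem:complete_recourse_WW} and~\ref{lem:M_copositive_interior} applied to $\bm W\bm W^\top$), so strong conic duality holds, while an adaptation of the completely positive construction in Theorem~\ref{thm:exact_CPP}---mapping every distribution feasible for the moment problem to a completely positive moment matrix and back---shows the completely positive dual attains the true worst-case value. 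Verifying that this construction goes through with the moment constraints of~\eqref{eq:ambiguity_Chebyshev} in place of the Wasserstein budget, and that the Slater argument survives the additional second-order cone term, is where the real work lies.
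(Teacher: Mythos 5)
Your proposal is correct and follows essentially the same route as the paper's (very terse) proof: the minimax interchange in $\theta$ is the paper's appeal to Sion's theorem, your Delage--Ye dualization of the moment constraints followed by the Lagrangian/Lemma~\ref{lem:copositive_semiinf_equiv} copositification is the conic dual of the completely positive reformulation the paper imports from \cite[Section~4.4]{NRZ11:mixed01}, and your closing exactness argument (Slater point from complete recourse plus the completely-positive attainment construction) is precisely the strong conic duality step the paper invokes. The only difference is order of presentation---you build the copositive program as an upper bound and then prove tightness, whereas the paper states the exact completely positive maximization first and then dualizes---which is the same argument run in reverse.
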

    \begin{proof}
    As in the proof of Proposition~\ref{prop:Wass-Newsvendor}, we may transform~\eqref{eq:Newsvendor_DRO} to a worst-case expected disutility minimization problem by using Sion's minimax theorem~\cite{sion1958general}. The worst-case expectation in the resulting objective function can then be re-expressed as a maximization problem over completely positive cones by leveraging ideas from~\cite[Section~4.4]{NRZ11:mixed01}. Finally, problem~\eqref{eq:Chebyshev_DRO} is obtained via strong conic duality, which allows us to convert the completely positive maximization problem to an equivalent copositive minimization problem.
    \end{proof}
 Distributionally robust multi-item newsvendor problems with Chebyshev ambiguity sets are known to be NP-hard even if $\gamma_1=\gamma_2=0$ and $\xit$ is supported on $\mathbb R^K$, but they admit tractable conservative approximations based on quadratic decision rules \cite{HKWZ2015:newsvendor}. 

A third possibility is to set $\Phat=\{\PPhat_I\}$. This singleton ambiguity set corresponds to a Wasserstein ball around the empirical distribution with radius $\epsilon=0$. Problem~\eqref{eq:Newsvendor_DRO} then simply reduces to the corresponding sample average approximation (SAA) problem, which is equivalent to a tractable linear program.

In order to assess the performance of the Wasserstein, Chebyshev and SAA policies obtained from the respective distributionally robust optimization models, we conduct out-of-sample experiments for the $K$-item newsvendor problem with $K=3$ and training datasets containing $I=10,$ $20,$ $40,$ $80,$ $160,$ $320$, $640$ and $1{,}028$ independent samples. In all experiments, we replace each copositive cone $\mathcal C$ appearing in~\eqref{eq:CP_CVaR} and~\eqref{eq:Chebyshev_DRO} with its first inner approximation~$\mathcal C^0$.  We fix the vectors of holding and stock-out costs to $\bm b=\mathbf e$ and $\bm s=10\mathbf e$, respectively, and we set the ordering budget to $B=30$.  We further fix the risk level of the CVaR to $\rho=10\%$. 

The results of all experiments are averaged over $100$ random trials  generated in the following manner. The true demand distribution $\PP^\star$ of $\xit$ is assumed to be lognormal, that is, $\tilde{\xi}_k=\exp(\tilde{\chi}_k)$, $k\in[K]$, where the~$\tilde{\chi}_k$, $k\in[K]$, represent jointly normally distributed random variables with first- and second-order moments given by $\bm{\nu}\in\RR_+^K$ and $\bm\Sigma\in\mathbb S_+^K$, respectively. In each trial, we sample $\bm\nu$ uniformly at random from $[0,2]^K$ while the matrix $\bm{\Sigma}$ is generated randomly using the following procedure. We set the vector of standard deviations to $\bm{\sigma}=1/4\mathbf e$, sample a random correlation matrix $\bm C\in\mathbb S_+^K$ using the MATLAB command~`{\texttt{gallery(`randcorr',3)}}', and set $\bm{\Sigma}=\diag(\bm\sigma)\bm C\diag(\bm\sigma)+\bm{\nu}\bm{\nu}^\top$. 
Next, we sample $I$ independent training samples $\{\xih_i\}_{i\in[I]}$ from $\PP^\star$. We then compute the Wasserstein policy ${\bm x}^\star_{\text{Wass}}$ by solving \eqref{eq:CP_CVaR} with $\mathcal C^0$ instead of $\mathcal C$ and where the Wasserstein radius $\epsilon$ is chosen by 5-fold cross-validation so as to minimize the out-of-sample risk \cite{efron1994introduction, friedman2001elements}. Similarly, the Chebhyshev policy ${\bm x}^\star_{\text{Cheb}}$ is obtained by solving \eqref{eq:Chebyshev_DRO} with $\mathcal C^0$ instead of $\mathcal C$ and where the confidence parameters are again determined via 5-fold cross-validation. Finally, we compute the SAA policy  ${\bm x}^\star_{\text{SAA}}$ by solving  \eqref{eq:Newsvendor_DRO} with $\Phat=\{\PPhat_I\}$. The out-of-sample risk $\PP^\star\text{-CVaR}_\rho [Z(\bm x^\star,\xit)]$ of each of the three data-driven strategies  ${\bm x}^\star_{\text{Wass}}$,  ${\bm x}^\star_{\text{Cheb}}$  and ${\bm x}^\star_{\text{SAA}}$ is then estimated at high accuracy using 20,000 test samples from $\PP^\star$.

Figure \ref{fig:Newsvendor_results_1} visualizes the out-of-sample risk of the Wasserstein and Chebyshev policies relative to the SAA policy as a function of the training sample size $I$. Observe that the Wasserstein policy dominates the SAA policy with high confidence uniformly across all sample sizes. Moreover, for training datasets of size $I\leq 20$, both the Wasserstein and Chebyshev policies outperform the SAA policy with high confidence by more than $20\%$.  This suggests that the distributionally robust policies are preferable whenever there is significant ambiguity about the true distribution $\PP^\star$. While the Wasserstein policy consistently outperforms the SAA policy, the quality of  the Chebyshev policy starts to deteriorate for $I\geq 30$.
\begin{figure}[h!]
	\centering
	\includegraphics[width=0.35\textwidth]{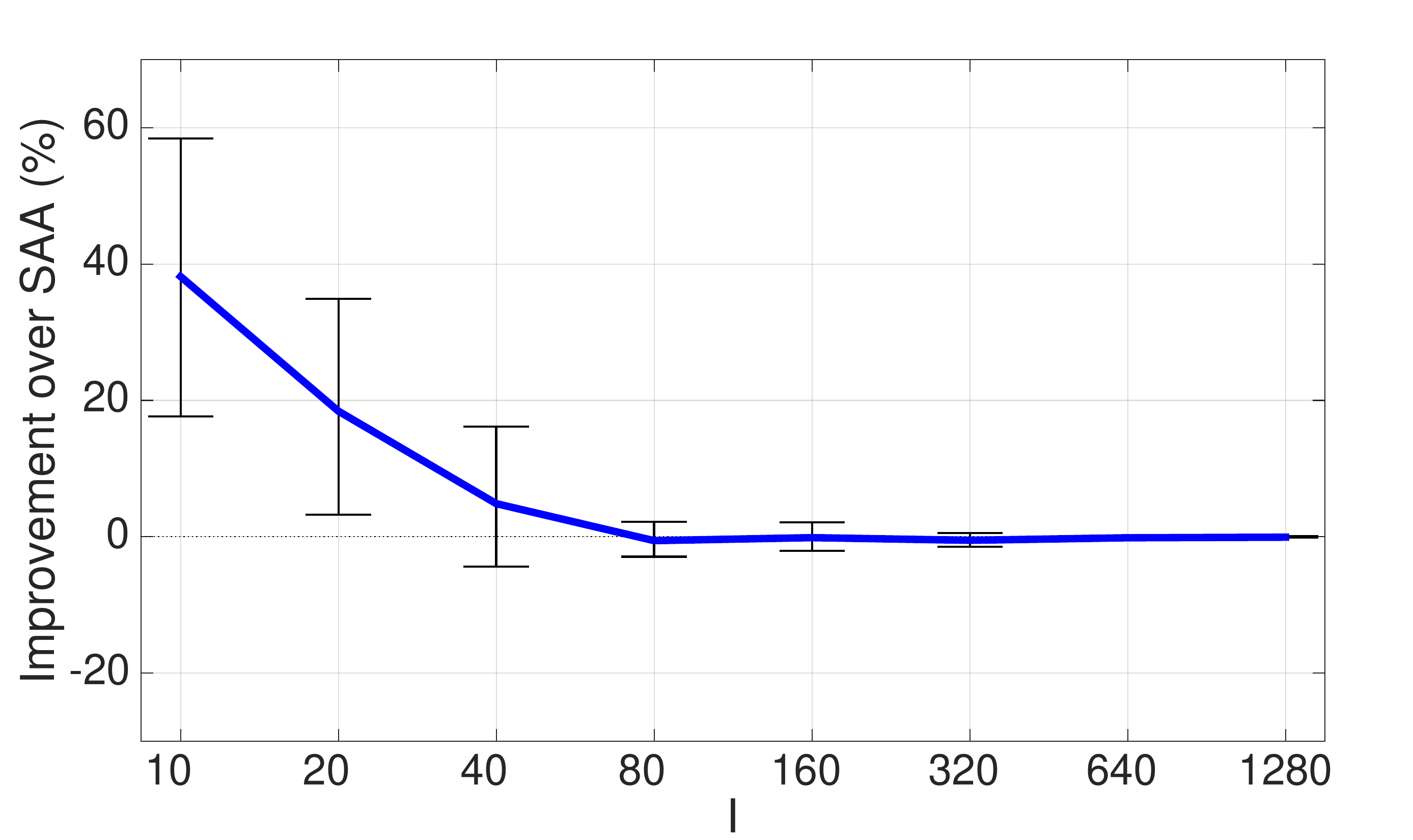} 
	\includegraphics[width=0.35\textwidth]{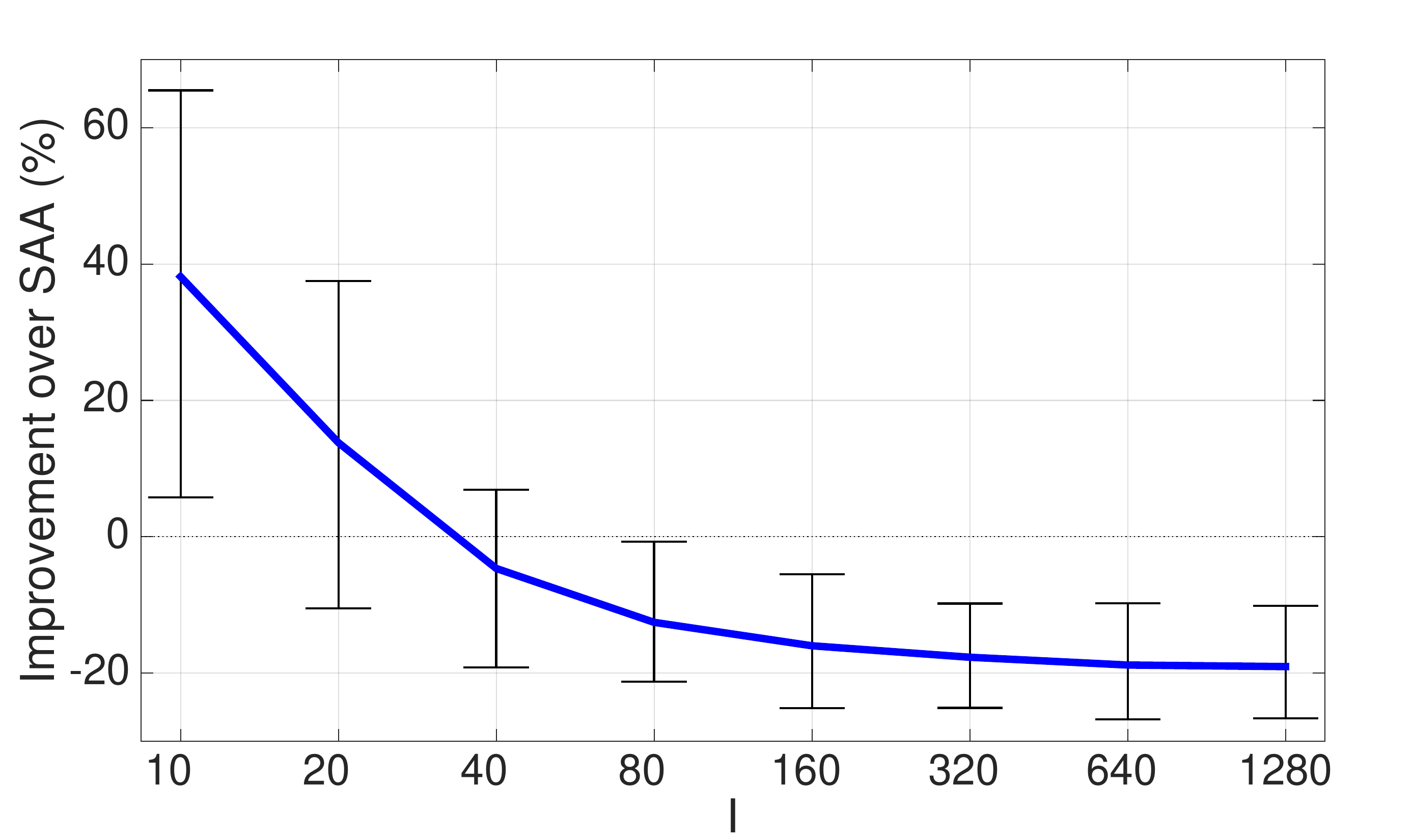} 
	\caption{Improvement of the Wasserstein (left) and Chebyshev (right) policy relative to the SAA policy in terms of out-of-sample CVaR. The solid blue lines represent the mean, and the error bars visualize the $20\%$ and $80\%$ quantiles of the relative improvement, respectively.}
	\label{fig:Newsvendor_results_1}
\end{figure}

Figure \ref{fig:Newsvendor_results_2} depicts the optimality gaps of the three policies with respect to the true optimal policy, which we estimate by solving another SAA problem using 20,000 samples from $\PP^\star$. For $I=10$, we find that the SAA policy is $\sim$$70\%$ suboptimal, while both  the Chebyshev and  Wasserstein policies are only $\sim$$25\%$ suboptimal on average. For $I= 20$, the SAA policy remains $\sim$$40\%$ suboptimal while the Wassertein policy exhibits a marginally better suboptimality of $\sim $$20\%$ on average. The  Chebyshev policy, on the other hand, reaches a steady state already for $I=10$ with an average suboptimality of about $25\%$. This suggests that the empirical estimates of the first- and second-order moments are already accurate enough for small training datasets. Unfortunately, as the sample size grows, the Chebyshev policy cannot improve as the first two moments are insufficient to describe the entire shape of the true distribution~$\PP^\star$. The Wasserstein policy, on the other hand, strikes a good balance between robustness and asymptotic consistency. In particular, we find that it converges quickly to the true optimal policy as the size of the training dataset grows.
\begin{figure}[h!]
	\centering
	\includegraphics[width=0.329\textwidth]{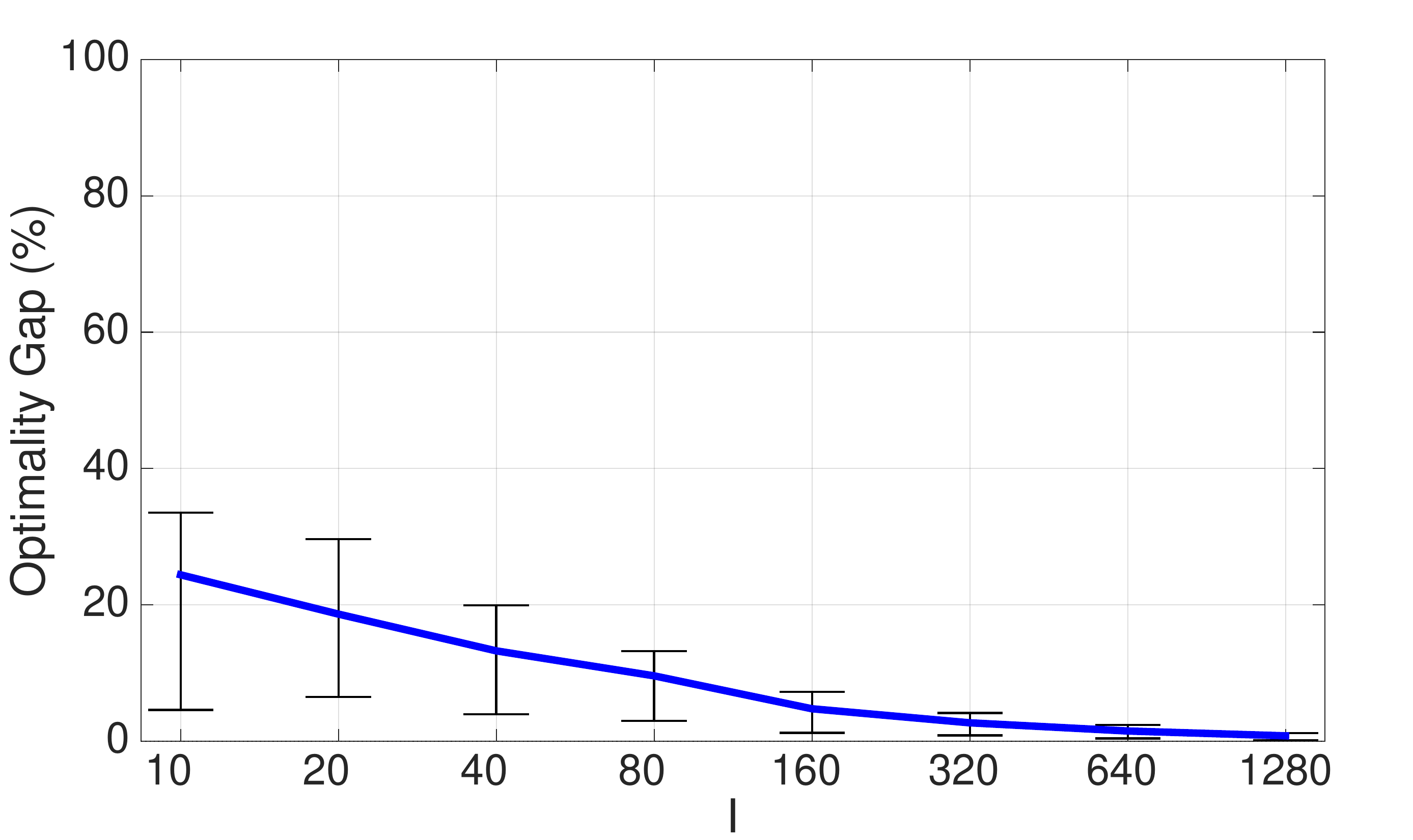} 
	\includegraphics[width=0.329\textwidth]{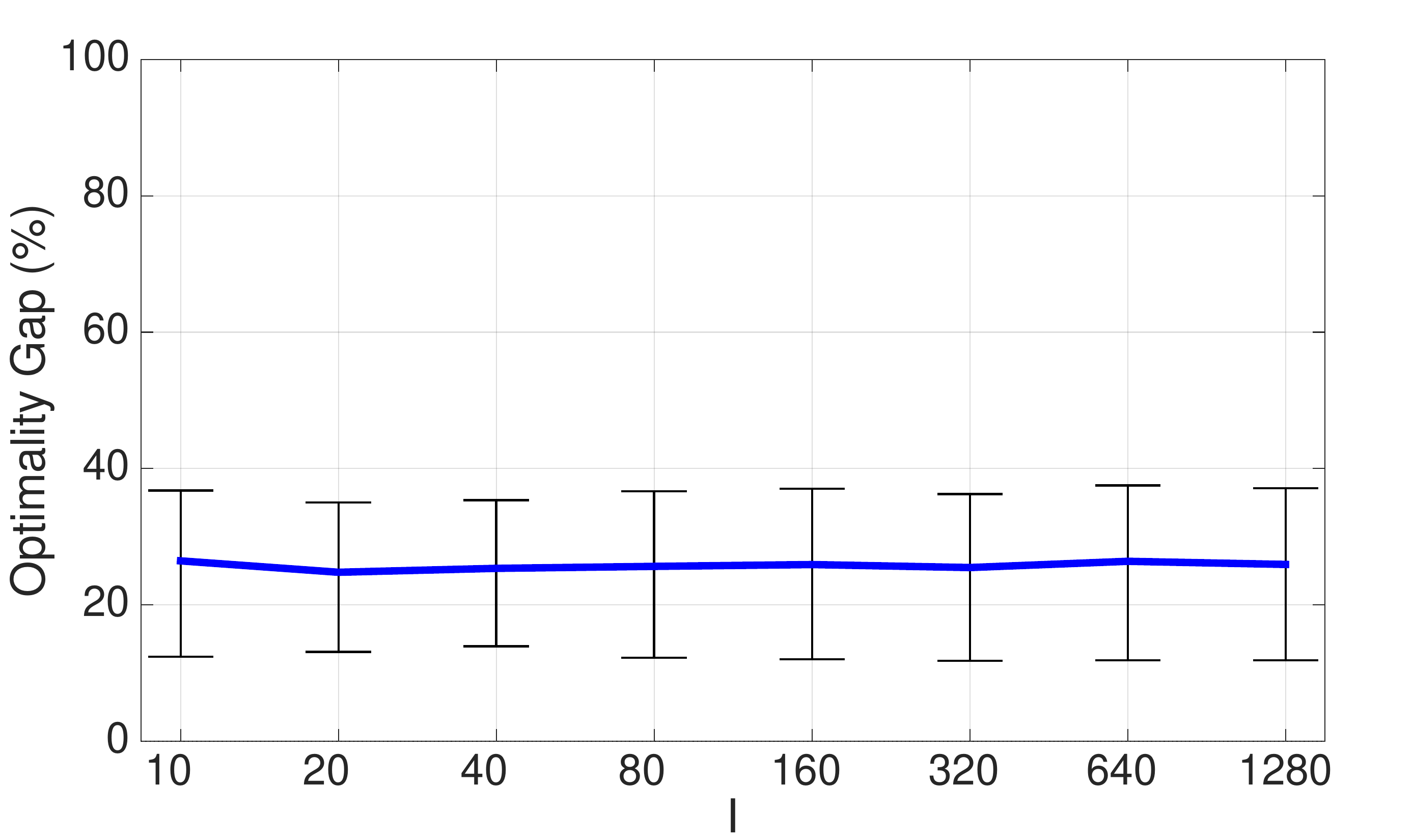} 
	\includegraphics[width=0.329\textwidth]{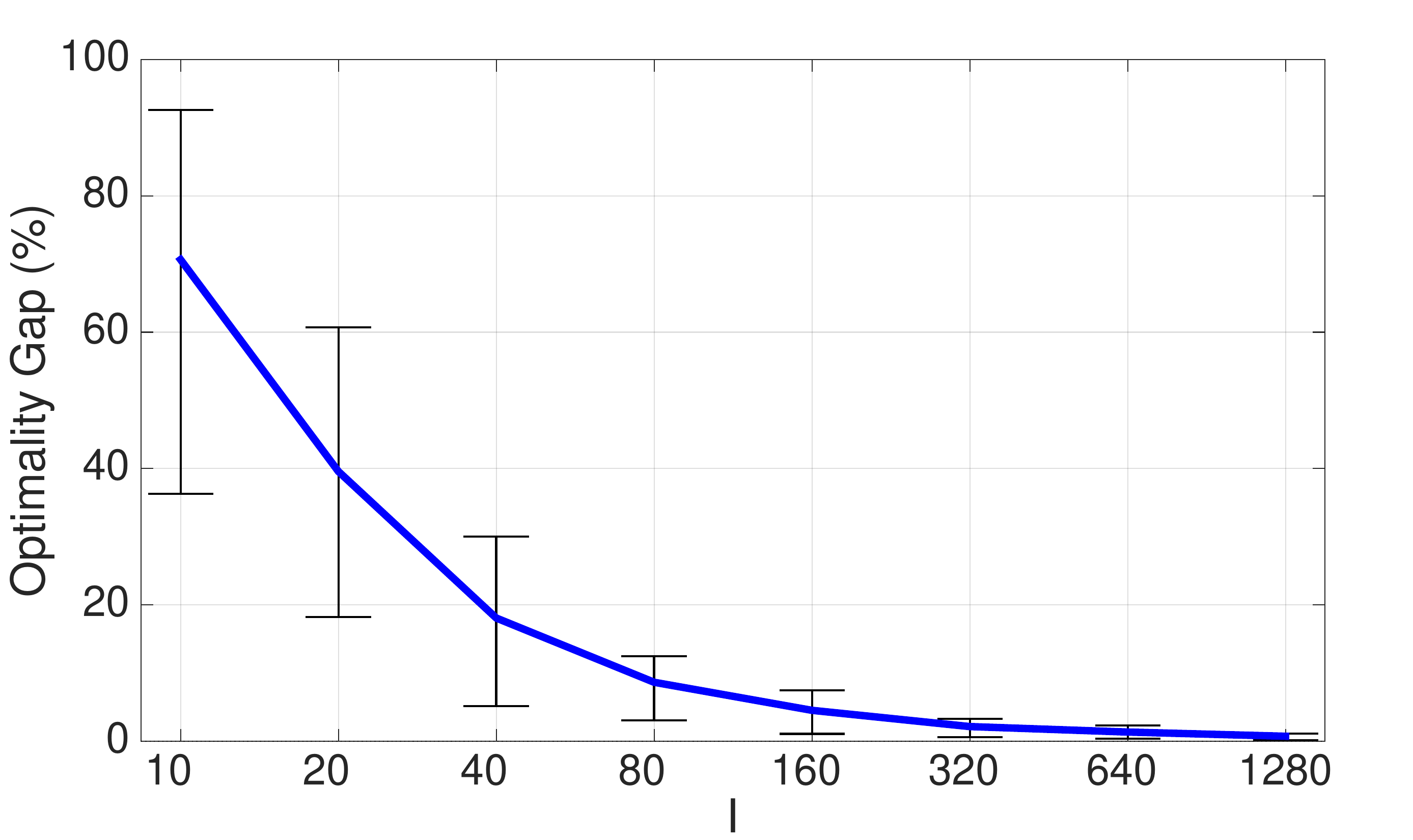} 
	\caption{Optimality gaps of the Wasserstein {(left)}, Chebyshev {(middle)}, and SAA {(right)} policies. The solid blue lines represent the mean, and the error bars visualize the $20\%$ and $80\%$ quantiles of the optimality gaps, respectively.}
	\label{fig:Newsvendor_results_2}
\end{figure}

\paragraph{Acknowledgements} \hspace{2mm}\\
The authors gratefully acknowledge the two anonymous reviewers whose comments led to substantial improvements of the paper. 
This research was supported by the Swiss National Science Foundation grant
 BSCGI0\underline{~}157733.
\\


\bibliographystyle{plain}
\bibliography{bibliography}

%

\newpage
\appendix
\section{E-Companion: Proof of Theorem~\ref{thm:moment_problem}}

	\begin{proof}[Proof of Theorem~\ref{thm:moment_problem}]
		By the definition of the Wasserstein metric, the worst-case expected wait-and-see cost over the ambiguity set $\mathcal B_\epsilon^{r}(\hat{\mathbb P}_I)$ can be expressed as
		\begin{align}
		\label{eq:original_moment}
		\hspace{-10mm}\qquad\qquad\qquad\; \mathcal Z(\bm x)&=&\sup_{\mathbb P\in\mathcal M^r(\Xi)}&\left\{\mathbb E_{\mathbb P}\left[Z(\bm x,{\xit})\right]~:~W^r(\mathbb P,\hat{\mathbb P}_I)\leq \epsilon\right\}  \\\nonumber
		&=&\sup\;\;\;\;&\int_{\Xi}Z(\bm x,{\bm\xi})\;\;\PP(\mathrm d\bm\xi)\\\nonumber
		&&\st\;\;\;\;&\mathbb P\in\mathcal M^r(\Xi),\;\Pi\in\mathcal M^r(\Xi\times\Xi)\\\nonumber
		&&&\int_{\Xi\times\Xi}{d}(\bm\xi,{\bm\xi}')^r\;\Pi(\mathrm d\bm\xi,\mathrm d\bm\xi')\leq\epsilon^r\\\nonumber
		&&&\displaystyle\text{$\Pi$ is a joint distribution of $\xit$ and $\xit'$}
		\displaystyle\text{ with marginals $\mathbb P$ and $\PPhat_I$, respectively}.\nonumber
		\end{align}
		By the law of total probability we can decompose the transportation plan as $\Pi=\frac{1}{I}\sum_{i\in[I]}\PP_i$, where $\PP_i$ represents the distribution of $\xit$ conditional on $\xit'=\xih_i$. 
		Thus, $\mathcal Z(\bm x)$ coincides with the optimal value of the generalized moment problem \eqref{eq:primal_moment}. This establishes the first claim.
%

		The second claim follows from the observation that the semi-infinite linear program~\eqref{eq:dual_semiinf} is dual to the generalized moment problem \eqref{eq:primal_moment}. 
		Indeed, if $Z(\bm x,\bm\xi)$ is finite for all $\bm\xi\in\Xi$, then strong duality between~\eqref{eq:primal_moment} and~\eqref{eq:dual_semiinf} holds for all $\epsilon>0$ due to a straightforward generalization of \cite[Proposition 3.4]{shapiro01:conic_duality} (see also \cite[Lemma 7]{HRKW16:AJCC}). If there exists $\overline{\bm\xi}\in\Xi$ with $Z(\bm x,\overline{\bm\xi})=\infty$, on the other hand, then the dual problem \eqref{eq:dual_semiinf} is infeasible as all its inner maximization problems are unbounded. In this case, however, the primal problem \eqref{eq:primal_moment} is unbounded. Indeed, since $W^r(\hat{\PP}_I,\hat{\PP}_I)=0$, the continuity of the reference metric $d$ implies that the mixture distribution $\PP^\star=(1-\tau)\hat\PP_I+\tau\delta_{\overline{\bm\xi}}$ is feasible in \eqref{eq:original_moment} for some $\tau\in(0,1]$, which implies that $\mathcal Z(\bm x) \geq \EE_{\PP^\star}[Z(\bm x,\tilde{\bm\xi})]=\infty$. Thus, the optimal value of the dual problem \eqref{eq:dual_semiinf} always coincides with $\mathcal Z(\bm x)$. This completes the proof.
	\end{proof}








\end{document}